\documentclass[final]{siamart251216}
\usepackage[english]{babel}
\usepackage[utf8]{inputenc}
\usepackage[T1]{fontenc}
\usepackage{lipsum}
\usepackage{amsfonts, amsmath, amssymb, mathtools}
\usepackage{graphicx}
\usepackage{cleveref}
\usepackage{enumerate}
\usepackage{epstopdf}
\usepackage{algorithmic}
\ifpdf
    \DeclareGraphicsExtensions{.eps,.pdf,.png,.jpg}
\else
    \DeclareGraphicsExtensions{.eps}
\fi
\usepackage{amsopn}
\DeclareMathOperator{\diag}{diag}
\usepackage{booktabs}

\newcommand{\TheTitle}{Optimal transfer operators for nonsymmetric two-grid methods}

\newcommand{\TheShortTitle}{Optimal transfer operators}

\allowdisplaybreaks

\DeclareMathOperator{\C}{\mathbb{C}}
\DeclareMathOperator{\N}{\mathbb{N}}

\DeclareMathOperator{\calR}{\mathcal{R}}
\DeclareMathOperator{\calN}{\mathcal{N}}

\newcommand{\abs}[1]{\ensuremath{\left\vert#1\right\vert}}
\newcommand{\m}[1]{\begin{bmatrix}#1\end{bmatrix}}
\newcommand{\Cnn}{\C^{n \times n}}
\newcommand{\Cnnc}{\C^{n \times n_c}}
\newcommand{\Cncnc}{\C^{n_c \times n_c}}
\newcommand{\Cn}{\C^n}

\newcommand{\Eplus}{E_+^{\nu_1, \nu_2}}
\newcommand{\Esouth}{E^{\nu_1, \nu_2}}
\newcommand{\tildeMinvB}{\widetilde{M}^{-1}B}
\newcommand{\hatMinvB}{\widehat{M}^{-1}B}
\newcommand{\lambdaquer}{\overline{\lambda}}

\DeclarePairedDelimiter{\norm}{\lVert}{\rVert}
\def\<{\langle}
\def\>{\rangle}
\def\{{\lbrace}
\def\}{\rbrace}

\newsiamremark{remark}{Remark}
\newsiamremark{example}{Example}

\author{Reinhard Nabben\thanks{Institute of Mathematics, Technische Universit\"at Berlin, (\email{nabben@math.tu-berlin.de}).} 
\and Ludwig Rooch\thanks{Institute of Mathematics, Technische Universit\"at Berlin, (\email{rooch@math.tu-berlin.de}).}}
\title{{\TheTitle}\thanks{Submitted to the editors DATE.}}
\headers{\TheShortTitle}{Reinhard Nabben, Ludwig Rooch}
\ifpdf
    \hypersetup{pdftitle={\TheTitle}, pdfauthor={Reinhard Nabben, Ludwig Rooch}}
\fi

\begin{document}

\maketitle

\begin{abstract}
  Algebraic Multigrid (AMG) methods have been proven to be effective solvers for large-scale linear algebraic systems $Ax = b$ with Hermitian positive definite (HPD) matrix $A$. For such problems the convergence in the $A$-norm is well understood, but for nonsymmetric indefinite systems fewer results exist. Recently, convergence results for more general $B$-norms induced by certain HPD matrices were established. There, orthogonal projections built by compatible transfer operators are used.
  
  Here, we present a theoretical framework for the convergence of nonsymmetric algebraic two-grid methods for arbitrary $B$-inner products and induced $B$-norms which naturally includes the HPD case and all recent results for the nonsymmetric case. For this purpose, we consider two different two-grid error operators with the first one being the natural generalization of the error operator in the HPD case. The second operator has been studied before and is simpler, but requires the additional assumption of normality in some inner product of the smoothing step $M^{-1}A$ to achieve convergence. We prove new convergence results, generalize some previous results and explain the differences and similarities of both operators together with the necessity of the normality. Moreover, we establish optimal compatible interpolation and restriction operators for both two-grid methods that minimize the error norm.
\end{abstract}

\begin{keywords}
  algebraic multigrid; nonsymmetric; indefinite; two-grid methods; convergence analysis; compatible transfer operators; $B$-normal matrices
\end{keywords}

\begin{MSCcodes}
    65F08, 65F10, 65F15, 65F35, 65F50
\end{MSCcodes}

\section{Introduction}\label{sec:introduction}

We consider large-scale linear algebraic systems $Ax =b$ with nonsingular, nonsymmetric and indefinite system matrix $A$. For symmetric (Hermitian) positive definite (SPD or HPD) $A$, algebraic multigrid (AMG) methods have established themselves as efficient solvers and preconditioners and many theoretical results on the convergence, spectral properties and optimal transfer operators have been proven \cite{FalVas2004, FalVasZik2005, MacOls2014, Not2015, Vas2008}. There are several nonsymmetric and indefinite problems for which AMG methods have been developed and studied \cite{Lot2017, ManRugSou2018, ManMunRugSou2019, WieTumWalGee2014, BreManMcCRugSan2010} or spectral analysis has been done \cite{Not2010, MenNab2008a, MenNab2008b, Not2020, GarKehNab2020}. In \cite{Xu2022, BatNab2025, AliBraKahKrzSchSou2025, SouMan2024} optimal compatible transfer operators leading to orthogonal projections for nonsymmetric matrices were considered. 

However, a general framework for rigorous norm-based analysis remains to be found. For the simplified error propagation operator with coarse-grid correction and only one pre- or post-smoothing step, results for certain norm choices have been proven. These works include \cite{ManSou2019,BreManMcCRugSan2010,ManOlsSchSou2017} for the $\sqrt{A^HA}$-norm, \cite{Xu2022} for the $M$-norm (where $M^{-1}$ is the smoother) and \cite{BatNab2025} for arbitrary norms. In this article, we introduce a general framework for nonsymmetric algebraic two-grid methods that allows for the analysis of norm-based convergence in any norm given by some HPD matrix $B$. We consider two different error propagation operators, both with an arbitrary number of pre- and post-smoothing steps. The first operator is the natural generalization of the error operator in the HPD case. The second operator has a simpler structure and is easier to apply because it uses the same matrix as pre- and post-smoother. It was analyzed in \cite{KrzSouWimAliBraKah2025} where norm-based convergence results have been proven for a special class of norms. These norms are based on a diagonalization of $M^{-1}A$ which was also used to construct the optimal transfer operators. The diagonalizability of $M^{-1}A$ was first introduced in \cite{AliBraKahKrzSchSou2025} and is a new assumption in the convergence theory that has no equivalent in the existing HPD theory. This assumption actually refers to the concept of $B$-normal matrices which are the generalization of normal matrices in arbitrary $B$-inner products. We combine this with the existing idea of $B$-orthogonal projections \cite{BatNab2025, SouMan2024} to obtain our results. For both of our error operators we state the optimal compatible transfer operators. We discuss the differences of both operators and show that the structure of the first operator allows us to obtain the same convergence results as for the second operator but without any additional assumption on the smoothing matrix $M^{-1}A$. Our framework not only generalizes all existing results for the HPD theory, it also contains all herein mentioned results for nonsymmetric AMG.

In \cref{sec:proj and B-normal matrices} we recall some properties of projections and the concept of $B$-normal matrices. This is followed by the introduction of our two-grid setting in \cref{sec:B-normal in multigrid}. There, we define the two error propagation operators which we consider, discuss the $B$-orthogonality of the coarse-grid correction and introduce the symmetrized smoother matrices. Here, we already use the assumption of $B$-normality to achieve further results. Finally, in \cref{sec:two-grid conv} we prove the convergence of both error propagation matrices, state compatible transfer operators and show the optimality of these transfer operators. In a detailed discussion, we compare the two error operators and their respective assumptions for convergence. This is emphasized by some small theoretical  examples that demonstrate the differences of the two error operators.

Our theoretical framework is continued in \cite{NabRoo2026b}. There we give sharp bounds for both error  operators using $B$-orthogonal projections. We analyze the effect of the number of coarse variables on the norm of the error propagation matrices. Moreover, we extend McCormick's landmark $V$-cycle bound to nonsymmetric indefinite problems.

\section{Projections and \texorpdfstring{$B$}{B}-normal matrices} \label{sec:proj and B-normal matrices}

In the classical theory for AMG, the matrix $A$ is Hermitian positive definite (HPD) and the $A$-norm is used for any norm based estimate. Now, the matrix $A$ is nonsymmetric and indefinite in the standard inner product. The idea is to consider a different inner product in which we can symmetrize certain operators and matrices. For this purpose, we require the notion of a general $B$-inner product. In the following we always consider an HPD matrix $B \in \Cnn$.
The $B$-inner product and the induced $B$-(vector-)norm are defined as 
\begin{displaymath}
    \< x,y\>_B = y^H B x  \quad \text{and} \quad \lVert x \rVert_B = \sqrt{\<x,x\>_B}.
\end{displaymath}
Two vectors $x, y \in \mathbb{C}^n$ are called $B$-orthogonal (or orthogonal in the $B$-inner product) if $ \< x, y\>_B = 0 $. We then write $x \,\bot_B \, y$. We can also define the $B$-(matrix-)norm as operator norm of the $B$-(vector-)norm. For a matrix $C \in \Cnn$ the $B$-(matrix-)norm is given by
\begin{displaymath}
    \norm{C}_B = \sup_{x\neq 0} \frac{\norm{Cx}_B}{\norm{x}_B} = \norm{B^\frac12CB^{-\frac12}}_2.
\end{displaymath}

Furthermore, we denote the range or image of the matrix $C$ by $\calR(C)$ and the kernel of $C$ by $\calN(C)$. One main ingredient for any multigrid method are projections. 

\begin{definition}
    A matrix $\Pi \in \Cnn$ is called projection if $\Pi^2 = \Pi$. 
\end{definition}

We are specifically interested in the class of $B$-orthogonal projections, see also \cite[Definition~2.3]{Gal2004}.

\begin{definition} \label{def:B-orth proj}
  Let $\mathcal{U} \subseteq \mathbb{C}^n$. The unique operator $\Pi_{\mathcal{U}, \mathcal{U}^{\bot_B}}$ with $\Pi_{\mathcal{U}, \mathcal{U}^{\bot_B}}^2 = \Pi_{\mathcal{U}, \mathcal{U}^{\bot_B}}$ and
  $\mathcal{R}(\Pi_{\mathcal{U}, \mathcal{U}^{\bot_B}}) =\mathcal{\mathcal{U}}$ and $\mathcal{N}(\Pi_{\mathcal{U}, \mathcal{U}^{\bot_B}}) = \mathcal{\mathcal{U}^{\bot_B}}$ is called the $B$-orthogonal projection onto $\mathcal{U}$ along $\mathcal{U}^{\bot_B}$.
\end{definition}

Here, $\mathcal{U}^{\bot_B}$ denotes the $B$-orthogonal complement of $\mathcal U$. 

Note that we use the term $B$-orthogonal both for vectors and for projections. However, it is always clear from the context which concept is used. 
It follows from the definition that a projection $\Pi$ is $B$-orthogonal if and only if
\begin{displaymath}
    \mathcal{R}(\Pi)^{\bot_B} = \mathcal{N}(\Pi) \quad \text{or equivalently} \quad \mathcal{R}(\Pi) \, \bot_B \, \mathcal{N}(\Pi).
\end{displaymath}

One can also prove the following lemma (see \cite[Lemma~1]{SouMan2024}).

\begin{lemma}\label{lem:B-ortho proj}
    A projection $\Pi$ is $B$-orthogonal if and only if 
    \begin{equation}\label{eq:B-ortho proj}
        \Pi = B^{-1} \Pi^H B.
    \end{equation}
\end{lemma}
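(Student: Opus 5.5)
The plan is to reduce both directions to the identity
\begin{displaymath}
    \< \Pi x, y\>_B = y^H B \Pi x = (B^{-1}\Pi^H B y)^H B x = \< x, B^{-1}\Pi^H B y\>_B ,
\end{displaymath}
which holds for all $x,y \in \Cn$ because $B$ is HPD, so $B^H = B$. This identity says that $\Pi^\ast := B^{-1}\Pi^H B$ is the adjoint of $\Pi$ in the $B$-inner product. Condition \eqref{eq:B-ortho proj} is therefore equivalent to $\Pi$ being $B$-self-adjoint, that is, $\< \Pi x, y\>_B = \< x, \Pi y\>_B$ for all $x,y$. The rest of the argument compares this self-adjointness with the characterization stated just before the lemma: a projection $\Pi$ is $B$-orthogonal if and only if $\calR(\Pi) \,\bot_B\, \calN(\Pi)$.

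\emph{Sufficiency.} Assume $\Pi = B^{-1}\Pi^H B$, and take $u = \Pi x \in \calR(\Pi)$ and $v \in \calN(\Pi)$. Then
\begin{displaymath}
    \<u,v\>_B = \<\Pi x, v\>_B = \<x, \Pi v\>_B = 0 .
\end{displaymath}
Hence $\calR(\Pi) \,\bot_B\, \calN(\Pi)$, and since $\Pi^2=\Pi$, the projection $\Pi$ is $B$-orthogonal.

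\emph{Necessity.} Assume $\Pi$ is a $B$-orthogonal projection. Since $\Pi^2=\Pi$, every $x$ splits as $x = \Pi x + (I-\Pi)x$ with $(I-\Pi)x \in \calN(\Pi)$. For arbitrary $x,y$ I will expand $\<\Pi x, y\>_B$ by splitting $y$ in the same way. The cross term $\<\Pi x,(I-\Pi)y\>_B$ vanishes by $B$-orthogonality, so $\<\Pi x, y\>_B = \<\Pi x,\Pi y\>_B$. Splitting $x$ in $\<x,\Pi y\>_B$ gives $\<x,\Pi y\>_B = \<\Pi x,\Pi y\>_B$ in the same way. Therefore $\<\Pi x,y\>_B = \<x,\Pi y\>_B$ for all $x,y$. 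Written out, this is $y^H B\Pi x = y^H \Pi^H B x$ for all $x,y$, so $B\Pi = \Pi^H B$, which is \eqref{eq:B-ortho proj} after multiplying by $B^{-1}$.

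I expect no real obstacle. The only point needing care is to use the characterization $\calR(\Pi)^{\bot_B} = \calN(\Pi)$, equivalently $\calR(\Pi)\,\bot_B\,\calN(\Pi)$, for projections, which was stated just before the lemma and follows from \cref{def:B-orth proj}. Together with the complementary decomposition $\Cn = \calR(\Pi)\oplus\calN(\Pi)$ for any projection, this lets the orthogonality condition be converted into the adjoint identity.
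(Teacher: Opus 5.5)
Your proof is correct. The paper gives no argument of its own for this lemma and only refers to \cite[Lemma~1]{SouMan2024}, so there is no step-by-step proof to match.

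The route closest to the rest of the paper (compare the proofs of \cref{thm:B-normal_charact} and \cref{prop:B-norm lambda_max}) would use the similarity $X \mapsto B^{\frac12}XB^{-\frac12}$. Since $x \,\bot_B\, y$ if and only if $B^{\frac12}x \perp B^{\frac12}y$, and $B^{\frac12}$ maps ranges and kernels accordingly, $\Pi$ is a $B$-orthogonal projection if and only if $\widetilde\Pi = B^{\frac12}\Pi B^{-\frac12}$ is a Euclidean orthogonal projection. That holds if and only if $\widetilde\Pi^H = \widetilde\Pi$, which unwinds to $\Pi = B^{-1}\Pi^H B$. This route is shorter, but it imports the Euclidean fact that a projection is orthogonal if and only if it is Hermitian. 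Proving that fact is exactly your range/kernel splitting argument with $B = I_n$.

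Your version is self-contained and works directly in the $B$-inner product. It also identifies the right-hand side of \cref{eq:B-ortho proj} as the $B$-adjoint $\Pi^+$, which is how the lemma is used later (e.g., \cref{lem:equiv_cond Pi B-ortho}(2)).

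One step is left implicit, in the sufficiency direction. From $\calR(\Pi) \,\bot_B\, \calN(\Pi)$ you only get $\calN(\Pi) \subseteq \calR(\Pi)^{\bot_B}$. Equality needs $\dim\calN(\Pi) = n - \dim\calR(\Pi) = \dim\calR(\Pi)^{\bot_B}$, which the direct-sum decomposition you mention supplies.
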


The following result (cf. \cite[Lemma~3]{SouMan2024} and \cite[Lemma~3.6]{Vas2008}) gives a useful characterization for the $B$-norm of a projection and its connection to $B$-orthogonality.

\begin{lemma} \label{lem:norm proj}
Let $\Pi \in \mathbb{C}^{n \times n}$ be a projection with $\Pi \neq 0$ and $\Pi \neq I_n$. Then it holds $\norm{\Pi}_B = \norm{I-\Pi}_B$ and
\begin{displaymath}
     \lVert \Pi \rVert^2_B = 1 + \sup_{x\in \mathcal{R}(\Pi)^{\bot_B}} \frac{\lVert \Pi x \rVert^2_B}{\lVert x \rVert^2_B}
\end{displaymath}
with $ \lVert \Pi \rVert^2_B = 1 $ if and only if $ \Pi $ is $B$-orthogonal.
\end{lemma}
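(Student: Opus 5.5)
The plan is to reduce everything to the Euclidean case by a similarity transformation, and then use the standard block decomposition of a projection in an orthonormal basis adapted to its range.

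\textbf{Reduction.} Set $\widehat{\Pi} = B^{1/2}\Pi B^{-1/2}$. Then $\widehat{\Pi}$ is again a projection, with $\widehat{\Pi}\neq 0$ and $\widehat{\Pi}\neq I_n$. By the formula for the $B$-norm we have $\norm{\Pi}_B = \norm{\widehat{\Pi}}_2$ and $\norm{I-\Pi}_B = \norm{I-\widehat{\Pi}}_2$. Moreover, $x \in \mathcal{R}(\Pi)^{\bot_B}$ if and only if $B^{1/2}x \in \mathcal{R}(\widehat{\Pi})^{\bot}$ (Euclidean orthogonality), and under this correspondence the quotient $\norm{\Pi x}_B/\norm{x}_B$ equals $\norm{\widehat{\Pi} y}_2/\norm{y}_2$ with $y = B^{1/2}x$. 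Finally, $\Pi$ is $B$-orthogonal if and only if $\widehat{\Pi}$ is Hermitian, by \cref{lem:B-ortho proj}. So it suffices to prove all claims for $B = I$.

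\textbf{Euclidean case.} Let $k=\dim\mathcal{R}(\Pi)$, so $0<k<n$. Choose a unitary $Q$ whose first $k$ columns span $\mathcal{R}(\Pi)$. Since $\Pi$ is the identity on its range, in this basis
\[
Q^H\Pi Q = \m{I_k & X\\ 0 & 0}
\qquad\text{and}\qquad
Q^H(I-\Pi)Q = \m{0 & -X\\ 0 & I_{n-k}}
\]
for some $X\in\C^{k\times(n-k)}$.

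\textbf{Key step: computing the norms.} We compute
\[
\Pi\Pi^H \cong \m{I+XX^H & 0\\ 0 & 0},
\qquad
(I-\Pi)^H(I-\Pi) \cong \m{0 & 0\\ 0 & I+X^HX},
\]
where $\cong$ denotes unitary similarity via $Q$. Since $XX^H$ and $X^HX$ have the same nonzero eigenvalues, both norms equal $\sqrt{1+\norm{X}_2^2}$. This proves $\norm{\Pi}_2=\norm{I-\Pi}_2$.

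\textbf{The supremum formula.} For $x\in\mathcal{R}(\Pi)^\bot$ write $Q^Hx = (0,z)$. Then $\Pi x \cong (Xz,0)$, so
\[
\sup_{x\in\mathcal{R}(\Pi)^{\bot}} \frac{\norm{\Pi x}_2^2}{\norm{x}_2^2} = \norm{X}_2^2 .
\]
This gives $\norm{\Pi}^2 = 1+\sup(\cdots)$.

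\textbf{Equality case.} We have $\norm{\Pi}=1$ if and only if $X=0$, which holds if and only if $\Pi$ is Hermitian (an orthogonal projection). Via the reduction, this is $B$-orthogonality.

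The only step needing care is the block-norm computation, in particular the equality of the nonzero spectra of $XX^H$ and $X^HX$. Beyond that, one must check in the reduction that $B$-orthogonal complements map to Euclidean complements under $B^{1/2}$. This follows from $\<x,y\>_B = (B^{1/2}y)^H(B^{1/2}x)$.
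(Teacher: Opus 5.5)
Your proof is correct and complete. It properly uses $0<k<n$ so that both diagonal blocks are nonempty, and it correctly transfers $\calR(\Pi)^{\bot_B}$ to a Euclidean complement under $B^{1/2}$.

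The paper does not prove this lemma itself. It quotes it from \cite[Lemma~3]{SouMan2024} and \cite[Lemma~3.6]{Vas2008}, so your argument is a self-contained proof rather than a variant of one in the text. It matches the paper's own style: the reduction $\widehat\Pi = B^{1/2}\Pi B^{-1/2}$ is the same device used in the proofs of \cref{thm:B-normal_charact} and \cref{prop:B-norm lambda_max}.

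Your block form $\bigl[\begin{smallmatrix} I_k & X\\ 0 & 0\end{smallmatrix}\bigr]$ lets you read all three claims off one number, $\norm{X}_2$. It equals the square root of the supremum over the complement. Both $\norm{\Pi}_B^2$ and $\norm{I-\Pi}_B^2$ equal $1+\norm{X}_2^2$. And it vanishes exactly in the orthogonal case.

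You could even drop the appeal to \cref{lem:B-ortho proj} at the end. The supremum formula already says $\norm{\Pi}_B=1$ if and only if $\Pi$ annihilates $\calR(\Pi)^{\bot_B}$. Since $\calN(\Pi)$ and $\calR(\Pi)^{\bot_B}$ both have dimension $n-k$, this is exactly $\calN(\Pi)=\calR(\Pi)^{\bot_B}$, the definition of a $B$-orthogonal projection.

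A coordinate-free alternative is to write $x=u+w$ with $u\in\calR(\Pi)$ and $w\in\calR(\Pi)^{\bot_B}$, so that $\Pi x=u+\Pi w$, and then apply Cauchy--Schwarz to $\norm{u}_B+s\norm{w}_B$. This gives the supremum formula without $B^{1/2}$. However, $\norm{\Pi}_B=\norm{I-\Pi}_B$ then needs a separate argument, whereas your computation of $(I-\widehat\Pi)^H(I-\widehat\Pi)$ gives it at no extra cost.
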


Based on this idea of $B$-orthogonality, we introduce the general concept of the adjoint with respect to the $B$-inner product. 

\begin{definition}\label{def:B-adjoint B-normal B-ortho}
    Let $A \in \Cnn$. The $B$-adjoint of $A$ is defined as $A^+ = B^{-1}A^H B$. Then we say that $A$ is $B$-normal if $AA^+ = A^+A$ and $A$ is $B$-orthogonal if $A^+=A$. 
\end{definition}

The definition of $B$-adjoint is motivated by the fact that it is the unique matrix with
\begin{displaymath}
    \< Ax, y \>_B = \< x, A^+y \>_B
\end{displaymath}
for all $x,y \in \Cn$. For the standard inner product, that is $B= I_n$, the $I_n$-adjoint reduces to the Hermitian transpose and hence an $I_n$-orthogonal matrix is Hermitian. If $A$ is HPD and we consider the $A$-inner product, the $A$-adjoint is given by $A$. The definition of $B$-orthogonality in \cref{def:B-adjoint B-normal B-ortho} is consistent with the result of \cref{lem:B-ortho proj} because the right hand side of \cref{eq:B-ortho proj} is exactly the $B$-adjoint of $\Pi$. It is also easy to see that a $B$-orthogonal matrix $A$ is $B$-normal as well and satisfies $A^HB = BA $. 

The close relation of the $B$-adjoint to the Hermitian transpose also shows in its other properties.

\begin{proposition}
Let $A,C \in \Cnn$, then it holds $(A+C)^+ = A^+ + C^+$, $(AC)^+ = C^+A^+$ and $(A^+)^+ = A$.
\end{proposition}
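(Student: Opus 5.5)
The plan is to compute directly from the definition $A^+ = B^{-1}A^H B$ in \cref{def:B-adjoint B-normal B-ortho}, using only the standard rules for the Hermitian transpose, $(A+C)^H = A^H + C^H$, $(AC)^H = C^H A^H$ and $(A^H)^H = A$, together with the fact that $B$ is HPD, so $B^H = B$ and $(B^{-1})^H = B^{-1}$.

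For additivity, distributivity of matrix multiplication gives
\begin{displaymath}
    (A+C)^+ = B^{-1}(A^H + C^H)B = B^{-1}A^HB + B^{-1}C^HB = A^+ + C^+ .
\end{displaymath}
For the product rule, I insert the identity $I = BB^{-1}$ between the two transposed factors:
\begin{displaymath}
    (AC)^+ = B^{-1}C^HA^HB = B^{-1}C^H B B^{-1} A^H B = C^+A^+ .
\end{displaymath}

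For the involution property, I first compute $(A^+)^H = (B^{-1}A^HB)^H = B^H A (B^{-1})^H = BAB^{-1}$, which uses the Hermitian property of $B$ and $B^{-1}$. Then
\begin{displaymath}
    (A^+)^+ = B^{-1}(A^+)^H B = B^{-1}BAB^{-1}B = A .
\end{displaymath}

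None of these steps is a real obstacle. The only point needing care is the third identity, since that is the one place where the Hermitian property of $B$ is actually used. As a cross-check, one can also derive all three rules from the uniqueness of the adjoint via $\langle Ax,y\rangle_B = \langle x, A^+y\rangle_B$. For example, $\langle ACx,y\rangle_B = \langle Cx, A^+y\rangle_B = \langle x, C^+A^+y\rangle_B$ for all $x,y$, and uniqueness then gives $(AC)^+ = C^+A^+$.
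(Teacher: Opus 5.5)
Your proof is correct: all three identities follow from $A^+ = B^{-1}A^HB$ and the rules for the Hermitian transpose, exactly as you compute. You are also right that $B^H = B$ is needed only for $(A^+)^+ = A$. The paper states this proposition without proof, as an immediate consequence of the definition, so your direct verification is precisely the intended argument.
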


\begin{remark}
    One can argue that it would be more suitable to say a matrix is $B$-Hermitian instead of $B$-orthogonal since the $B$-adjoint reduces to the Hermitian transpose for $B= I_n$. But our focus is on projections where the notion of $B$-orthogonal projections is already established and therefore we use the term $B$-orthogonal.
\end{remark}

Although $B$-orthogonality is a useful property, its assumption is very restricting. We will show that the weaker assumption of $B$-normality is often sufficient for our purposes. Similarly to classical normality there exist a lot of equivalent characterizations for $B$-normality. In \cite{ElsIkr1998} and \cite{GroJohSaWol1987} detailed lists with equivalent properties of $I_n$-normality can be found. Most of these properties can be generalized to $B$-normal matrices. We state a few selected characterizations that are relevant for this work.

\begin{definition}
    $A$ is called $B$-unitary if $A^H B A = I_n$, i.e. the columns of $A$ are orthonormal in the $B$-inner product.
\end{definition}

For a $B$-unitary matrix $A$ it holds $A^+A = B^{-1}$ and $A^{-1} = A^H B$ as well as $B = A^{-H}A^{-1}$. The last equation already shows that the structure of the matrix $B$ also depends on the underlying matrix $A$. This behavior is described in the fifth characterization for $B$-normality in the following result which is mostly proven in \cite[Theorem~3.1]{LieStr2008}.   

\begin{theorem}\label{thm:B-normal_charact}
    The following statements are equivalent: 
    \begin{enumerate}[(1)]
        \item $A$ is $B$-normal.
        \item \label{thm:enum:Aplus p(A)} $A^+ = p(A)$ for some polynomial $p \in \C[z]$.
        \item \label{thm:enum:B-unitarily diag} $A$ is $B$-unitarily diagonalizable, that is there exists a $B$-unitary matrix $U \in \Cnn$ and a diagonal matrix $D\in \Cnn$ such that $A = U D U^{-1}$. 
        \item \label{thm:enum:eigenvector A Aplus} If $x$ is an eigenvector of $A$ to the eigenvalue $\lambda$, then $x$ is an eigenvector of $A^+$ to the eigenvalue $\lambdaquer$.
        \item \label{thm:enum:diag and decomp B} $A$ is diagonalizable with eigendecomposition $A = W\Lambda W^{-1}$ (without loss of generality we consider the eigenvalues and eigenvectors of $A$ ordered so that equal eigenvalues form a single diagonal block in $\Lambda$). Furthermore, using the eigenvector matrix $W$, the matrix $B^{-1}$ has a decomposition $B^{-1} = WDW^H$ where $D$ is an HPD block diagonal matrix with block sizes corresponding to those in $\Lambda$. There also exists a polynomial $p \in \C[z]$ such that $p(\Lambda) = \Lambda^H$. 
    \end{enumerate}
\end{theorem}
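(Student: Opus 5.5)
The whole argument rests on one reduction. With $B=B^{1/2}B^{1/2}$, set $\tilde A = B^{1/2}AB^{-1/2}$. Then
\[
\tilde A^H = B^{-1/2}A^HB^{1/2} = B^{1/2}A^+B^{-1/2},
\]
so $A$ is $B$-normal exactly when $\tilde A$ is normal in the usual sense. Each of (1)--(5) should likewise transform into its classical counterpart for $\tilde A$, and then I can quote the standard equivalences for normal matrices (e.g.\ from \cite{GroJohSaWol1987, ElsIkr1998}). The dictionary is as follows.
\begin{itemize}
\item A matrix $U$ is $B$-unitary iff $V=B^{1/2}U$ is unitary, and $A=UDU^{-1}$ iff $\tilde A = VDV^{-1}=VDV^H$.
\item $A^+=p(A)$ iff $\tilde A^H = p(\tilde A)$.
\item $Ax=\lambda x$ iff $\tilde A(B^{1/2}x)=\lambda B^{1/2}x$, and $A^+x=\lambdaquer x$ iff $\tilde A^H (B^{1/2}x) = \lambdaquer B^{1/2}x$.
\end{itemize}
Under this dictionary, (1)$\Leftrightarrow$(2)$\Leftrightarrow$(3)$\Leftrightarrow$(4) become the classical facts: normality, $\tilde A^H$ being a polynomial in $\tilde A$ (Lagrange interpolation of $\lambda_i\mapsto\lambdaquer_i$ on the spectrum), unitary diagonalizability (Schur form plus normality of a triangular matrix), and eigenvectors of $\tilde A$ being eigenvectors of $\tilde A^H$ with conjugate eigenvalues.

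Alternatively, some links can be proved directly without the reduction:
\begin{itemize}
\item (3)$\Rightarrow$(1): we have $A^+ = B^{-1}U^{-H}D^HU^HB$, and since $U^HBU=I$ gives $U^{-1}=U^HB$, this equals $UD^HU^{-1}$. Hence $AA^+=UDD^HU^{-1}=A^+A$.
\item (2)$\Rightarrow$(1) is immediate, because $p(A)$ commutes with $A$.
\item (1)$\Rightarrow$(4): for $Ax=\lambda x$, expand $\norm{(A^+-\lambdaquer I)x}_B^2 = \<(A-\lambda I)(A^+-\lambdaquer I)x,x\>_B$ and use $AA^+=A^+A$ to commute the factors, which shows the norm is zero.
\item (4)$\Rightarrow$(3) is not immediate, since (4) alone does not give diagonalizability. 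I would go through $\tilde A$: (4) says every eigenvector of $\tilde A$ is one of $\tilde A^H$. Induct by deflation. Take an eigenvector $v$, note that $v^\perp$ is $\tilde A$-invariant because $\tilde A^H v=\lambdaquer v$, and check that the property in (4) persists on the invariant subspace $v^{\perp}$.
\end{itemize}

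For (5), I would prove (3)$\Rightarrow$(5) and (5)$\Rightarrow$(2).
\begin{itemize}
\item (3)$\Rightarrow$(5): a $B$-unitary $U$ gives $B^{-1}=UU^H$. Any other eigenvector matrix has the form $W=US$ for a block diagonal nonsingular $S$ matching the eigenvalue multiplicities, because eigenspaces are fixed while columns within an eigenspace may be recombined. Then $B^{-1}=WS^{-1}S^{-H}W^H$, and $D=S^{-1}S^{-H}$ is HPD and block diagonal. The polynomial $p$ with $p(\Lambda)=\Lambda^H$ exists by Lagrange interpolation on the distinct eigenvalues.
\item (5)$\Rightarrow$(2): compute
\[
A^+ = B^{-1}A^HB = WDW^H\,W^{-H}\Lambda^HW^H\,W^{-H}D^{-1}W^{-1} = WD\Lambda^HD^{-1}W^{-1}.
\]
Since $D$ is block diagonal with blocks on which $\Lambda^H$ is a scalar multiple of the identity, $D\Lambda^H D^{-1}=\Lambda^H=p(\Lambda)$. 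Therefore $A^+=Wp(\Lambda)W^{-1}=p(A)$.
\end{itemize}

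I expect the main obstacle to be (4)$\Rightarrow$(1)/(3), since that direction needs the deflation argument rather than a one-line computation. I would handle it by passing to $\tilde A$ and citing or reproducing the classical proof. The remaining care is bookkeeping in (5): stating precisely that the block structure of $D$ matches the eigenvalue clusters, which is exactly what makes $D$ commute with $\Lambda^H$.
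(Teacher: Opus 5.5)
Your proof is correct. Its backbone coincides with the paper's: the paper also reduces everything to normality of $\widetilde A = B^{\frac12}AB^{-\frac12}$, proves (1)$\Leftrightarrow$(3) exactly as you do, and gets (1)$\Rightarrow$(2) by interpolating $\lambda\mapsto\lambdaquer$ on the spectrum.

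You differ in the links the paper outsources. For (1)$\Rightarrow$(4) the paper goes through (2): it writes $A^+x=p(\lambda)x$ and identifies $p(\lambda)=\lambdaquer$ from $p(\lambda)\<x,x\>_B=\<A^+x,x\>_B=\<x,Ax\>_B$. Your identity $\norm{(A^+-\lambdaquer I)x}_B^2=\<(A-\lambda I)(A^+-\lambdaquer I)x,x\>_B=0$ needs only commutativity.

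For (4)$\Rightarrow$(1) the paper cites Condition~10 of \cite{GroJohSaWol1987}, and for (1)$\Leftrightarrow$(5) it cites \cite[Theorem~3.1]{LieStr2008}. You replace these with a deflation argument and the cycle (3)$\Rightarrow$(5)$\Rightarrow$(2). The deflation does work: an eigenvector $y$ of the compression $Q^H\widetilde AQ$ to $v^\perp$ gives the eigenvector $Qy$ of $\widetilde A$, so $Q^H\widetilde A^HQy=\overline{\mu}y$.

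The payoff is a self-contained proof. Your (3)$\Rightarrow$(5) step also shows $B^{-1}=WDW^H$ for \emph{every} eigenvector matrix $W$ ordered in eigenvalue blocks, which resolves the quantifier on $W$ that (5) leaves implicit. You should say that the columns of $U$ may first have to be permuted to match the ordering of $\Lambda$, which preserves $B$-unitarity. Your (5)$\Rightarrow$(2) computation $WD\Lambda^HD^{-1}W^{-1}=W\Lambda^HW^{-1}$ is the one the paper carries out later in the proof of \cref{prop:B-orthogonal_iff_eig_real}.
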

\begin{proof}
    The properties follow from the fact that $AA^+ = A^+A$ is equivalent to 
    \begin{displaymath}
        (B^{\frac12} AB^{-\frac12}) (B^{-\frac12}A^H B^{\frac12}) = (B^{-\frac12}A^H B^{\frac12})(B^{\frac12}A B^{-\frac12})
    \end{displaymath} 
    and hence $\widetilde{A} = B^{\frac12}A B^{-\frac12}$ is normal. Therefore $B$-normality can be reduced to normality in the classic sense and any characterization from \cite{GroJohSaWol1987} and \cite{ElsIkr1998} can be used.

    $\emph{(1)} \Leftrightarrow \emph{(3)}$: This proof is based on the proof of \cite[Theorem~3.1]{LieStr2008}. The matrix $\widetilde{A}$ can be unitarily diagonalized, i.e. $\widetilde{A} = U\Lambda U^H$ hence $A = B^{-\frac12} U \Lambda U^H B^{\frac12}$. With $W := B^{-\frac12} U$ we get $A = W\Lambda W^{-1}$ with $W^HBW = I$ (in particular $W^{-1} = W^HB$). Thus $A$ is $B$-unitarily diagonalizable. Conversely, if $A$ is $B$-unitary diagonalizable we have $A = U\Lambda U^{-1}$ with $U^HBU = I$. Then it holds
    \begin{align*}
        AA^+ &= U\Lambda U^{-1}B^{-1}U^{-H}\Lambda^H U^HB = U\Lambda \Lambda^H U^{-1} = U \Lambda^H \Lambda U^{-1} \\
        &= B^{-1}U^{-H}\Lambda^HU^H BU \Lambda U^{-1} = A^+A
    \end{align*}
    and thus $A$ is $B$-normal. We used that diagonal matrices commute and $U^HB = U^{-1}$.

    $\emph{(1)} \Leftrightarrow \emph{(2)}$: Let $A$ be $B$-normal. By $\emph{(3)}$, $A$ is $B$-unitarily diagonalizable with $A = U\Lambda U^{-1}$ where $U$ is $B$-unitary and $\Lambda$ is a diagonal matrix containing the eigenvalues of $A$. Furthermore, there exists an interpolation polynomial with $p(\lambda) = \overline{\lambda}$ for all eigenvalues $\lambda$ of $A$, and thus 
    \begin{displaymath}
        A^+ = B^{-1} A^H B = B^{-1} U^{-H} \Lambda^H U^H B = B^{-1} BU p(\Lambda) U^{-1} = U p(\Lambda) U^{-1} = p(A).
    \end{displaymath}
    On the other hand, if $A^+ = p(A)$, then $A^+$ commutes with $A$.

    $\emph{(1)} \Leftrightarrow \emph{(4)}$: Let $A$ be $B$-normal and $(\lambda, x)$ be an eigenpair of $A$, that is $Ax = \lambda x$. By $(2)$ we have $A^+x = p(A)x = p(\lambda)x$, hence $x$ is an eigenvector of $A^+$. We even have
    \begin{displaymath}
        p(\lambda) \<x,x\>_B = \<p(\lambda)x,x\>_B = \<A^+x,x\>_B = \<x,Ax\>_B = \overline{\lambda} \< x, x \>_B
    \end{displaymath}
    and because $x \neq 0$ it holds $p(\lambda) = \overline{\lambda}$. Therefore, $x$ is an eigenvector of $A^+$ to the eigenvalue $\overline{\lambda}$.
    
    Conversely, if $\emph{(4)}$ holds then $y := B^\frac12 x$ is an eigenvector of $\widetilde{A}$ and of $\widetilde{A}^H$. Therefore $\widetilde{A}$ is normal and thus $A$ is $B$-normal (cf. \cite[Condition~10]{GroJohSaWol1987}).

    $\emph{(1)} \Leftrightarrow \emph{(5)}$: This is an immediate consequence of \cite[Theorem~3.1]{LieStr2008}.
\end{proof}

The concept of $B$-normal matrices is also used in the theory of short recurrences for computing $B$-orthogonal Krylov subspace bases \cite{LieStr2008, FabMan1984, LieStr2013}. There, the length of the recurrence for the computation of the Krylov subspace basis vectors depends on the degree of the polynomial $p$ in \cref{thm:B-normal_charact}~(\labelcref{thm:enum:Aplus p(A)}). Note that we are not interested in the degree of $p$ at all but rather the set $\mathcal{B}$ of all possible matrices $B$ such that $A$ can be $B$-normal. This set can be characterized using equivalence \cref{thm:B-normal_charact}~(\labelcref{thm:enum:diag and decomp B}) and is given by
\begin{align}\label{eq:setB}
    \mathcal{B} = \{ (WDW^H)^{-1} \mid \ &D \text{ is an HPD block diagonal matrix} \\ 
    &\text{with block sizes corresponding to those in } \Lambda \}. \nonumber
\end{align}

$\Lambda$ denotes the eigenvalue matrix of $A$. The structure of the matrices in $\mathcal{B}$ will be of importance later when we derive convergence bounds for AMG methods. Using these characterizations, we can conclude some useful properties of $B$-orthogonal matrices.

\begin{proposition} \label{prop:eigB-ortho}
    Let $A \in \Cnn $ be $B$-orthogonal. Then all eigenvalues of $A$ are real.
\end{proposition}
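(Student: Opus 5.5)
The plan is to imitate the classical proof that Hermitian matrices have real spectrum, with the standard inner product replaced by the $B$-inner product. Let $(\lambda, x)$ be an eigenpair of $A$, so $Ax = \lambda x$ with $x \neq 0$. Since $A$ is $B$-orthogonal, \cref{def:B-adjoint B-normal B-ortho} gives $A^+ = A$. Combined with the defining property $\<Ax,y\>_B = \<x,A^+y\>_B$ of the $B$-adjoint, this yields
\begin{displaymath}
    \lambda \<x,x\>_B = \<Ax,x\>_B = \<x,A^+x\>_B = \<x,Ax\>_B = \overline{\lambda}\<x,x\>_B .
\end{displaymath}
The inner product here is linear in the first argument and conjugate-linear in the second, since $\<x,y\>_B = y^HBx$. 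Because $B$ is HPD and $x\neq 0$, we have $\<x,x\>_B = \norm{x}_B^2 > 0$. Dividing by it gives $\lambda = \overline{\lambda}$, so $\lambda \in \mathbb{R}$.

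As an alternative route, the same argument can be phrased through \cref{thm:B-normal_charact}~(\labelcref{thm:enum:eigenvector A Aplus}). A $B$-orthogonal matrix is $B$-normal, so $x$ is an eigenvector of $A^+ = A$ for the eigenvalue $\overline{\lambda}$. Then $\lambda x = Ax = \overline{\lambda}x$, which again forces $\lambda = \overline{\lambda}$.

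A third check is to note that $B^{\frac12}AB^{-\frac12}$ is Hermitian, because $A^HB = BA$. This matrix is similar to $A$, so the two have the same spectrum, and that spectrum is real.

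No step is a real obstacle. The only point that needs care is the conjugation convention in $\<\cdot,\cdot\>_B$, so that the scalar $\lambda$ leaves the second argument as $\overline{\lambda}$.
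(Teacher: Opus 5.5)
Your proof is correct, but your main route differs from the paper's. The paper first notes that a $B$-orthogonal matrix is $B$-normal. It then invokes characterization~(2) of \cref{thm:B-normal_charact}, which gives a polynomial $p$ with $A^+ = p(A)$ and $p(\lambda) = \overline{\lambda}$ on the spectrum. Since $A^+ = A$, it gets $p(\lambda) = \lambda$ for every eigenvalue, hence $\lambda = \overline{\lambda}$. Your direct computation $\lambda \norm{x}_B^2 = \langle Ax, x\rangle_B = \langle x, Ax\rangle_B = \overline{\lambda}\norm{x}_B^2$ needs none of that machinery: no $B$-normality, no diagonalization, no interpolation polynomial. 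It uses only the defining adjoint identity and $\norm{x}_B > 0$. In fact it is the same computation the paper carries out inside its proof of (1)$\Leftrightarrow$(4) of \cref{thm:B-normal_charact}, applied directly with $A^+ = A$. It also avoids the paper's loose intermediate claim that $p(A) = A$ forces $p(z) = z$ for all $z$. That does not follow, since only the values of $p$ on $\sigma(A)$ are determined, though the paper's conclusion is unaffected. In exchange, the paper's route sets up the converse in \cref{prop:B-orthogonal_iff_eig_real}, where the eigendecomposition viewpoint of $B$-normal matrices is actually needed. Your second route is essentially the paper's, using characterization~(4) instead of~(2). Your third route is the reduction to the Hermitian matrix $B^{1/2}AB^{-1/2}$, the same device the paper uses to prove \cref{thm:B-normal_charact} itself.
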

\begin{proof}
    Since $A$ is also $B$-normal, there exists a polynomial $p$ with $p(A) = A^+ = A$, hence $p(z) = z$ for all $z \in \C$. The eigenvalues of $A$ then satisfy $p(\lambda) = \overline{\lambda} = \lambda$ and thus all eigenvalues of $A$ are real. 
\end{proof} 

\begin{proposition}\label{prop:B-orthogonal_iff_eig_real}
    Let $A \in \Cnn$ be $B$-normal. Then $A$ is $B$-orthogonal if and only if all eigenvalues of $A$ are real.
\end{proposition}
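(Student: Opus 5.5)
One direction is already available: if $A$ is $B$-orthogonal, then \cref{prop:eigB-ortho} shows that all eigenvalues of $A$ are real, and $B$-normality is not even needed. So the real content is the converse, and for it I will use the $B$-unitary diagonalization from \cref{thm:B-normal_charact}~(\labelcref{thm:enum:B-unitarily diag}).

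Assume $A$ is $B$-normal and all its eigenvalues are real.
\begin{itemize}
\item By \cref{thm:B-normal_charact}~(\labelcref{thm:enum:B-unitarily diag}), write $A = U\Lambda U^{-1}$, where $U^HBU = I_n$ and $\Lambda$ is diagonal with the eigenvalues of $A$ on its diagonal.
\item Since $U$ is $B$-unitary, we have $U^{-1} = U^HB$, and therefore $U^{-H} = BU$.
\item Compute the $B$-adjoint directly:
\begin{displaymath}
    A^+ = B^{-1}A^HB = B^{-1}U^{-H}\Lambda^H U^H B = B^{-1}BU\Lambda^H U^{-1} = U\Lambda^H U^{-1}.
\end{displaymath}
This is exactly the computation already carried out in the proof of (1)$\Leftrightarrow$(2).
\item Because the eigenvalues are real, $\Lambda^H = \overline{\Lambda} = \Lambda$. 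Hence $A^+ = U\Lambda U^{-1} = A$, so $A$ is $B$-orthogonal by \cref{def:B-adjoint B-normal B-ortho}.
\end{itemize}

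An alternative route goes through characterization (\labelcref{thm:enum:Aplus p(A)}). There $A^+ = p(A)$ with $p$ interpolating $p(\lambda) = \overline{\lambda}$ on the spectrum. If the spectrum is real, then $p(\lambda) = \lambda$ on the spectrum, and diagonalizability of $A$ gives $p(A) = A$. I prefer the first computation because it is fully explicit.

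There is no real obstacle here. The only point to check is that the identity $U^{-H} = BU$ is used correctly, and this follows immediately from the $B$-unitarity of $U$.
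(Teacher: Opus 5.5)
Your proof is correct and is essentially the paper's argument: the forward direction is \cref{prop:eigB-ortho}, and for the converse you compute $A^+ = U\Lambda^H U^{-1}$ from an eigendecomposition supplied by \cref{thm:B-normal_charact} and then use $\Lambda^H = \Lambda$. The only difference is that you take the $B$-unitary diagonalization from characterization (3), whereas the paper uses $B^{-1} = WDW^H$ from characterization (5) and must additionally note that the block diagonal $D$ commutes with $\Lambda$, a step your choice avoids.
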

\begin{proof}
     Let $A$ be $B$-normal. The first part is proven in \cref{prop:eigB-ortho}. Assume that the eigenvalues $\lambda$ of $A$ are real. Then it holds $\overline{\lambda} = \lambda = p(\lambda)$ for the identity polynomial $p$. Since $A$ is $B$-normal, we know by \cref{thm:B-normal_charact}~(\labelcref{thm:enum:diag and decomp B}) that $A$ has an eigendecomposition $A = W\Lambda W^{-1}$ and $B^{-1} = WDW^H$ for an HPD block diagonal matrix $D$ where the block sizes corresponds to those in $\Lambda$. Then it holds
    \begin{align*}
        A^+ &= B^{-1}A^HB = (WDW^H)(W^{-H}\Lambda^H W^H)(W^{-H}D^{-1}W^{-1})
        \\& = WD\Lambda^HD^{-1}W^{-1} = W \Lambda^H W^{-1} = W\Lambda W^{-1} = A
    \end{align*}
    and thus $A$ is $B$-orthogonal. Here, $D$ and $\Lambda$ commute since they have the same block structure
\end{proof}

Using this result, we obtain another important property of the $B$-norm which for $B=I_n$ again reduces to a known property of the spectral norm.

\begin{proposition}\label{prop:B-norm lambda_max}
    Let $A \in \Cnn$. Then it holds
    \begin{displaymath}
        \norm{A}_B^2  = \norm{AA^+}_B = \norm{A^+A}_B = \norm{A^+}_B^2 = \lambda_{max}(A^+A) = \lambda_{max}(AA^+),
    \end{displaymath}
    where $\lambda_{max}$ denotes the largest eigenvalue.
\end{proposition}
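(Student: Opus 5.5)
The plan is to reduce everything to the spectral norm through the similarity $\widetilde{A} = B^{\frac12}AB^{-\frac12}$, as in the proof of \cref{thm:B-normal_charact}. First I record how the $B$-adjoint transforms:
\begin{displaymath}
    B^{\frac12}A^+B^{-\frac12} = B^{-\frac12}A^HB^{\frac12} = \widetilde{A}^H .
\end{displaymath}
So $A^+$ corresponds to $\widetilde{A}^H$. Likewise $AA^+$ and $A^+A$ are similar, via $B^{\frac12}$, to $\widetilde{A}\widetilde{A}^H$ and $\widetilde{A}^H\widetilde{A}$.

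Next I use the formula $\norm{C}_B = \norm{B^{\frac12}CB^{-\frac12}}_2$ stated in \cref{sec:proj and B-normal matrices}. It gives $\norm{A}_B = \norm{\widetilde{A}}_2$, $\norm{A^+}_B = \norm{\widetilde{A}^H}_2$, $\norm{AA^+}_B = \norm{\widetilde{A}\widetilde{A}^H}_2$ and $\norm{A^+A}_B = \norm{\widetilde{A}^H\widetilde{A}}_2$. The standard spectral-norm identities then apply:
\begin{displaymath}
    \norm{\widetilde{A}}_2^2 = \norm{\widetilde{A}^H}_2^2 = \norm{\widetilde{A}\widetilde{A}^H}_2 = \norm{\widetilde{A}^H\widetilde{A}}_2 = \lambda_{max}(\widetilde{A}^H\widetilde{A}) = \lambda_{max}(\widetilde{A}\widetilde{A}^H).
\end{displaymath}
They hold because both products are HPSD, so their spectral norm equals their largest eigenvalue, and $\widetilde{A}^H\widetilde{A}$ and $\widetilde{A}\widetilde{A}^H$ have the same nonzero eigenvalues.

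Finally, similarity preserves eigenvalues. Hence $\lambda_{max}(A^+A) = \lambda_{max}(\widetilde{A}^H\widetilde{A})$ and $\lambda_{max}(AA^+) = \lambda_{max}(\widetilde{A}\widetilde{A}^H)$, and all the equalities in \cref{prop:B-norm lambda_max} follow.

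One point needs care: $\lambda_{max}(A^+A)$ must make sense, i.e.\ the eigenvalues of $A^+A$ must be real. The similarity argument already shows this. Alternatively, one can note that $A^+A$ is $B$-orthogonal, since $(A^+A)^+ = A^+A$, and therefore has real eigenvalues by \cref{prop:eigB-ortho}.

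Nothing here is a genuine obstacle. The only step needing care is checking the transformation rule for $A^+$ under conjugation by $B^{\frac12}$, which is a one-line computation.
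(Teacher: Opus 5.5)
Your proposal is correct and follows essentially the paper's approach: the paper's proof is the one-line reduction to the spectral norm via $\norm{A}_B = \norm{B^{\frac12}AB^{-\frac12}}_2$. You have simply spelled out the details the paper leaves implicit, namely the transformation rule $B^{\frac12}A^+B^{-\frac12} = \widetilde{A}^H$ and the fact that similarity preserves eigenvalues.
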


\begin{proof}
    Since all the properties are known for $B = I_n$, that is the spectral norm, we can use the fact that $\norm{A}_B = \norm{B^{\frac12} A B^{-\frac12}}_2$ to obtain the result.
\end{proof}

\begin{remark}
    Similar to $\lambda_{max}$ we define $\lambda_{min}$ as the smallest eigenvalue of a matrix. Both of these quantities are well-defined if the matrix only has real eigenvalues. But since $A^+A$ is $B$-orthogonal for any matrix $A \in \Cnn$, by \cref{prop:eigB-ortho} its eigenvalues are always real.
\end{remark}

Another important property of $B$-normal matrices is their relation to the spectral radius $\rho$. 
\begin{proposition}\label{prop:B-normal_spectral_radius}
    Let $A \in \Cnn$ be $B$-normal. Then $\norm{A}_B = \rho(A)$. 
\end{proposition}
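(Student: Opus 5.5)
Two routes suggest themselves, and I plan to use one of them and keep the other as a check.

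The first route reduces to the classical case. The proof of \cref{thm:B-normal_charact} shows that $A$ is $B$-normal exactly when $\widetilde{A} = B^{\frac12}AB^{-\frac12}$ is normal in the usual sense. From the definition of the $B$-norm we have $\norm{A}_B = \norm{\widetilde{A}}_2$. Since $\widetilde{A}$ is similar to $A$, the two matrices have the same eigenvalues, so $\rho(\widetilde{A}) = \rho(A)$. It then remains only to invoke the classical fact that a normal matrix has spectral norm equal to its spectral radius. For completeness I would derive that fact briefly: write $\widetilde{A} = U\Lambda U^H$ with $U$ unitary, so that $\norm{\widetilde{A}}_2 = \norm{\Lambda}_2 = \max_i \abs{\lambda_i}$.

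The second route stays entirely within the $B$-framework and is what I would actually write down. By \cref{thm:B-normal_charact}~(\labelcref{thm:enum:B-unitarily diag}) we can write $A = U\Lambda U^{-1}$ with $U$ $B$-unitary, so that $U^{-1} = U^HB$. The computation in the proof of (1)$\Rightarrow$(3) then gives $A^+A = U\Lambda^H\Lambda U^{-1}$. By \cref{prop:B-norm lambda_max}, $\norm{A}_B^2 = \lambda_{max}(A^+A)$. The eigenvalues of $U\,\diag(\abs{\lambda_i}^2)\,U^{-1}$ are the numbers $\abs{\lambda_i}^2$, so $\lambda_{max}(A^+A) = \max_i \abs{\lambda_i}^2 = \rho(A)^2$. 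Taking square roots gives the claim.

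There is no real obstacle in either route. The only point to check is that $A^+A = U\Lambda^H\Lambda U^{-1}$, which follows from $B^{-1}U^{-H} = U$; this identity is equivalent to $U^HBU = I$.
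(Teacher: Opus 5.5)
Your proposal is correct, and the second route you intend to write down is exactly the paper's proof. The paper takes the $B$-unitary diagonalization $A = W\Lambda W^{-1}$ from \cref{thm:B-normal_charact}~(\labelcref{thm:enum:B-unitarily diag}), uses $W^HBW = I_n$ to get $A^+A = W\Lambda^H\Lambda W^{-1}$, and then reads off $\norm{A}_B^2 = \lambda_{max}(A^+A) = \max_i \abs{\lambda_i}^2 = \rho(A)^2$ via \cref{prop:B-norm lambda_max}. Your first route, through the normal matrix $B^{\frac12}AB^{-\frac12}$, is an equally valid alternative; it is the same reduction the paper uses in proving \cref{thm:B-normal_charact} and \cref{prop:B-norm lambda_max}.
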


\begin{proof}
    With \cref{thm:B-normal_charact}~(\labelcref{thm:enum:B-unitarily diag}) the matrix $A$ is $B$-unitarily diagonalizable, that is $A = W \Lambda W^{-1}$ with $W^HBW = I_n$ and $\Lambda = \diag(\lambda_1, \dots, \lambda_n)$ containing the eigenvalues of $A$. Then we have
    \begin{align*}
        \norm{A}_B^2 &= \lambda_{max}(A^+A) = \lambda_{max}(B^{-1}W^{-H}\Lambda^HW^HBW\Lambda W^{-1}) = \lambda_{max}(W\Lambda^H\Lambda W^{-1})
        \\ & = \lambda_{max}(\Lambda^H\Lambda) = \lambda_{max}(\diag(\abs{\lambda_1}^2, \dots, \abs{\lambda_n}^2))
        = \max_{i=1, \dots, n} \abs{\lambda_i}^2 = \rho(A)^2.
    \end{align*}
    Taking the square root yields the result.
\end{proof}

\section{\texorpdfstring{$B$}{B}-normal matrices in nonsymmetric AMG}\label{sec:B-normal in multigrid}
We consider the linear algebraic system
\begin{displaymath}
    Ax = b
\end{displaymath}
where $A \in \Cnn$ is a nonsingular, nonsymmetric and indefinite matrix and $b \in \C^n$. We want to solve this system using algebraic two-grid methods. Such methods typically consist of two components, the smoothing and the coarse-grid correction. Smoothing is usually a basic iterative method such as the Gauss-Seidel or Jacobi method and is done for a few iterations. The coarse-grid correction is based on a projection and requires the division of the $n$ variables in two smaller groups with $n_c < n$ and $n_f < n$ variables such that $n_c + n_f = n$. The number $n_c$ can be interpreted as the number of variables of the coarse grid although in AMG methods there is no real underlying geometric grid. However, to pass from $n$ to $n_c$ variables and vice versa, a restriction matrix $R \in \Cnnc$ and an interpolation matrix $P \in \Cnnc$ are needed. The coarse grid matrix is then given by
\begin{displaymath}
    A_c = R^HAP \in \Cncnc
\end{displaymath}
which we always assume to be nonsingular. This implies that $R$ and $P$ must have full rank. For handling singular coarse grid system see e.g. \cite{Not2016, KehNab2016}. For HPD $A$, the typical choice is $R = P$ with full rank which always yields nonsingular $A_c$.

A typical approach to guarantee the convergence of a two-grid method is to assume some kind of smoothing and approximation property of the smoothing step and the coarse-grid correction, respectively. We follow this strategy in order to establish $B$-norm bounds for the algebraic two-grid error operator. 

\subsection{The coarse-grid correction}

We start the analysis with the coarse-grid correction $\Pi_A$ where 
\begin{displaymath}
    \Pi_A := \Pi_A(P,R) := P(R^HAP)^{-1}R^HA = PA_c^{-1}R^HA \in \Cnn
\end{displaymath}
is a projection onto $\calR(P)$ along $\calN(R^HA)$. Here, we write $\Pi_A(P,R)$ to denote the dependency on $P$ and $R$. 

A key property for the following proofs is the $B$-orthogonality of the coarse-grid correction. This property can be characterized by multiple equivalent conditions, some of which we have already introduced in \cref{sec:proj and B-normal matrices}. The following result summarizes some of them based on the \cref{lem:B-ortho proj,lem:norm proj} and \cite[Corollary~2]{SouMan2024}.

\begin{lemma}\label{lem:equiv_cond Pi B-ortho}
    Let $R,P \in \Cnnc$ such that $R^HAP$ be nonsingular. Then $\Pi_A$ is a projection and the following statements are equivalent:
    \begin{enumerate}[(1)]
        \item $\Pi_A$ is $B$-orthogonal.
        \item $\Pi_A^+ = \Pi_A$.
        \item $\calR(\Pi)^{\bot_B} = \calN(\Pi_A)$.
        \item $\norm{\Pi_A}_B = \norm{I-\Pi_A}_B = 1$.
        \item $\calR(BP) = \calR(A^HR)$.
        \item \label{lem:enum:Pi B-ortho equal ranges BatNab} $\calR(P) = \calR(B^{-1}A^HR)$.
        \item $\calN(R^HA) = \calN(P^HB)$. 
    \end{enumerate}
\end{lemma}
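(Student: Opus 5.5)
First I will check that $\Pi_A$ is a projection: since $R^HAP$ is nonsingular, $\Pi_A^2 = PA_c^{-1}(R^HAP)A_c^{-1}R^HA = \Pi_A$. Next I will record the range and kernel. Because $P$ has full column rank and $A_c^{-1}R^HA$ is onto $\C^{n_c}$ (as $A_c$ is nonsingular), $\calR(\Pi_A)=\calR(P)$. Because $PA_c^{-1}$ is injective, $\calN(\Pi_A)=\calN(R^HA)$. These two identities are the bridge between the abstract conditions (1)--(4) and the concrete ones (5)--(7).

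The equivalences among (1)--(4) are essentially already available. (1)$\Leftrightarrow$(2) is \cref{lem:B-ortho proj}, since $\Pi_A^+=B^{-1}\Pi_A^HB$. (1)$\Leftrightarrow$(3) is the characterization stated after \cref{def:B-orth proj}, namely that a projection is $B$-orthogonal iff $\calR(\Pi)^{\bot_B}=\calN(\Pi)$. (1)$\Leftrightarrow$(4) follows from \cref{lem:norm proj}, which gives $\norm{\Pi_A}_B=\norm{I-\Pi_A}_B$ and characterizes norm one by $B$-orthogonality. Since $n_c\ge 1$ and $P$ has full rank, $\Pi_A\neq 0$. Since $n_c<n$, $\Pi_A\neq I_n$. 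So the nondegeneracy hypotheses of that lemma hold.

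For the concrete conditions I will route everything through (3), using the range and kernel identities above.
\begin{itemize}
\item \textbf{(3)$\Leftrightarrow$(7).} We have $\calR(P)^{\bot_B}=\{x : P^HBx=0\}=\calN(P^HB)$, so (3) reads $\calN(P^HB)=\calN(R^HA)$.
\item \textbf{(7)$\Leftrightarrow$(5).} Take orthogonal complements in the standard inner product, using $\calN(X^H)^\bot=\calR(X)$. Applied to $X=BP$ and $X=A^HR$, this gives $\calR(BP)=\calR(A^HR)$.
\item \textbf{(5)$\Leftrightarrow$(6).} Multiply by the nonsingular $B^{-1}$, which maps ranges bijectively: $\calR(P)=\calR(B^{-1}A^HR)$.
\end{itemize}

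The only step needing any care is the dimension and full-rank bookkeeping. This means confirming that $R$ and $P$ have full rank, which the paper notes follows from nonsingularity of $A_c$, and that $0\neq\Pi_A\neq I$, so that \cref{lem:norm proj} applies. Everything else is a direct translation.
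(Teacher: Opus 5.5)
Your proof is correct and follows the paper's route. The paper gives no separate argument: it obtains (1)--(4) from the same facts you use (the characterization $\Pi=B^{-1}\Pi^HB$, the range/kernel description of $B$-orthogonal projections, and the norm lemma for projections), and it cites Corollary~2 of \cite{SouMan2024} for the remaining conditions, which your short derivation of (3)$\Leftrightarrow$(7)$\Leftrightarrow$(5)$\Leftrightarrow$(6) via $\calR(\Pi_A)=\calR(P)$ and $\calN(\Pi_A)=\calN(R^HA)$ makes self-contained.
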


In \cite{BatNab2025} condition (\labelcref{lem:enum:Pi B-ortho equal ranges BatNab}) is used to obtain a $B$-orthogonal coarse-grid correction. There, given a restriction $R \in \Cnnc$ of full rank, the corresponding interpolation that yields a $B$-orthogonal coarse-grid correction is defined as $P_* = B^{-1}A^HR$. Conversely, given $P \in \Cnnc$, the corresponding restriction is defined as $R_* = A^{-H}BP$. Both operators require the inverse of either $A^H$ or $B$ which can be problematic. Examples for $P_*$ and $R_*$ for special cases of $B$ are discussed in \cite{BatNab2025}. However, when using $P_*$ or $R_*$ we obtain
\begin{equation}\label{eq:Pi_A with Pstar}
    \Pi_A(P_*,R) = \Pi_A(P,R_*) = P_*(P^H_* BP_*)^{-1}P_*^HB = \Pi_B(P_*,P_*).
\end{equation}
Choosing $P_*$ symmetrized the coarse-grid correction in the sense that $\Pi_B(P_*,P_*)$ has the same structure as $\Pi_A(P,P)$ in the HPD case. This is no coincidence as for $A=B$ HPD we have $P_* = R$ and $\Pi_B(P_*,P_*)$ becomes $\Pi_A(P,P)$. Note that any interpolation $P$ or restriction $R$ can be multiplied by any nonsingular matrix $S \in \Cncnc$ from the right without changing the resulting coarse-grid correction. The spaces $\calR(P)$ and $\calR(R)$ are independent of their choice of bases. Hence, any two operators $P_1, P_2$ or $R_1, R_2$ with $\calR(P_1) = \calR(P_2)$ or $\calR(R_1) = \calR(R_2)$ produce the same coarse-grid correction. This is especially true for $P_*$ and $R_*$ which already shows that there does not exist a unique pair of operators $(P,R)$ that results in a $B$-orthogonal coarse-grid projection.

The main advantage of using $P_*$ or $R_*$ is the fact that $\Pi_A(P_*,R)$ and $\Pi_A(P,R_*)$ are $B$-orthogonal for any HPD matrix $B$. This does not contradict \cref{thm:B-normal_charact}~(\labelcref{thm:enum:diag and decomp B}) and \cref{eq:setB} where all possible HPD matrices $B$ such that $\Pi_A$ can be $B$-orthogonal are characterized, because changes in $B$ result in different operators $P_*$ and $R_*$ and thus different $\Pi_A$. Note, that $\Pi_A$ is $B$-orthogonal if and only if $I - \Pi_A$ is $B$-orthogonal.

\subsection{The iteration matrices}

The algebraic two-grid methods and its error propagation matrix consist of pre- and post-smoothing steps combined with the coarse-grid correction. In our framework we consider the two error propagation operators
\begin{align}
    \Eplus := \Eplus(P,R) &:= (I-(M^{-1}A)^+)^{\nu_2}(I-\Pi_A(P,R))(I-M^{-1}A)^{\nu_1}, \label{eq:Eplus}
    \\ \Esouth := \Esouth(P,R) &:= (I-M^{-1}A)^{\nu_2}(I-\Pi_A(P,R))(I-M^{-1}A)^{\nu_1}.\label{eq:Esouth}
\end{align}
If the dependency of $R$ and $P$ is clear from the context, we simply write $\Eplus$ and $\Esouth$. The exponents $\nu_1, \nu_2 \in \N_0$ are the number of pre- and post-smoothing steps, respectively, and $M^{-1} \in \Cnn$ is the smoother which we always assume to be nonsingular. An obvious differences between the two operators is the $B$-adjoint of $M^{-1}A$ in the post-smoothing step of $\Eplus$ which introduces the additional dependency of $B$ into the operator. 

Since $\Pi_A$ is a projection, we can always split up both error propagation operators 
\begin{equation}
\Eplus = E_+^{0,\nu_2} \cdot E_+^{\nu_1,0} \quad \text{and} \quad \Esouth = E^{0,\nu_2} \cdot E^{\nu_1,0}.
\end{equation}

Although the operator $\Eplus$ is more complicated than $\Esouth$, it is the natural generalization of the error propagation operator for Hermitian positive definite $A$ which is defined as
\begin{equation}\label{eq:error_op HPD case}
    E_{\text{HPD}}^{\nu_1, \nu_2}(P,P) := (I-M^{-H}A)^{\nu_2}(I-P(P^HAP)^{-1}P^HA)(I-M^{-1}A)^{\nu_1}.
\end{equation}
For $A = B$ HPD the $A$-adjoint of $M^{-1}A$ is given by
\begin{displaymath}
    (M^{-1}A)^+ = A^{-1}(M^{-1}A)^HA = M^{-H}A
\end{displaymath}
and with $R = P$ the operator $\Eplus(P,P)$ is exactly the operator $E_{\text{HPD}}^{\nu_1, \nu_2}(P,P)$. 

On the other hand, there is the simpler error operator $\Esouth$ that uses the same smoother for pre- and post-smoothing, making the whole operator easier to apply. It was studied in \cite{Not2010, Not2016} and recently in \cite{KrzSouWimAliBraKah2025}. For the simple case $\nu_1 = 1$ and $\nu_2 = 0$, it holds $E_+^{1,0} = E^{1,0}$. This operator has been analyzed in \cite{AliBraKahKrzSchSou2025, BatNab2025, Xu2022}. 

If $M^{-1}A$ is $B$-orthogonal, it holds $\Eplus(P,R) = \Esouth(P,R)$ and the error matrices coincide. However, in general the two operators differ from each other and therefore we analyze their convergence behavior separately. 

The next result illustrates the fact that $\Eplus$ is the natural generalization of the error operator in the HPD case. It proves properties of $\Eplus$ that reduce to known properties of $E_{\text{HPD}}^{\nu_1, \nu_2}$ if we consider $A=B$ to be HPD \cite{FalVas2004}. We also see that $\Esouth$ requires additional assumptions to achieve the same results.

\begin{theorem} \label{thm:properties Eplus Esouth}
    Let $\Pi_A$ be $B$-orthogonal and $\nu \in \N_0$. Then it holds 
    \begin{enumerate}[1)] 
        \item $(E_+^{\nu,\nu})^+ = E_+^{\nu,\nu}$.
        \item \label{thm:enum:property Eplus only one smoothing adjoint} $(E_+^{\nu,0})^+ = E_+^{0,\nu}$ and $(E_+^{0,\nu})^+ = E_+^{\nu,0}$
        \item $B^{-1} (E_+^{\nu,0})^H = E_+^{0,\nu}B^{-1}$ and $B^{-1}(E_+^{0,\nu})^H = E_+^{\nu,0}B^{-1}$.
        \item \label{thm:enum:property norm Eplus split} $\norm{E_+^{\nu,\nu}}_B = \norm{E_+^{\nu,0}}_B^2 = \norm{E_+^{0,\nu}}_B^2$.
   \end{enumerate}
    If additionally $M^{-1}A$ is $B$-orthogonal, then the above properties also hold for $E^{\nu,\nu}$.
\end{theorem}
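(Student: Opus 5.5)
The plan is to derive everything from the algebraic rules for the $B$-adjoint, $(AC)^+ = C^+A^+$, $(A+C)^+ = A^++C^+$ and $(A^+)^+=A$, together with $\Pi_A^+ = \Pi_A$ from \cref{lem:equiv_cond Pi B-ortho}. Write $T := I-M^{-1}A$. Then $T^+ = I-(M^{-1}A)^+$ because $I^+ = B^{-1}IB = I$, and $(I-\Pi_A)^+ = I-\Pi_A$. With this notation
\begin{displaymath}
    E_+^{\nu,0} = (I-\Pi_A)T^{\nu}, \qquad E_+^{0,\nu} = (T^+)^{\nu}(I-\Pi_A), \qquad E_+^{\nu,\nu} = (T^+)^{\nu}(I-\Pi_A)T^{\nu}.
\end{displaymath}

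First I prove property 2). Applying the product rule gives $(E_+^{\nu,0})^+ = (T^\nu)^+(I-\Pi_A)^+ = (T^+)^\nu(I-\Pi_A) = E_+^{0,\nu}$. Taking the adjoint once more and using $(A^+)^+=A$ yields the second identity.

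Property 1) then follows from the splitting $E_+^{\nu,\nu} = E_+^{0,\nu}E_+^{\nu,0}$, which holds because $(I-\Pi_A)^2 = I-\Pi_A$. Writing $E := E_+^{\nu,0}$, we have $E_+^{\nu,\nu} = E^+E$, and $(E^+E)^+ = E^+(E^+)^+ = E^+E$.

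Property 3) is a restatement of 2) via the definition $C^+ = B^{-1}C^HB$. From $B^{-1}(E_+^{\nu,0})^HB = E_+^{0,\nu}$, multiplying on the right by $B^{-1}$ gives $B^{-1}(E_+^{\nu,0})^H = E_+^{0,\nu}B^{-1}$. The other identity follows in the same way.

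For property 4), I invoke \cref{prop:B-norm lambda_max} with $E = E_+^{\nu,0}$. It gives $\norm{E}_B^2 = \norm{E^+E}_B = \norm{E^+}_B^2$. By 1) and 2), $E^+E = E_+^{\nu,\nu}$ and $E^+ = E_+^{0,\nu}$, which is exactly the claimed chain of equalities.

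For the final statement, assume in addition that $M^{-1}A$ is $B$-orthogonal. Then $(M^{-1}A)^+ = M^{-1}A$, so $T^+ = T$ and $E_+^{\nu_1,\nu_2} = E^{\nu_1,\nu_2}$ for all exponents. All four properties therefore transfer verbatim to $E^{\nu,\nu}$, $E^{\nu,0}$ and $E^{0,\nu}$.

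I expect no real obstacle. The only points needing care are that the splitting relies on the idempotency of $I-\Pi_A$, and that $I^+ = I$ and the adjoint rules hold for powers, which follows by induction from the product rule.
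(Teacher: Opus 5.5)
Your proposal is correct and follows essentially the paper's argument: the $B$-adjoint product rules together with $(I-\Pi_A)^+ = I-\Pi_A$ give 1)--3), and the splitting $E_+^{\nu,\nu} = E_+^{0,\nu}E_+^{\nu,0}$ combined with \cref{prop:B-norm lambda_max} gives 4). The only cosmetic difference is that you derive 1) from 2) by writing $E_+^{\nu,\nu} = E^+E$ with $E = E_+^{\nu,0}$, whereas the paper verifies 1) by taking the adjoint of the three-factor product directly.
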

\begin{proof}
    Let $\Pi_A$ be $B$-orthogonal. \emph{1)} and \emph{2)} both follow from the properties of the $B$-adjoint. Since $\Pi_A$ is $B$-orthogonal, it holds $(I-\Pi_A)^+ = (I-\Pi_A)$ and thus
    \begin{align*}
        (E_+^{\nu,\nu})^+ &= \left((I-(M^{-1}A)^+)^\nu(I-\Pi_A)(I-M^{-1}A)^\nu \right)^+ 
        \\ &= (I-(M^{-1}A)^+)^\nu(I-\Pi_A)^+(I-M^{-1}A)^\nu = E_+^{\nu,\nu}
    \end{align*}
    and
    \begin{align*}
        (E_+^{\nu,0})^+ &= \left((I-\Pi_A)(I-M^{-1}A)^\nu \right)^+ = (I-(M^{-1}A)^+)^\nu(I-\Pi_A^+) = E_+^{0,\nu}.
    \end{align*}
    Taking the $B$-adjoint on both sides yields the second equation of \emph{2)}. \emph{3)} is then a reformulation of \emph{2)}. And for \emph{4)} we use \emph{2)} and the fact that $E_+^{\nu,\nu} = E_+^{0,\nu} \cdot E_+^{\nu,0}$ to obtain
    \begin{displaymath}
        \norm{E_+^{\nu,\nu}}_B = \norm{E_+^{0,\nu} E_+^{\nu,0}}_B = \norm{E_+^{0,\nu} (E_+^{0,\nu})^+}_B = \norm{E_+^{0,\nu}}_B^2
    \end{displaymath}
    with \cref{prop:B-norm lambda_max}. An analogous calculation shows $\norm{E_+^{\nu,\nu}}_B = \norm{E_+^{\nu,0}}_B^2$. If additionally $M^{-1}A$ is $B$-orthogonal, the operators $E_+^{\nu,\nu}$ and $E^{\nu,\nu}$ coincide and thus \emph{1)} to \emph{4)} also hold for $E^{\nu,\nu}$. 
\end{proof}

\subsection{Smoothing and generalized eigenvalue  problems}

Our convergence analysis and in particular our optimality results depend on eigenvectors and eigenvalues of certain (generalized) eigenvalue problems. The first eigenvalue problem is given by
\begin{equation}\label{eq:eig AM}
    M^{-1}Az = \lambda z. 
\end{equation} 
For the second eigenvalue problem we define the following matrices
\begin{align}
    \widetilde{M}^{-1} &= M^{-1}AB^{-1} + B^{-1}A^HM^{-H} - B^{-1}A^HM^{-H}BM^{-1}AB^{-1}, \label{eq:tildeMinv}
    \\ \widehat{M}^{-1} &= M^{-1}AB^{-1} + B^{-1}A^HM^{-H}- M^{-1}AB^{-1}A^HM^{-H} \label{eq:hatMinv}
\end{align}
which satisfy
\begin{align}
    \widetilde M^{-1}B &= M^{-1}A + (M^{-1}A)^+ - (M^{-1}A)^+ M^{-1}A, \label{eq:tildeMinvB}
    \\ \hatMinvB &=  M^{-1}A + (M^{-1}A)^+ - M^{-1}A (M^{-1}A)^+ \label{eq:hatMinvB}
\end{align}
and thus
\begin{align*}
    I-\tildeMinvB &= (I-M^{-1}A)^+(I-M^{-1}A),
    \\ I- \hatMinvB &= (I-M^{-1}A)(I-M^{-1}A)^+.
\end{align*}
Note that  $\widetilde{M}^{-1} $ and $\widehat{M}^{-1} $ are Hermitian. Then we consider the  second eigenvalue problem 
\begin{equation}\label{eq:eig hatMinvB}
    \hatMinvB z = \mu z.
\end{equation}
Upon first sight $\widehat M^{-1}$ and $\widetilde M^{-1}$ look complicated but their structure becomes apparent for $B=A$ HPD. Then 
\begin{align*}
    \widetilde M^{-1} &= M^{-1} + M^{-H} - M^{-H}AM^{-1} = M^{-H}(M + M^H - A)M^{-1}, \\
    \widehat M^{-1} &= M^{-1} + M^{-H} - M^{-H}AM^{-1} = M^{-1}(M + M^H - A)M^{-H}
\end{align*}
and 
\begin{align*}
    \widetilde M &=  M(M+M^H-A)^{-1}M^H, \\
    \widehat M &= M^H(M+M^H-A)^{-1}M.
    \end{align*}
These matrices appear quite often in the AMG analysis of the HPD case, e.g. as symmetrized smoother, see \cite{XuZik2017, Vas2008, MacOls2014}. Moreover, they are used to determine optimal restriction and prolongation operators \cite{BraCaoKahFalHu2018, XuZik2017, GarNab2019}.

The matrices $\tildeMinvB$ and $\hatMinvB$ have the same structure and differ only in the order of the last term. Their properties are summarized in the following result.
\begin{proposition} \label{prop:properties tildeMinvB hatMinvB}
    Let $\widetilde{M}^{-1} $ and $\widehat{M}^{-1} $ be defined as in \cref{eq:tildeMinv} and \cref{eq:hatMinv}. Then $\tildeMinvB$ and $\hatMinvB$ are $B$-orthogonal and have real eigenvalues. 
    If $M^{-1}A$ is additionally $B$-normal it holds $\widetilde M^{-1} = \widehat M^{-1}$ and $\tildeMinvB = \hatMinvB$. 
\end{proposition}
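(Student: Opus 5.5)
The plan is to work directly with the representations \cref{eq:tildeMinvB} and \cref{eq:hatMinvB} and use only the algebraic rules for the $B$-adjoint: additivity, $(XY)^+=Y^+X^+$, and $(X^+)^+=X$. Write $T := M^{-1}A$ for brevity. Then
\begin{displaymath}
    \tildeMinvB = T + T^+ - T^+T, \qquad \hatMinvB = T + T^+ - TT^+.
\end{displaymath}

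\textbf{$B$-orthogonality.} Taking the $B$-adjoint of the first expression gives
\begin{displaymath}
    (\tildeMinvB)^+ = T^+ + (T^+)^+ - T^+(T^+)^+ = T^+ + T - T^+T = \tildeMinvB .
\end{displaymath}
The second case is identical: $(TT^+)^+ = (T^+)^+T^+ = TT^+$. Hence both matrices are $B$-orthogonal in the sense of \cref{def:B-adjoint B-normal B-ortho}.

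Alternatively, one can argue from the identities $I-\tildeMinvB=(I-T)^+(I-T)$ and $I-\hatMinvB=(I-T)(I-T)^+$. Products of the form $X^+X$ and $XX^+$ are self-adjoint, and the identity is $B$-orthogonal. I would mention this as a cross-check.

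\textbf{Real eigenvalues.} A $B$-orthogonal matrix is $B$-normal, so \cref{prop:eigB-ortho} applies directly and gives real spectra for both matrices. Alternatively, one can note that $\widetilde M^{-1}$ and $\widehat M^{-1}$ are Hermitian, so $\tildeMinvB$ is similar to the Hermitian matrix $B^{1/2}\widetilde M^{-1}B^{1/2}$. Either route suffices.

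\textbf{The $B$-normal case.} If $T$ is $B$-normal, then $TT^+=T^+T$ by definition. Subtracting the two displayed expressions gives
\begin{displaymath}
    \tildeMinvB - \hatMinvB = TT^+ - T^+T = 0 .
\end{displaymath}
Multiplying on the right by $B^{-1}$, which is legitimate since $B$ is HPD and hence invertible, yields $\widetilde M^{-1}=\widehat M^{-1}$.

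None of these steps is a genuine obstacle. The only point needing care is confirming that \cref{eq:tildeMinvB} and \cref{eq:hatMinvB} really follow from \cref{eq:tildeMinv} and \cref{eq:hatMinv}, that is, that $B^{-1}A^HM^{-H}B = T^+$. This holds by definition of the $B$-adjoint.
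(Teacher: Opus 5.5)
Your proposal is correct and follows the paper's argument: the $B$-adjoint rules show that $\tildeMinvB$ and $\hatMinvB$ equal their own $B$-adjoints, \cref{prop:eigB-ortho} then gives real eigenvalues, and $B$-normality of $M^{-1}A$ makes the two expressions coincide, after which cancelling $B$ gives $\widetilde M^{-1}=\widehat M^{-1}$. Your explicit check that $B^{-1}A^HM^{-H}B=(M^{-1}A)^+$, which links \cref{eq:tildeMinv} and \cref{eq:hatMinv} to \cref{eq:tildeMinvB} and \cref{eq:hatMinvB}, is a detail the paper leaves implicit.
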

\begin{proof}
    It is obvious that 
    \begin{displaymath}
        (\tildeMinvB)^+ = \tildeMinvB \quad \text{and} \quad (\hatMinvB)^+ = \hatMinvB,
    \end{displaymath}
    hence $\hatMinvB$ and $\tildeMinvB$ are $B$-orthogonal. Then by \cref{prop:eigB-ortho} their eigenvalues are real. If $M^{-1}A$ is additionally $B$-normal, it commutes with $(M^{-1}A)^+$ and we have $\tildeMinvB = \hatMinvB$ and therefore also $\widetilde M^{-1} = \widehat M^{-1}$.
\end{proof}

Clearly, if $M^{-1}A$ is $B$-orthogonal, it also holds $\widetilde M^{-1} = \widehat M^{-1}$, but \cref{prop:properties tildeMinvB hatMinvB} shows that $B$-normality is already sufficient to obtain this equality. 

One goal of theoretical two-grid analysis is to estimate the convergence speed of the method by bounding the norm of the error propagation matrix. If $A$ is HPD, the convergence is typically measured in the $A$-norm, also known as the energy norm. Here, $A$ is in general nonsymmetric and indefinite and therefore we use an arbitrary $B$-norm. A simple approach to estimate such a $B$-norm is to investigate and bound the smoothing steps and the coarse-grid correction individually. By the consistency of the $B$-norm it holds
\begin{displaymath}
    \norm{\Esouth}_B \leq \norm{I-M^{-1}A}_B^{\nu_1 + \nu_2} \norm{I-\Pi_A}_B
\end{displaymath}
and for $\norm{\Eplus}_B$ analogously. A two-grid method is called convergent if the norm of the corresponding error operator is less than one, that is $\norm{\Esouth}_B <1$ or $\norm{\Eplus}_B <1$. 

By \cref{lem:norm proj} we already know that $\norm{I-\Pi_A}_B$ attains its minimal value one if and only if $\Pi_A$ is $B$-orthogonal. Therefore, one of our goals is to find restriction and interpolation operators $R$ and $P$ such that $\Pi_A(P,R)$ is $B$-orthogonal for some HPD $B$. 

Besides the $B$-orthogonality of the coarse-grid correction, we require some additional assumption for the smoothing to obtain convergence results. This condition is given by the usual smoothing assumption but now in the $B$-norm.
\begin{definition}
    We say that the smoothing assumption is satisfied if 
    \begin{displaymath}
        \norm{I-M^{-1}A}_B < 1.
    \end{displaymath}
\end{definition}
We can characterize this property with the following theorem which was introduced and partly proven in \cite{BatNab2025}.

\begin{theorem}\label{thm:smoothing_assumpt_BatNab}
Let $\widetilde M^{-1}$ and $\widehat M^{-1}$ be defined as in \cref{eq:tildeMinv} and \cref{eq:hatMinv}, respectively. Then the following statements are equivalent:
\begin{enumerate}[(1)]
    \item $\lVert I - M^{-1}A\rVert_B < 1$.
    \item $\widetilde{M}^{-1}$ is HPD.
    \item $\sigma(\widetilde{M}^{-1}B) \subseteq (0,1] $.
    \item $\widehat M^{-1}$ is HPD.
    \item $\sigma(\hatMinvB) \subseteq (0,1]$.
\end{enumerate}
\end{theorem}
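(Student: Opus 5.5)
The whole argument will run through the identities $I-\widetilde M^{-1}B = (I-M^{-1}A)^+(I-M^{-1}A)$ and $I-\widehat M^{-1}B = (I-M^{-1}A)(I-M^{-1}A)^+$ stated just before the theorem, together with \cref{prop:B-norm lambda_max}. Write $S := I-M^{-1}A$. Then
\begin{displaymath}
    \norm{S}_B^2 = \lambda_{max}(S^+S) = \lambda_{max}(SS^+).
\end{displaymath}
Both $S^+S$ and $SS^+$ are $B$-orthogonal. They are also positive semidefinite in the $B$-inner product, since $\<S^+Sx,x\>_B = \norm{Sx}_B^2 \ge 0$, and similarly for $SS^+$. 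Hence their spectra lie in $[0,\infty)$. Equivalently, after conjugation by $B^{\frac12}$ they become the Hermitian positive semidefinite matrices $\widetilde S^H\widetilde S$ and $\widetilde S\widetilde S^H$, where $\widetilde S = B^{\frac12}SB^{-\frac12}$.

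To prove (1)$\Leftrightarrow$(3), note that $\sigma(\widetilde M^{-1}B) = \{1-\tau : \tau\in\sigma(S^+S)\}$. Since $\sigma(S^+S)\subseteq[0,\infty)$, we get $\sigma(\widetilde M^{-1}B)\subseteq(-\infty,1]$ automatically. The spectrum lies in $(0,1]$ exactly when every $\tau<1$, that is, when $\lambda_{max}(S^+S)<1$, which is $\norm{S}_B<1$. The equivalence (1)$\Leftrightarrow$(5) is identical, using $SS^+$ and the second identity.

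For (2)$\Leftrightarrow$(3), I use that $\widetilde M^{-1}$ is Hermitian, which is visible from \cref{eq:tildeMinv}. The matrix $\widetilde M^{-1}B$ is similar to $B^{\frac12}\widetilde M^{-1}B^{\frac12}$, which is Hermitian and congruent to $\widetilde M^{-1}$. By Sylvester's law of inertia, $\widetilde M^{-1}$ is HPD if and only if all eigenvalues of $\widetilde M^{-1}B$ are positive. Combined with the automatic upper bound $1$ from the previous paragraph, this is exactly (3). The same argument gives (4)$\Leftrightarrow$(5) for $\widehat M^{-1}$.

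The only point needing care is the automatic upper bound $\sigma\subseteq(-\infty,1]$, which comes from the positive semidefiniteness of $S^+S$ and $SS^+$. Without it, "HPD" would only give positivity and not the interval $(0,1]$. Everything else is routine linear algebra.
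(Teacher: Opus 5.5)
Your proof is correct and uses the same ingredients as the paper's: the identities $I-\widetilde M^{-1}B = S^+S$ and $I-\widehat M^{-1}B = SS^+$, their positive semidefiniteness via conjugation by $B^{\frac12}$ to $\widetilde S^H\widetilde S$ and $\widetilde S\widetilde S^H$, the relation $\norm{S}_B^2=\lambda_{max}(S^+S)=\lambda_{max}(SS^+)$, and the similarity of $\widetilde M^{-1}B$ to the Hermitian matrix $B^{\frac12}\widetilde M^{-1}B^{\frac12}$. The difference is only organizational: the paper takes (1)$\Leftrightarrow$(2) from earlier work and runs the cycle (1)$\Rightarrow$(5)$\Rightarrow$(4)$\Rightarrow$(1), whereas you prove (2)$\Leftrightarrow$(3) and (4)$\Leftrightarrow$(5) directly by congruence, which makes your argument self-contained and symmetric in $\widetilde M$ and $\widehat M$.
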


\begin{proof}
    The proof in \cite{BatNab2025} shows that the first two statements are equivalent. 
    
    $\emph{(1)} \Rightarrow \emph{(3)}$: Let $\norm{I-M^{-1}A}_B < 1$. Then $\widetilde{M}^{-1}$ is HPD and so is $B^{\frac12}\widetilde{M}^{-1} B^{\frac12}$. By similarity we get $\sigma(\widetilde{M}^{-1}B) \subseteq (0, \infty)$. Further, it holds
    \begin{align}\label{eq:lambda max I-tildeMinvB}
        \lambda_{max}(I-B^{\frac12}\widetilde{M}^{-1}B^{\frac12}) &= \lambda_{max}((I-B^{\frac12}M^{-1}AB^{-\frac12})^H(I-B^{\frac12}M^{-1}AB^{-\frac12})) \nonumber
        \\ &= \lambda_{max}((I-M^{-1}A)^+(I-M^{-1}A)) = \norm{I-M^{-1}A}_B^2 < 1.
    \end{align}
    But, due to its structure the matrix $I-B^{\frac12}\widetilde{M}^{-1}B^{\frac12}$ is Hermitian positive semidefinite (HPSD) and thus has real nonnegative eigenvalues. Therefore, it holds $\sigma(I-B^{\frac12}\widetilde{M}^{-1}B^{\frac12}) \subseteq [0,\infty)$ and hence $\sigma(\hatMinvB) = \sigma(B^{\frac12}\widetilde{M}^{-1}B^{\frac12}) \subseteq (0,1]$.

    $\emph{(3)} \Rightarrow \emph{(1)}$: Let $\sigma(\hatMinvB) \subseteq (0,1]$. Then $\sigma(I-B^{\frac12}\widetilde{M}^{-1}B^{\frac12}) = \sigma(I-\tildeMinvB) \subseteq [0,1) $. With \cref{eq:lambda max I-tildeMinvB} we obtain $\norm{I-M^{-1}A}_B^2 = \lambda_{max}(I-B^{\frac12}\widetilde{M}^{-1}B^{\frac12}) < 1$.

    $\emph{(1)} \Rightarrow \emph{(5)}$: The proof is analogous to $\emph{(1)} \Rightarrow \emph{(3)}$ and follows by using 
    \begin{displaymath}
        \lambda_{max}((I-M^{-1}A)^+(I-M^{-1}A)) = \lambda_{max}((I-M^{-1}A)(I-M^{-1}A)^+)
    \end{displaymath}
    and replacing $\widetilde M^{-1}$ with $\widehat M^{-1}$.

    $\emph{(5)} \Rightarrow \emph{(4)}$: Let $\sigma(\hatMinvB) \subseteq (0,1]$. By its structure 
    \begin{displaymath}
        B^{\frac12}\widehat{M}^{-1} B^{\frac12} = (I-B^{\frac12}M^{-1}AB^{-\frac12})(I-B^{\frac12}M^{-1}AB^{-\frac12})^H
    \end{displaymath}
    is HPSD and with $\sigma(B^{\frac12}\widehat{M}^{-1} B^{\frac12}) = \sigma(\hatMinvB) \subseteq (0,1]$ it must be HPD. Then $\widehat M^{-1}$ is also HPD.

    $\emph{(4)} \Rightarrow \emph{(1)}$: Assume that $\widehat M^{-1}$ is HPD. Then $B^{\frac12}\widehat{M}^{-1} B^{\frac12}$ is also HPD and thus has positive real eigenvalues. Then every eigenvalue of $I - B^{\frac12}\widehat{M}^{-1} B^{\frac12}$ must be smaller than $1$ and with a similar argument as \cref{eq:lambda max I-tildeMinvB} we obtain $\emph{(1)}$.
\end{proof}

The smoothing assumption can be characterized even further if we add the assumption that $M^{-1}A$ is $B$-normal.

\begin{corollary}\label{cor:equiv_smoothing_assump}
    Let $\widetilde M^{-1}$ and $\widehat M^{-1}$ be defined as in \cref{eq:tildeMinv} and \cref{eq:hatMinv}, respectively. Let $M^{-1}A$ be $B$-normal. Then the following statements are equivalent:
    \begin{enumerate}[(1)]
        \item $\lVert I - M^{-1}A\rVert_B < 1$.
        \item $\widetilde{M}^{-1} = \widehat M^{-1}$ is HPD.
        \item $\sigma(\widetilde{M}^{-1}B) = \sigma(\hatMinvB) \subseteq (0,1] $.
        \item $\abs{\lambda - 1}^2 < 1$ for all eigenvalues $\lambda$ of $M^{-1}A$.
        \item $\rho(I-M^{-1}A) < 1$
    \end{enumerate}
\end{corollary}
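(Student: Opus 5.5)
The plan is to reduce the corollary to \cref{thm:smoothing_assumpt_BatNab} and to \cref{prop:B-normal_spectral_radius}, applied to the matrix $I-M^{-1}A$.

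First, since $M^{-1}A$ is $B$-normal, \cref{prop:properties tildeMinvB hatMinvB} gives $\widetilde M^{-1} = \widehat M^{-1}$ and $\tildeMinvB = \hatMinvB$. With these identities, statements (2) and (3) of the corollary are exactly statements (2)/(4) and (3)/(5) of \cref{thm:smoothing_assumpt_BatNab}, merged into single conditions. Hence (1) $\Leftrightarrow$ (2) $\Leftrightarrow$ (3) follows immediately from that theorem.

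Second, I will show that $I-M^{-1}A$ inherits $B$-normality from $M^{-1}A$. By the linearity of the $B$-adjoint we have $(I-M^{-1}A)^+ = I-(M^{-1}A)^+$. The product of $I-M^{-1}A$ and $I-(M^{-1}A)^+$ in either order expands to
\begin{displaymath}
I - M^{-1}A - (M^{-1}A)^+ + M^{-1}A\,(M^{-1}A)^+ \quad\text{or}\quad I - M^{-1}A - (M^{-1}A)^+ + (M^{-1}A)^+M^{-1}A .
\end{displaymath}
These agree because $M^{-1}A$ commutes with its $B$-adjoint, so $I-M^{-1}A$ is $B$-normal. Then \cref{prop:B-normal_spectral_radius} gives $\norm{I-M^{-1}A}_B = \rho(I-M^{-1}A)$, which yields (1) $\Leftrightarrow$ (5).

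Third, the eigenvalues of $I-M^{-1}A$ are exactly $1-\lambda$ for $\lambda\in\sigma(M^{-1}A)$. Therefore $\rho(I-M^{-1}A)<1$ holds if and only if $\abs{1-\lambda}<1$ for all such $\lambda$, which is equivalent to $\abs{\lambda-1}^2<1$. This gives (4) $\Leftrightarrow$ (5) and closes the chain of equivalences.

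There is no real obstacle. The only step needing care is confirming that $B$-normality transfers to $I-M^{-1}A$, which the expansion above handles.
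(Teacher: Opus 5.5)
Your proof is correct, but it attaches statement (4) to the chain differently from the paper. The first two links match the paper: (1)$\Leftrightarrow$(2)$\Leftrightarrow$(3) from \cref{prop:properties tildeMinvB hatMinvB} and \cref{thm:smoothing_assumpt_BatNab}, and (1)$\Leftrightarrow$(5) from \cref{prop:B-normal_spectral_radius}. The paper, however, links (4) to (3), not to (5). By \cref{thm:B-normal_charact}~(\labelcref{thm:enum:eigenvector A Aplus}), an eigenvector $z$ of $M^{-1}A$ for $\lambda$ is an eigenvector of $(M^{-1}A)^+$ for $\lambdaquer$. It is therefore an eigenvector of $\tildeMinvB$ for $\lambda+\lambdaquer-\lambdaquer\lambda = 1-\abs{\lambda-1}^2$, and the paper then rewrites the condition $\mu\in(0,1]$ as $\abs{\lambda-1}^2<1$.

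Your route is more elementary. The equivalence (4)$\Leftrightarrow$(5) is just $\sigma(I-M^{-1}A)=\{1-\lambda : \lambda\in\sigma(M^{-1}A)\}$ and holds for every matrix. So $B$-normality is used only to identify $\widetilde M^{-1}=\widehat M^{-1}$ and to obtain $\norm{I-M^{-1}A}_B=\rho(I-M^{-1}A)$. You also show explicitly that $I-M^{-1}A$ inherits $B$-normality, which the paper's one-line appeal to \cref{prop:B-normal_spectral_radius} leaves tacit. In addition, you avoid the paper's ``converse is analogous'' step. That step silently needs the eigenvectors of $M^{-1}A$ to span $\Cn$, so that every eigenvalue of $\tildeMinvB$ has the form $1-\abs{\lambda-1}^2$.

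What the paper's route gives in exchange is the explicit eigenvalue correspondence $\mu=1-\abs{\lambda-1}^2$ between $M^{-1}A$ and $\tildeMinvB$. This is formalized in \cref{thm:charact_eigenvals_tildeMB} and used later to show that $(P_\#,R_\#)$ and $(\hat P,\hat R)$ agree up to a change of basis.
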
 
\begin{proof}
    Let $M^{-1}A$ be $B$-normal. Then $\widehat M^{-1}$ and $\widetilde M^{-1}$ coincide (see \cref{prop:properties tildeMinvB hatMinvB}) and the first three equivalences result from \cref{thm:smoothing_assumpt_BatNab} above. 
    
    $\emph{(3)} \Leftrightarrow \emph{(4)}$: Let $(\lambda, z)$ be an eigenpair of $M^{-1}A$. By \cref{thm:B-normal_charact}~(\labelcref{thm:enum:eigenvector A Aplus}), the eigenvector $z$ of $M^{-1}A$ to the eigenvalue $\lambda$ is also an eigenvector of $(M^{-1}A)^+$ to the eigenvalue $\overline{\lambda}$. Thus, we obtain
    \begin{align*}
        \tildeMinvB z &= (M^{-1}A + (M^{-1}A)^+ - (M^{-1}A)^+M^{-1}A)z = (\lambda + \lambdaquer - \lambdaquer \lambda) z
    \end{align*}
    with $0 < \lambda + \lambdaquer - \lambdaquer \lambda \leq 1$. By adding $-1$ on both sides and multiplying by $-1$ we get
    \begin{displaymath}
        0 \leq 1 - \lambda - \lambdaquer + \lambdaquer \lambda < 1
    \end{displaymath}
    which is equivalent to $0 \leq \abs{\lambda - 1}^2 < 1$. Because $\lambda$ was arbitrary, this holds for all eigenvalues $\lambda$ of $M^{-1}A$. The converse is analogous. 
    
    $\emph{(1)} \Leftrightarrow \emph{(5)}$: This follows from \cref{prop:B-normal_spectral_radius}.  
\end{proof}

Using this result we also obtain the following Corollary. 

\begin{corollary}
    Assume that the smoothing assumption is satisfied. Then the matrices $\tildeMinvB$ and $\hatMinvB$ as in \cref{eq:tildeMinvB} and \cref{eq:hatMinvB}, respectively, are nonsingular for any choice of nonsingular $M \in \Cnn$ such that $M^{-1}A$ is $B$-normal.
\end{corollary}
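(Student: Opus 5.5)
The plan is to read the corollary as a direct consequence of \cref{cor:equiv_smoothing_assump} (or of \cref{thm:smoothing_assumpt_BatNab}), so that nonsingularity reduces to excluding the eigenvalue zero. Fix any nonsingular $M$ such that $M^{-1}A$ is $B$-normal and such that the smoothing assumption $\norm{I-M^{-1}A}_B<1$ holds. By \cref{prop:properties tildeMinvB hatMinvB}, $B$-normality gives $\tildeMinvB = \hatMinvB$, so it suffices to treat a single matrix.

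First, I will invoke the equivalence (1)$\Leftrightarrow$(3) of \cref{cor:equiv_smoothing_assump}. It yields $\sigma(\tildeMinvB) = \sigma(\hatMinvB) \subseteq (0,1]$. In particular $0$ is not an eigenvalue, and hence $\tildeMinvB = \hatMinvB$ is nonsingular. An alternative route uses (2): $\widetilde M^{-1}$ is HPD, hence nonsingular, and $B$ is HPD, so the product $\widetilde M^{-1}B$ is nonsingular. To make the role of $B$-normality concrete, I will also give the eigenvalue picture. By \cref{thm:B-normal_charact}~(\labelcref{thm:enum:B-unitarily diag}), write $M^{-1}A = W\Lambda W^{-1}$ with $W$ $B$-unitary. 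Then $(M^{-1}A)^+ = W\Lambda^H W^{-1}$, and
\begin{displaymath}
    \tildeMinvB = W(\Lambda + \Lambda^H - \Lambda^H\Lambda)W^{-1}.
\end{displaymath}
The diagonal entries of the middle factor are $1-\abs{1-\lambda_i}^2$, which are strictly positive by item (4) of \cref{cor:equiv_smoothing_assump}.

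There is essentially no obstacle here. The only point needing care is that the hypothesis ``for any choice of $M$'' is read correctly: each admissible $M$ must satisfy both $B$-normality and the smoothing assumption. With that reading, the conclusion holds pointwise in $M$ by the argument above. I would also note that $B$-normality is not actually needed for nonsingularity, since \cref{thm:smoothing_assumpt_BatNab} already gives $\sigma(\tildeMinvB),\sigma(\hatMinvB)\subseteq(0,1]$. In this corollary, $B$-normality only serves to identify the two matrices.
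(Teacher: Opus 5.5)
Your proposal is correct and matches the paper, which states this corollary without a separate proof and obtains it directly from \cref{cor:equiv_smoothing_assump}: item (3) gives $\sigma(\tildeMinvB)=\sigma(\hatMinvB)\subseteq(0,1]$, so zero is not an eigenvalue. Your side remark is also right: \cref{thm:smoothing_assumpt_BatNab} alone already yields nonsingularity of both matrices under the smoothing assumption, so $B$-normality only serves to make the two matrices coincide.
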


With \cref{prop:B-normal_spectral_radius} we obtain the following.
\begin{corollary}\label{cor:spectral_radius smoother}
    Let $M^{-1}A$ be $B$-normal. Then $\norm{I-M^{-1}A}_B = \rho(I-M^{-1}A)$.
\end{corollary}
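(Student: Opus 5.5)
The statement to prove is \cref{cor:spectral_radius smoother}: if $M^{-1}A$ is $B$-normal, then $\norm{I-M^{-1}A}_B = \rho(I-M^{-1}A)$. The plan is to apply \cref{prop:B-normal_spectral_radius} directly to the matrix $C := I - M^{-1}A$. The only thing to check is that $C$ inherits $B$-normality from $M^{-1}A$.

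For this I use the algebraic rules for the $B$-adjoint stated after \cref{def:B-adjoint B-normal B-ortho}. Since $(A+C)^+ = A^+ + C^+$ and $I^+ = B^{-1} I B = I$, we get
\begin{displaymath}
    (I-M^{-1}A)^+ = I - (M^{-1}A)^+ .
\end{displaymath}
Writing $X := M^{-1}A$, we then expand both products:
\begin{displaymath}
    (I-X)(I-X^+) = I - X - X^+ + XX^+, \qquad (I-X^+)(I-X) = I - X - X^+ + X^+X .
\end{displaymath}
By the $B$-normality of $X$, i.e. $XX^+ = X^+X$, these two expressions coincide. Hence $I-M^{-1}A$ is $B$-normal.

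An alternative route goes through \cref{thm:B-normal_charact}~(\labelcref{thm:enum:B-unitarily diag}). If $M^{-1}A = U\Lambda U^{-1}$ with $U$ being $B$-unitary, then $I - M^{-1}A = U(I-\Lambda)U^{-1}$ is also $B$-unitarily diagonalizable, and hence $B$-normal.

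With $B$-normality of $I-M^{-1}A$ established, \cref{prop:B-normal_spectral_radius} gives $\norm{I-M^{-1}A}_B = \rho(I-M^{-1}A)$ immediately. There is no real obstacle here; the only step requiring care is the verification that the identity has $B$-adjoint $I$, so that $B$-normality passes to $I - M^{-1}A$.
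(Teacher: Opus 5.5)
Your proof is correct and is essentially the paper's argument, which obtains the corollary by applying \cref{prop:B-normal_spectral_radius} to $I-M^{-1}A$. The paper leaves implicit that $I-M^{-1}A$ inherits $B$-normality from $M^{-1}A$, and your check via $(I-X)^+ = I - X^+$ (or via the shared $B$-unitary diagonalization) supplies exactly that step.
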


\begin{remark}
    For the case $A=B$ HPD and $M$ Hermitian, this result is already known \cite[Lemma 2.11]{BenFroNabSzy2001}. Then it holds 
    \begin{equation}\label{eq:spectral_radius HPD case}
        \| I - M^{-1}A \|_A = \rho(I - M^{-1}A). 
    \end{equation}
    Given this assumptions we have $(M^{-1}A)^+ = M^{-H}A = M^{-1}A$ and $M^{-1}A$ is $B$-orthogonal, in particular also $B$-normal and we obtain \cref{eq:spectral_radius HPD case} from \cref{cor:spectral_radius smoother}.
\end{remark}

Recall that we consider the eigenvalue problems
\begin{displaymath}
    M^{-1}Az = \lambda z \quad \text{and} \quad \hatMinvB v = \mu v.
\end{displaymath}
If the smoothing assumption is satisfied, the matrix $\widehat M^{-1}$ is nonsingular and both eigenvalue problems can be equivalently rewritten as the generalized eigenvalues problems
\begin{align}
    Az &= \lambda Mz, \label{eq:gen eig val prob MinvA}
    \\ Bv &= \mu \widehat M v. \label{eq:gen eig val prob hatMinvB}
\end{align}
In \cite{AliBraKahKrzSchSou2025, KrzSouWimAliBraKah2025} the first generalized eigenvalue problem is considered. \cite{BatNab2025} studies the second (generalized) eigenvalue problem with $\widetilde M^{-1}$ rather than $\widehat M^{-1}$. However, we restrict ourselves to $\hatMinvB$ here, because the eigenvalue problems are the same if $M^{-1}A$ is $B$-normal. 

Moreover, if $M^{-1}A$ is $B$-normal, the proof of \cref{cor:equiv_smoothing_assump} suggests that there exists a relation between the eigenvalues of $M^{-1}A$ and $\hatMinvB$. This connection is formalized in the following result. 

\begin{theorem}\label{thm:charact_eigenvals_tildeMB}
    Let $M^{-1}A$ be $B$-normal and $(\lambda,z)$ be an eigenpair of $M^{-1}A$. Then $(\mu,z)$ with 
    \begin{displaymath}
        \mu = \lambda + \overline{\lambda} - \overline{\lambda}\lambda = 1- \abs{\lambda-1}^2
    \end{displaymath}
    is an eigenpair of $\hatMinvB = \tildeMinvB$. Furthermore, $M^{-1}A$ and $\hatMinvB = \tildeMinvB$ are simultaneously $B$-unitarily diagonalizable.
\end{theorem}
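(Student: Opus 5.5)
The plan is to reduce everything to \cref{thm:B-normal_charact}. Since $M^{-1}A$ is $B$-normal, \cref{prop:properties tildeMinvB hatMinvB} gives $\hatMinvB = \tildeMinvB$, so it suffices to work with one expression, say \cref{eq:tildeMinvB},
\begin{displaymath}
    \tildeMinvB = M^{-1}A + (M^{-1}A)^+ - (M^{-1}A)^+M^{-1}A .
\end{displaymath}

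For the eigenpair statement, let $(\lambda,z)$ be an eigenpair of $M^{-1}A$. By \cref{thm:B-normal_charact}~(\labelcref{thm:enum:eigenvector A Aplus}), $z$ is also an eigenvector of $(M^{-1}A)^+$ with eigenvalue $\lambdaquer$. Applying the three terms to $z$ gives
\begin{displaymath}
    \tildeMinvB z = (\lambda + \lambdaquer - \lambdaquer\lambda)z,
\end{displaymath}
exactly as in the proof of \cref{cor:equiv_smoothing_assump}. The identity
\begin{displaymath}
    \lambda + \lambdaquer - \lambdaquer\lambda = 1-(1-\lambda)(1-\lambdaquer) = 1-\abs{\lambda-1}^2
\end{displaymath}
is elementary. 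Since $z \neq 0$, $(\mu,z)$ is an eigenpair, and in particular $\mu$ is real, consistent with \cref{prop:properties tildeMinvB hatMinvB}.

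For simultaneous $B$-unitary diagonalization, use \cref{thm:B-normal_charact}~(\labelcref{thm:enum:B-unitarily diag}). It gives $M^{-1}A = U\Lambda U^{-1}$ with $U^HBU = I_n$, so $U^{-1} = U^HB$. Then compute
\begin{displaymath}
    (M^{-1}A)^+ = B^{-1}U^{-H}\Lambda^H U^H B = U\Lambda^H U^{-1},
\end{displaymath}
using $B^{-1}U^{-H} = U$ and $U^HB = U^{-1}$. Substituting into \cref{eq:tildeMinvB} yields
\begin{displaymath}
    \tildeMinvB = U(\Lambda + \Lambda^H - \Lambda^H\Lambda)U^{-1},
\end{displaymath}
and the middle factor is diagonal with entries $1-\abs{\lambda_i-1}^2$. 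Hence the same $B$-unitary $U$ diagonalizes both matrices.

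This second computation also reproves the eigenpair claim columnwise. The only point needing care, though it is not a real obstacle, is the repeated-eigenvalue case. An arbitrary eigenvector $z$ need not be a column of $U$, so for the first claim I would rely on characterization~(\labelcref{thm:enum:eigenvector A Aplus}) rather than on the columns of $U$. That argument applies to every eigenvector, not only to a chosen basis.
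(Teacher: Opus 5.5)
Your proof is correct and follows essentially the paper's route: take a $B$-unitary diagonalization $M^{-1}A = U\Lambda U^{-1}$, note $(M^{-1}A)^+ = U\Lambda^H U^{-1}$, and substitute to get $U(\Lambda+\Lambda^H-\Lambda^H\Lambda)U^{-1}$, with \cref{prop:properties tildeMinvB hatMinvB} supplying $\hatMinvB = \tildeMinvB$. Your separate appeal to \cref{thm:B-normal_charact}~(\labelcref{thm:enum:eigenvector A Aplus}) for an arbitrary eigenvector is a harmless tidy-up of the paper's last sentence, which relies implicitly on the fact that $\mu$ depends only on $\lambda$.
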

\begin{proof}
    By \cref{thm:B-normal_charact}~(\labelcref{thm:enum:B-unitarily diag}) the matrix $M^{-1}A$ is $B$-unitarily diagonalizable, that is $M^{-1}A = W\Lambda W^{-1}$ for a $B$-unitary matrix $W \in \Cnn$ and diagonal matrix $\Lambda \in \Cnn$ containing the eigenvalues. By \cref{thm:B-normal_charact}~(\labelcref{thm:enum:eigenvector A Aplus}) the matrix $(M^{-1}A)^+$ is then also diagonalizable with eigenvectors $W$ and eigenvalues $\Lambda^H$, i.e. $(M^{-1}A)^+ = W\Lambda^HW^{-1}$. Thus we have
    \begin{align*}
        \hatMinvB &= M^{-1}A + (M^{-1}A)^+ -  M^{-1}A(M^{-1}A)^+ = W(\Lambda + \Lambda^H - \Lambda \Lambda^H)W^{-1}.
    \end{align*}
    By \cref{prop:properties tildeMinvB hatMinvB} it holds $\hatMinvB = \tildeMinvB$. Hence, $M^{-1}A$ and $\hatMinvB = \tildeMinvB$ are simultaneously $B$-unitarily diagonalizable and each eigenpair $(\lambda, z)$ of $M^{-1}A$ is an eigenpair $(\mu, z)$ of $\hatMinvB$ with $\mu = \lambda + \lambdaquer - \lambda \lambdaquer = 1- \abs{\lambda -1}^2$.
\end{proof}

Under the above assumptions, for $\sigma(M^{-1}A) = \{\lambda_1, \dots, \lambda_n\}$ we obtain 
\begin{displaymath}
    \sigma(\hatMinvB) = \{1 - \abs{\lambda_1 - 1}^2, \dots, 1- \abs{\lambda_n-1}^2\}.
\end{displaymath}
The function $f(z) = 1-\abs{z-1}^2$ that describes the eigenvalues of $\hatMinvB$ is maximal if and only if $\abs{z-1}^2= 0$ which is fulfilled for $z=1$. The zeros of $f$ are given by all $z \in \C$ with $\abs{z-1} = 1$.

In the HPD case Lemma \ref{thm:charact_eigenvals_tildeMB} gives the relation between the eigenpairs of $M^{-1}A$ (smoother times system matrix) and 
$\widetilde{M}^{-1}A$ (symmetrized smoother times system matrix) if $M^{-1}A$ is $A$-normal. This condition is fulfilled if $M$ is Hermitian. Thus, if $A$ is HPD and $M$ is Hermitian we do have the relation between the eigenvalue problems, in particular the eigenvectors are the same. But in general these eigenvalue problems are different.

The next auxiliary result connects the eigenvectors of $\hatMinvB$ to the $B$-orthogonality of $\Pi_A$.

\begin{lemma}\label{lem:range P kernel R B-orthogonal}
    Let $V= \m{v_1 & \cdots & v_n}$ be the $B$-unitary eigenvectors of $\hatMinvB$. Consider the following statements:
    \begin{enumerate}[(1)]
        \item $\calR(P) = \calR(\m{v_1 & \cdots & v_{n_c}})$.
        \item $\calN(R^HA) = \calR(\m{v_{n_c+1} & \cdots & v_n})$.
        \item $\Pi_A(P,R)$ is $B$-orthogonal.
    \end{enumerate}
    Any two of the three conditions imply the third statement. 
\end{lemma}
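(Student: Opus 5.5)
Write $V_1 = \m{v_1 & \cdots & v_{n_c}}$ and $V_2 = \m{v_{n_c+1} & \cdots & v_n}$. The whole argument rests on one observation: since $V$ is $B$-unitary, $V^HBV = I_n$, so $V_1^H B V_2 = 0$. Counting dimensions then gives
\begin{displaymath}
    \calR(V_1)^{\bot_B} = \calR(V_2) \quad \text{and} \quad \calR(V_2)^{\bot_B} = \calR(V_1).
\end{displaymath}
Indeed, $\calR(V_2) \subseteq \calR(V_1)^{\bot_B}$, and both spaces have dimension $n-n_c$ because $V$ is nonsingular. The eigenvector property of $V$ is never used; only $B$-unitarity matters.

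We also use a standing fact: $\Pi_A(P,R)$ is a projection with $\calR(\Pi_A) = \calR(P)$ and $\calN(\Pi_A) = \calN(R^HA)$. Moreover $\dim\calR(P) = n_c$ and $\dim \calN(R^HA) = n-n_c$, since $R^HAP$ is nonsingular, $R$ and $P$ have full rank, and $A$ is nonsingular. We then treat the three cases separately.

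\emph{(1) and (2) imply (3).} By the observation, $\calR(\Pi_A)^{\bot_B} = \calR(V_1)^{\bot_B} = \calR(V_2) = \calN(\Pi_A)$. This is condition (3) of \cref{lem:equiv_cond Pi B-ortho}, so $\Pi_A$ is $B$-orthogonal.

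\emph{(1) and (3) imply (2).} By \cref{lem:equiv_cond Pi B-ortho}, $B$-orthogonality gives $\calN(R^HA) = \calN(\Pi_A) = \calR(\Pi_A)^{\bot_B} = \calR(P)^{\bot_B}$. By (1) this equals $\calR(V_1)^{\bot_B} = \calR(V_2)$.

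\emph{(2) and (3) imply (1).} Taking $B$-orthogonal complements of $\calR(P)^{\bot_B} = \calN(R^HA) = \calR(V_2)$ gives $\calR(P) = \calR(V_2)^{\bot_B} = \calR(V_1)$. Here we use $(\mathcal U^{\bot_B})^{\bot_B} = \mathcal U$, which holds in finite dimensions for the nondegenerate $B$-inner product.

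The only step needing care is the complement identity, that is, checking that the inclusion is an equality via the dimension count. This is routine, so I expect no real obstacle.
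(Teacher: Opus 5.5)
Your proof is correct and follows essentially the paper's route. Like you, the paper uses only $V^HBV = I_n$ together with the range/kernel criteria of \cref{lem:equiv_cond Pi B-ortho}; its condition $\calN(R^HA) = \calN(P^HB)$ is your $\calN(\Pi_A) = \calR(P)^{\bot_B}$. The difference is presentation. The paper works in coordinates: it writes $P = V\bigl[\begin{smallmatrix} I_{n_c}\\ 0\end{smallmatrix}\bigr]Z$, expands vectors in the basis $V$, and for (1)$\wedge$(2)$\Rightarrow$(3) checks $\Pi_A^+ = \Pi_A$ directly. You package all three implications into the single identity $\calR(V_1)^{\bot_B} = \calR(V_2)$. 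The paper's computation also yields the explicit form $\Pi_A = V\bigl[\begin{smallmatrix} I_{n_c} & 0\\ 0 & 0\end{smallmatrix}\bigr]V^{-1}$, which it reuses in \cref{thm:opti_Bnorm_Eplus_BatNab}; this follows at once from your identification of range and kernel.
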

\begin{proof}
    $\emph{(1)} \land \emph{(2)} \Rightarrow \emph{(3)}$: Assume that $\emph{(1)}$ and $\emph{(2)}$ hold, that is $P = V  \big[ \begin{smallmatrix} I_{n_c} \\ 0 \end{smallmatrix} \big] Z$ for some nonsingular $Z \in \Cncnc$ and $R^HAV  \big[ \begin{smallmatrix} 0 \\ I_{n_f} \end{smallmatrix} \big] S = 0$ for all nonsingular $S \in \Cncnc$. It holds
    \begin{align}\label{eq:Pi_A B-unitary diag calc}
        \Pi_A(P,R) &= P(R^HAP)^{-1}R^HA = V  \m{I_{n_c} \\ 0} Z \left( R^HAV  \m{I_{n_c} \\ 0} Z\right)^{-1} R^HAVV^{-1} \nonumber
        \\ &= V  \m{I_{n_c} \\ 0} \left(R^HAV \m{I_{n_c} \\ 0} \right)^{-1}\m{R^HAV \m{I_{n_c} \\ 0} & 0} V^{-1}
        \\ &= V \m{I_{n_c} \\ 0}  \m{I_{n_c} & 0}V^{-1} = V  \m{I_{n_c} & 0 \\ 0 & 0 } V^{-1}. \nonumber
    \end{align}
    Since $V$ is $B$-unitary, it holds $V^HBV = I_n$ and thus $B = V^{-H}V^{-1}$. Hence, we get
    \begin{align*}
        \Pi_A(P,R)^+ = B^{-1}\Pi_A(P,R)^HB = VV^HV^{-H}\m{I_{n_c} & 0 \\ 0 & 0 }V^HV^{-H}V^{-1} = \Pi_A(P,R)
    \end{align*}
    and $\Pi_A(P,R)$ is $B$-orthogonal. 

    $\emph{(1)} \land \emph{(3)} \Rightarrow \emph{(2)}$: Assume that $\emph{(1)}$ and $\emph{(3)}$ are true. By \cref{lem:equiv_cond Pi B-ortho} it holds 
    \begin{displaymath}
        \calN(R^HA) = \calN(P^HB) = \calN(\m{I_{n_c} & 0}V^HB).
    \end{displaymath}
    Writing any $x \in \Cn$ as $x = \sum_{i=1}^n \alpha_i v_i$ with coefficients $\alpha_i \in \C$, we obtain
    \begin{displaymath}
        \m{I_{n_c} & 0}V^HBx = \m{I_{n_c} & 0} \sum_{i=1}^n \alpha_i V^HBv_i = \m{I_{n_c} & 0} \sum_{i=1}^n \alpha_i e_i 
        = \m{\alpha_1 \\ \vdots \\ \alpha_{n_c} \\ 0}.
    \end{displaymath}
    Thus $x \in \calN(\m{I_{n_c} & 0}V^HB)$ if and only if $\alpha_1 = \dots = \alpha_{n_c} = 0$ and hence 
    \begin{displaymath}
        \calN(R^HA) = \calN(\m{I_{n_c} & 0}V^HB) = \calR(\m{v_{n_c+1} & \dots & v_n}).
    \end{displaymath}

    $\emph{(2)} \land \emph{(3)} \Rightarrow \emph{(1)}$: Assume that $\emph{(2)}$ and $\emph{(3)}$ are true. By \cref{lem:equiv_cond Pi B-ortho} it holds 
    \begin{displaymath}
        \calN(P^HB) = \calN(R^HA) = \calR(\m{v_{n_c+1} & \dots & v_n}).
    \end{displaymath}
    Since $V$ is nonsingular we can write $P = V S$ for some matrix $S = \big[ \begin{smallmatrix} S_{n_c} \\ \tilde S \end{smallmatrix} \big] \in \Cnnc$ with full rank $n_c$ where $S_{n_c} \in \Cncnc$ and $\tilde S \in \C^{n_f \times n_c}$. Then we get
    \begin{align*}
        0 = P^HB \m{v_{n_c+1} & \cdots & v_n} = S^HV^HBV \m{0 \\ I_{n_f}} = \m{S_{n_c}^H & \tilde S^H} \m{0 \\ I_{n_f}} = \tilde S^H.
    \end{align*}
    Therefore, $S_{n_c}$ must be nonsingular and 
    \begin{displaymath}
        \calR(P) = \calR(VS) = \calR(V\big[ \begin{smallmatrix} I_{n_c} \\ 0 \end{smallmatrix} \big] S_{n_c}) = \calR(\m{v_1 & \cdots & v_{n_c}}).
    \end{displaymath}
    
\end{proof}

\section{Optimal transfer operators}\label{sec:two-grid conv}
Recall the two error propagation matrices
\begin{align*}
    E_+^{\nu_1, \nu_2} &= (I-(M^{-1}A)^+)^{\nu_2}(I-P(R^HAP)^{-1}R^HA)(I-M^{-1}A)^{\nu_1}, \tag{\ref{eq:Eplus} revisited}
    \\ E^{\nu_1, \nu_2} &= (I-M^{-1}A)^{\nu_2}(I-P(R^HAP)^{-1}R^HA)(I-M^{-1}A)^{\nu_1}. \tag{\ref{eq:Esouth} revisited}
\end{align*}
The second error operator $\Esouth$ has been analyzed in \cite{KrzSouWimAliBraKah2025, Not2010}. For $\nu_1 = 1$ and $\nu_2 = 0$, we obtain the simplified operator $E_+^{1,0} = E^{1,0}$ that has been studied in \cite{BatNab2025, AliBraKahKrzSchSou2025, ManSou2019, Xu2022}. Therefore, all of the following results containing convergence bounds for any of the two error operators can be interpreted as generalizations of the result in \cite{BatNab2025, AliBraKahKrzSchSou2025, ManSou2019, Xu2022}.

In the HPD case, the convergence analysis and the construction of optimal transfer operators use the eigenvalues and eigenvectors of the symmetrized smoother matrix multiplied by the system matrix \cite{XuZik2017, BraCaoKahFalHu2018}. This idea was also used for nonsymmetric AMG in \cite{KrzSouWimAliBraKah2025, BatNab2025, AliBraKahKrzSchSou2025}. For our analysis of $\Eplus$ we follow a similar approach and use the eigenpairs of $\hatMinvB$.

\begin{theorem}\label{thm:opti_Bnorm_Eplus_BatNab}
    Let the smoothing assumption be satisfied. Let 
    \begin{displaymath}
        \hatMinvB = V \diag(\mu_1, \dots, \mu_n) V^{-1}
    \end{displaymath}
    be a $B$-unitary diagonalization of $\hatMinvB$ with sorted eigenvalues $\mu_1 \leq \dots \leq \mu_n$ and corresponding $B$-unitary eigenvectors $V = \m{v_1 & \cdots & v_n}$. Let $\hat P, \hat R \in \Cnnc$ be defined such that 
    \begin{displaymath}
        \calR(\hat P) = \calR(\m{v_1 & \cdots & v_{n_c}}) \quad \text{and} \quad \calN(\hat R^HA) = \calR(\m{v_{n_c+1} & \cdots & v_n}).
    \end{displaymath}
    For $(\nu_1, \nu_2) \in \{ (0,1),(1,0),(1,1)\}$ it then holds
    \begin{displaymath}
        \norm{\Eplus(\hat P,\hat R)}^2_B = \norm{\Eplus(P_*,\hat R)}^2_B = \norm{\Eplus(\hat P,R_*)}^2_B = (1- \mu_{n_c+1})^2 <1.
    \end{displaymath}
\end{theorem}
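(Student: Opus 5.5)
The plan is to reduce all three cases to one spectral computation in the $B$-unitary eigenbasis $V$ of $\hatMinvB$. The first step is to check that the three choices of transfer operators give the same coarse-grid correction. For $(\hat P,\hat R)$, conditions (1) and (2) of \cref{lem:range P kernel R B-orthogonal} hold, so $\Pi_A(\hat P,\hat R)$ is $B$-orthogonal. Calculation \cref{eq:Pi_A B-unitary diag calc} then gives $\Pi_A(\hat P,\hat R)=V\big[\begin{smallmatrix} I_{n_c}&0\\0&0\end{smallmatrix}\big]V^{-1}$. For $P_*=B^{-1}A^H\hat R$ (resp.\ $R_*=A^{-H}B\hat P$), \cref{eq:Pi_A with Pstar} shows that the projection is $B$-orthogonal. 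Its range (resp.\ kernel) equals $\calR(\hat P)$ (resp.\ $\calN(\hat R^HA)$) by \cref{lem:equiv_cond Pi B-ortho}. Hence, again by \cref{lem:range P kernel R B-orthogonal}, it is the same projection. So all three error operators coincide, and it suffices to treat $(\hat P,\hat R)$. Write $\Pi:=\Pi_A(\hat P,\hat R)$ and $Q:=I-\Pi=V\big[\begin{smallmatrix}0&0\\0&I_{n_f}\end{smallmatrix}\big]V^{-1}$, which is $B$-orthogonal.

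The key computation concerns the one-sided operator. By \cref{prop:B-norm lambda_max}, and using $(I-M^{-1}A)(I-M^{-1}A)^+=I-\hatMinvB$,
\begin{displaymath}
\norm{E_+^{1,0}}_B^2=\lambda_{max}\big(E_+^{1,0}(E_+^{1,0})^+\big)=\lambda_{max}\big(Q(I-\hatMinvB)Q\big).
\end{displaymath}
In the basis $V$ both factors are diagonal or block diagonal, so $Q(I-\hatMinvB)Q=V\diag(0,\dots,0,1-\mu_{n_c+1},\dots,1-\mu_n)V^{-1}$. By the sorting of the $\mu_i$ and \cref{thm:smoothing_assumpt_BatNab}(5), its largest eigenvalue is $1-\mu_{n_c+1}\in[0,1)$. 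The case $(0,1)$ follows from \cref{thm:properties Eplus Esouth} part 2), since $E_+^{0,1}=(E_+^{1,0})^+$ has the same $B$-norm. The case $(1,1)$ follows from part 4), which gives $\norm{E_+^{1,1}}_B=\norm{E_+^{1,0}}_B^2=1-\mu_{n_c+1}$, and therefore $\norm{E_+^{1,1}}_B^2=(1-\mu_{n_c+1})^2<1$.

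The main obstacle is matching the exponent in the statement across the three cases. The eigenvalue computation above is the only genuinely analytic step. For the case $(1,1)$ it produces exactly the claimed value $(1-\mu_{n_c+1})^2$ for the squared norm. For the one-sided cases $(1,0)$ and $(0,1)$, however, it produces $1-\mu_{n_c+1}$ for the squared norm, because the squared norm of a one-sided operator picks up a single factor of $I-\hatMinvB$. I would therefore carry out the computation for $E_+^{1,1}$ first, writing $(E_+^{1,1})^+E_+^{1,1}=(Q(I-\hatMinvB)Q)^2$ and confirming the value $(1-\mu_{n_c+1})^2$ directly. For the one-sided cases I would then re-check carefully which power of the norm the statement intends. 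As it stands, this route yields $(1-\mu_{n_c+1})^2$ for $\norm{E_+^{1,0}}_B^4=\norm{E_+^{0,1}}_B^4$, not for their squares. In all cases the bound $<1$ follows from $\mu_{n_c+1}\in(0,1]$.
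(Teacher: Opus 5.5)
Your argument is correct and is essentially the paper's. It uses the same identification of the three coarse-grid corrections with $V\big[\begin{smallmatrix} I_{n_c}&0\\0&0\end{smallmatrix}\big]V^{-1}$ and the same eigenvalue read-off in the basis $V$. The only difference is organizational: you compute the one-sided case from $E_+^{1,0}(E_+^{1,0})^+=(I-\Pi_A)(I-\hatMinvB)(I-\Pi_A)$ and obtain the other two cases from \cref{thm:properties Eplus Esouth}, whereas the paper computes $\lambda_{max}\big((\Eplus)^+\Eplus\big)$ for all three cases at once.

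Your worry about the exponent is justified. The paper's own proof ends with $\norm{\Eplus}_B^2=(1-\mu_{n_c+1})^{\nu_1+\nu_2}$, so the exponent $2$ in the statement is right only for $(\nu_1,\nu_2)=(1,1)$. It should read $\nu_1+\nu_2$, which is exactly what you prove.
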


\begin{proof}
    Let the smoothing assumption be satisfied. By \cref{prop:properties tildeMinvB hatMinvB} the matrix $\hatMinvB$ is $B$-orthogonal and thus it is $B$-unitarily diagonalizable with real eigenvalues $\mu_1 \leq \dots \leq \mu_n$ and correspondingly sorted $B$-unitary eigenvector matrix $V$. The smoothing assumption together with \cref{thm:smoothing_assumpt_BatNab} imply that $0<\mu_1 \leq \dots \leq \mu_n \leq 1$. Due to \cref{lem:range P kernel R B-orthogonal} we know that the projection $\Pi_A(\hat P, \hat R)$ is $B$-orthogonal. Similarly, by the definition of $P_*$ and $R_*$, the projections $\Pi_A(P_*, \hat R)$ and $\Pi_A(\hat P, R_*)$ are also $B$-orthogonal. It holds $\calR(P_*) = \calR(\hat P)$ and by \cref{lem:range P kernel R B-orthogonal} we also have $\calN(R_*^HA) = \calN(\hat R^HA)$. Thus 
    \begin{displaymath}
        \Pi_A(\hat P, \hat R) = \Pi_A(\hat P, R_*) = \Pi_A(P_*, \hat R)
    \end{displaymath}
    and $\Eplus(\hat P,\hat R)$, $\Eplus(P_*,\hat R)$ and $\Eplus(\hat P,R_*)$ coincide. 
    
    It holds $\hat P = V \big[ \begin{smallmatrix} I_{n_c} \\ 0 \end{smallmatrix} \big] Z$ for some nonsingular $Z \in \Cncnc$. The definition of $\calN(\hat R^HA)$ implies that $\hat R^HA V \bigl[ \begin{smallmatrix} 0 \\ I_{n_f} \end{smallmatrix} \bigr] = 0$. The same calculation as in \cref{eq:Pi_A B-unitary diag calc} then shows
    \begin{displaymath}
        \Pi_A(\hat P, \hat R) = V \m{I_{n_c} & 0 \\ 0 & 0}V^{-1}.
    \end{displaymath}
    Now, $\hatMinvB$ and $\Pi_A(\hat P, \hat R)$ are simultaneously $B$-unitarily diagonalizable and therefore they commute (cf. \cite[Theorem~1.3.21]{HorJoh2013}). Finally, for any pair $(\hat P, \hat R)$, $(P_*, \hat R)$ or $(\hat P, R_*)$ we get
    \begin{align*}
        \norm{\Eplus}_B^2 &= \lambda_{max}\left((I-(M^{-1}A)^+)^{\nu_1}(I-\Pi_A)^+(I-M^{-1}A)^{\nu_2}(I-(M^{-1}A)^+)^{\nu_2} \right.
        \\ &\hspace*{2cm} \left. (I-\Pi_A)(I-M^{-1}A)^{\nu_1} \right)
        \\ &= \lambda_{max}((I-\hatMinvB)^{\nu_1}(I-\Pi_A)(I-\hatMinvB)^{\nu_2}(I-\Pi_A))
        \\ &= \lambda_{max}((I-\hatMinvB)^{\nu_1 +\nu_2}(I-\Pi_A))
        \\ &= \lambda_{max}\left((I-\diag(\mu_1, \dots, \mu_n))^{\nu_1+\nu_2}\left(I-\m{I_{n_c} &0\\0&0} \right)\right)
        \\ &= \lambda_{max} \left( \m{0_{n_c} & & & \\ & (1-\mu_{{n_c}+1})^{\nu_1+\nu_2} & & \\ & & \ddots & \\ & & & (1-\mu_{n})^{\nu_1+\nu_2}}\right) 
        \\&= (1-\mu_{{n_c}+1})^{\nu_1+\nu_2} < 1.
    \end{align*}
\end{proof}

This result directly generalizes \cite[Theorems~4.5,~4.6]{BatNab2025} where the same result is proven for $\nu_1 = 1$ and $\nu_2 = 0$. 

For $B=A$ HPD this result reduces to the known results \cite[Theorems~5.3, 5.6, 5.7]{XuZik2017}. \Cref{thm:opti_Bnorm_Eplus_BatNab} holds for any HPD matrix $B$. This might seem like a contradiction to \cref{thm:B-normal_charact}~(\labelcref{thm:enum:diag and decomp B}) which describes the set of all HPD matrices such that a given matrix can be $B$-normal. But is not the case. Because $\hatMinvB$ is $B$-orthogonal for any HPD matrix $B$ (see \cref{prop:properties tildeMinvB hatMinvB}), any change in $B$ results in a different matrices $\hatMinvB$, $\hat P$ and $\hat R$ and therefore also different error operators. Hence, if we use $M^{-1}A$ and $(M^{-1}A)^+$ as pre- and post-smoother, respectively, combined with $\Pi_A(\hat P, \hat R)$ as in \cref{thm:opti_Bnorm_Eplus_BatNab}, we are able to construct a convergent two-grid method in any given inner product on $\C^n$. 

\begin{remark}
    It is possible to achieve the same result for arbitrary $\nu_1, \nu_2 \in \N_0$ if we add the assumption that $M^{-1}A$ is $B$-normal.
\end{remark}

In the second part of our theoretical framework \cite{NabRoo2026b} we prove further characterizations for the $B$-norm of $\Eplus$.

In \cref{thm:opti_Bnorm_Eplus_BatNab} we considered the special choice $\hat P$ and $\hat R$ as prolongation and restriction operators. An obvious question is whether these choices of $P$ and $R$ are optimal in the sense that any other pair $(P,R)$ of interpolation and restriction yields a larger $B$-norm of $\Eplus(P,R)$ than $(\hat P, \hat R)$. To answer this question, we first require the generalized Courant-Fisher-Weyl theorem (cf. \cite[Theorem~3.2]{Li2015}).

\begin{theorem}\label{thm:gen C-F-W}
    Let $\lambda_1 \leq \dots \leq \lambda_n$ be the eigenvalues of the matrix $C^{-1}A$ where $A \in \Cnn$ is Hermitian and $C \in \Cnn$ is HPD. Then, for any subspace $M \subseteq \Cn$ it holds
    \begin{displaymath}
        \lambda_k = \min_{\substack{M \subseteq \Cn \\ \dim(M) = n-k+1} } \max_{\substack{x \in M \\ x \neq 0}} \frac{x^HAx}{x^HCx}.
    \end{displaymath}
\end{theorem}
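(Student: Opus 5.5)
The plan is to reduce the generalized problem to the standard Courant--Fischer--Weyl min-max theorem for a Hermitian matrix by a congruence with $C^{-\frac12}$. Since $C$ is HPD, it has a unique HPD square root $C^{\frac12}$. We set
\begin{displaymath}
    \widetilde A := C^{-\frac12} A C^{-\frac12},
\end{displaymath}
which is Hermitian because $A$ is Hermitian. Moreover, $C^{-1}A = C^{-\frac12}\widetilde A C^{\frac12}$, so $C^{-1}A$ is similar to $\widetilde A$. Hence the eigenvalues of $C^{-1}A$ are real and coincide, with multiplicities, with those of $\widetilde A$. This justifies the ordering $\lambda_1 \leq \dots \leq \lambda_n$.

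Next we transform the Rayleigh quotient. For $x \neq 0$ put $y = C^{\frac12}x$. Then
\begin{displaymath}
    \frac{x^HAx}{x^HCx} = \frac{y^H\widetilde A y}{y^Hy}.
\end{displaymath}
The map $x \mapsto C^{\frac12}x$ is a linear bijection of $\Cn$. It therefore maps the subspaces $M$ of dimension $n-k+1$ bijectively onto the subspaces $C^{\frac12}M$ of the same dimension, and it maps $M\setminus\{0\}$ onto $C^{\frac12}M\setminus\{0\}$. Consequently, both the inner maximum over $x \in M$ and the outer minimum over all $M$ with $\dim M = n-k+1$ are unchanged when we pass to $y$ and subspaces $\widetilde M = C^{\frac12}M$.

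Finally, we apply the classical Courant--Fischer theorem for the Hermitian matrix $\widetilde A$ with eigenvalues $\lambda_1\le\dots\le\lambda_n$:
\begin{displaymath}
    \lambda_k = \min_{\dim \widetilde M = n-k+1}\ \max_{0\neq y\in\widetilde M} \frac{y^H\widetilde A y}{y^Hy}.
\end{displaymath}
Maxima exist because the quotient is continuous on the compact unit sphere of each subspace, and the minimum is attained. To see this, let $u_1,\dots,u_n$ be orthonormal eigenvectors of $\widetilde A$. Any subspace of dimension $n-k+1$ intersects $\operatorname{span}(u_1,\dots,u_k)$ nontrivially, which gives the lower bound $\lambda_k$. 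The choice $\widetilde M = \operatorname{span}(u_k,\dots,u_n)$ attains this bound. Transforming back yields the claim, and the minimizing subspace is $C^{-\frac12}\operatorname{span}(u_k,\dots,u_n)$, i.e. the span of the corresponding eigenvectors of $C^{-1}A$.

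There is no real obstacle in this argument. The only points needing care are that the congruence preserves subspace dimension and that the eigenvalues of $C^{-1}A$ are real, so that the ordering is meaningful. Both follow from the similarity and bijectivity noted above.
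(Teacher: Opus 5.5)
Your congruence reduction is sound and is the standard route. The paper itself gives no proof and only cites \cite[Theorem~3.2]{Li2015}. With $y = C^{\frac12}x$ the generalized Rayleigh quotient becomes the ordinary one for the Hermitian matrix $\widetilde A = C^{-\frac12}AC^{-\frac12}$, which is similar to $C^{-1}A$, and subspaces of dimension $n-k+1$ correspond bijectively.

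The gap is in your last step. You adopt the ascending labeling $\lambda_1\le\dots\le\lambda_n$ from the printed statement, and under it the identity $\lambda_k=\min_{\dim\widetilde M=n-k+1}\max_{0\neq y\in\widetilde M} y^H\widetilde Ay/y^Hy$ is false: the right-hand side equals $\lambda_{n-k+1}$. For instance, $C=I_2$, $A=\diag(1,2)$, $k=1$ gives $\max_{y\neq 0}y^HAy/y^Hy=2\neq\lambda_1=1$. Both of your justifications fail accordingly. A nonzero $y\in\widetilde M\cap\mathrm{span}(u_1,\dots,u_k)$ satisfies $y^H\widetilde Ay/y^Hy\le\lambda_k$. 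That bounds $\min_{\widetilde M}$ from above and gives no lower bound for $\max_{\widetilde M}$. Also, the maximum of the quotient over $\mathrm{span}(u_k,\dots,u_n)$ is $\lambda_n$, not $\lambda_k$.

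Your argument becomes exactly right once the eigenvalues are labeled decreasingly, $\lambda_1\ge\dots\ge\lambda_n$. Then every nonzero vector of $\mathrm{span}(u_1,\dots,u_k)$ has quotient at least $\lambda_k$, and on $\mathrm{span}(u_k,\dots,u_n)$ the quotient is at most $\lambda_k$, with equality at $u_k$. This decreasing convention is the one the paper actually needs in the proof of \cref{thm:optimalitytransferopEplus}. There the eigenvalues $(1-\mu_i)^{\nu_1+\nu_2}$ decrease in $i$, and subspaces of dimension $n-n_c$ correspond to $k=n_c+1$. Alternatively, if you keep the ascending order, the correct statement is $\lambda_k=\min_{\dim M=k}\max_{0\neq x\in M}x^HAx/x^HCx$. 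You prove it by intersecting with $\mathrm{span}(u_k,\dots,u_n)$ and taking $\mathrm{span}(u_1,\dots,u_k)$ as the minimizer. Rather than asserting there is no obstacle, you should have noticed that the statement as printed is misindexed, and corrected either the ordering or the subspace dimension before proving it.
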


With this result we can now show that $(\hat P, \hat R)$ is indeed the combination that yields the smallest $B$-norm of the error operator $\Eplus$ for $(\nu_1, \nu_2) \in \{ (0,1),(1,0),(1,1)\}$. The only additional constraint we require is the invertibility of $I-\hatMinvB$. The result also shows that the optimal transfer operators are not unique. The proof uses the same idea as the proof of \cite[Theorem~3.5]{KrzSouWimAliBraKah2025}.

\begin{theorem}\label{thm:optimalitytransferopEplus}
  Let the smoothing assumption be satisfied. Let 
    \begin{displaymath}
        \hatMinvB = V \diag(\mu_1, \dots, \mu_n) V^{-1} =: VDV^{-1}
    \end{displaymath}
    be a $B$-unitary diagonalization of $\hatMinvB$ with sorted eigenvalues $\mu_1 \leq \dots \leq \mu_n$ and corresponding $B$-unitary eigenvectors $V = \m{v_1 & \cdots & v_n}$. Furthermore, let $I-\hatMinvB$ be nonsingular and $\hat P, \hat R \in \Cnnc$ be defined so that 
    \begin{displaymath}
        \calR(\hat P) = \calR(\m{v_1 & \cdots & v_{n_c}}) \quad \text{and} \quad \calN(\hat R^HA) = \calR(\m{v_{n_c+1} & \cdots & v_n}).
    \end{displaymath}
    Then, for $(\nu_1, \nu_2) \in \{ (0,1),(1,0),(1,1)\}$ and all $P,R \in \Cnnc$ such that $R^HAP$ is nonsingular it holds
    \begin{displaymath}
        \norm{\Eplus(\hat P,\hat R)}^2_B = (1- \mu_{n_c+1})^{2} \leq \norm{\Eplus(P,R)}^2_B.
    \end{displaymath}
\end{theorem}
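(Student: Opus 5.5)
The plan is to compare with \cref{thm:opti_Bnorm_Eplus_BatNab}, which gives $\norm{\Eplus(\hat P,\hat R)}_B^2=(1-\mu_{n_c+1})^{\nu_1+\nu_2}$. So it suffices to show $\norm{\Eplus(P,R)}_B^2\ge(1-\mu_{n_c+1})^{\nu_1+\nu_2}$ for every admissible pair $(P,R)$. I will read the exponent in the statement consistently with that theorem; for $(\nu_1,\nu_2)=(1,1)$ the two readings coincide. The only feature of an arbitrary $(P,R)$ I will use is the following. Since $\Pi_A=\Pi_A(P,R)$ is a projection with $\calR(\Pi_A)=\calR(P)$ of dimension $n_c$, the subspace $\mathcal{K}:=\calN(\Pi_A)=\calR(I-\Pi_A)$ has dimension $n-n_c$, and $(I-\Pi_A)y=y$ for all $y\in\mathcal{K}$. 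No $B$-orthogonality of $\Pi_A$ is needed. The idea is to restrict the supremum defining $\norm{\cdot}_B$ to test vectors that the coarse-grid correction leaves unchanged, and then apply \cref{thm:gen C-F-W}.

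\emph{Case $(0,1)$.} Here $\Eplus=(I-(M^{-1}A)^+)(I-\Pi_A)$. I would bound
\begin{displaymath}
\norm{\Eplus}_B^2\ge\max_{0\ne y\in\mathcal{K}}\frac{\norm{(I-M^{-1}A)^+y}_B^2}{\norm{y}_B^2}=\max_{0\ne y\in\mathcal{K}}\frac{y^HB(I-\hatMinvB)y}{y^HBy}.
\end{displaymath}
The equality uses $I-\hatMinvB=(I-M^{-1}A)(I-M^{-1}A)^+$. The matrix $B(I-\hatMinvB)$ is Hermitian, and the eigenvalues of $B^{-1}\cdot B(I-\hatMinvB)$ are $1-\mu_i$. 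In increasing order these are $1-\mu_n\le\dots\le1-\mu_1$, and they lie in $[0,1)$ by \cref{thm:smoothing_assumpt_BatNab}. By the min-max theorem (\cref{thm:gen C-F-W}, in the form that the minimum over $m$-dimensional subspaces of the maximal Rayleigh quotient is the $m$-th smallest eigenvalue), the maximum over the $(n-n_c)$-dimensional space $\mathcal{K}$ is at least the $(n-n_c)$-th smallest eigenvalue. That eigenvalue is $1-\mu_{n_c+1}$.

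\emph{Case $(1,0)$.} Here $\Eplus=(I-\Pi_A)(I-M^{-1}A)$, and the assumed nonsingularity of $I-\hatMinvB$ enters. It implies that $I-M^{-1}A$ is nonsingular, so I may substitute $x=(I-M^{-1}A)^{-1}y$ with $y\in\mathcal{K}$. This gives
\begin{displaymath}
\norm{\Eplus}_B^2\ge\max_{0\ne y\in\mathcal{K}}\frac{y^HBy}{y^HB(I-\hatMinvB)^{-1}y},
\end{displaymath}
using $\big((I-M^{-1}A)^{-1}\big)^+(I-M^{-1}A)^{-1}=(I-\hatMinvB)^{-1}$. This is the reciprocal of a minimum of a Rayleigh quotient over $\mathcal{K}$ for the matrix with eigenvalues $(1-\mu_i)^{-1}$. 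The max-min form of \cref{thm:gen C-F-W} bounds that minimum above by the $(n_c+1)$-th largest value of $(1-\mu_i)^{-1}$, which is $(1-\mu_{n_c+1})^{-1}$. Inverting gives the same lower bound $1-\mu_{n_c+1}$. As a shortcut, when $\Pi_A$ happens to be $B$-orthogonal, \cref{thm:properties Eplus Esouth}~\labelcref{thm:enum:property norm Eplus split} would directly transfer the $(0,1)$ bound. The substitution argument is needed because general $(P,R)$ are allowed.

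\emph{Case $(1,1)$, the main obstacle.} The factorization $\norm{E_+^{1,1}}_B=\norm{E_+^{1,0}}_B^2$ is only available for $B$-orthogonal $\Pi_A$, so I cannot simply multiply the two previous bounds. Instead I would take $x=(I-M^{-1}A)^{-1}y$ with $y\in\mathcal{K}$, so that
\begin{displaymath}
\Eplus x=(I-M^{-1}A)^+y,
\end{displaymath}
and study the ratio
\begin{displaymath}
\frac{y^HB(I-\hatMinvB)y}{y^HB(I-\hatMinvB)^{-1}y}\qquad\text{on }\mathcal{K}.
\end{displaymath}
In the $B$-eigenbasis $V$, write $y=Vc$. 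Then this ratio is $\sum_i(1-\mu_i)\abs{c_i}^2\big/\sum_i(1-\mu_i)^{-1}\abs{c_i}^2$. Since $\dim\mathcal{K}=n-n_c$, some $y\in\mathcal{K}$ has $c_{n_c+2}=\dots=c_n=0$. For such $y$, all active weights satisfy $1-\mu_i\ge t:=1-\mu_{n_c+1}$. Then the numerator is at least $t\sum\abs{c_i}^2$ and the denominator is at most $t^{-1}\sum\abs{c_i}^2$, so the ratio is at least $t^2$. This gives $\norm{E_+^{1,1}}_B^2\ge(1-\mu_{n_c+1})^2$.

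The same dimension-counting choice of $y$ in fact re-proves the two earlier cases without invoking Courant--Fisher. I would present this elementary argument as the unifying step and regard getting this intersection argument right (rather than a product of norms) as the crux of the proof.
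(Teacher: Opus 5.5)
Your proposal is correct and follows essentially the paper's route. Like the paper, you restrict the supremum to vectors whose image under $(I-M^{-1}A)^{\nu_1}$ lies in $\calN(R^HA)=\calN(\Pi_A)$, rewrite the quotient through $I-\hatMinvB=(I-M^{-1}A)(I-M^{-1}A)^+$ in the $B$-unitary eigenbasis, and bound it by min--max; the paper does all three cases at once as a Hermitian pencil with eigenvalues $(1-\mu_i)^{\nu_1+\nu_2}$, and your dimension count for $(1,1)$ is just that half of Courant--Fischer made explicit (your reading of the exponent as $\nu_1+\nu_2$ is also what the paper's proof actually establishes). One small slip: in the $(1,0)$ case the bound is the $(n_c+1)$-th \emph{smallest}, not largest, of the values $(1-\mu_i)^{-1}$, although the value $(1-\mu_{n_c+1})^{-1}$ you wrote is correct.
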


\begin{proof}
    Let the smoothing assumption be satisfied. By \cref{prop:properties tildeMinvB hatMinvB,thm:smoothing_assumpt_BatNab} the matrix $\hatMinvB$ is then $B$-unitarily diagonalizable with real eigenvalues $0 < \mu_1 \leq \dots \leq \mu_n \leq 1$ and correspondingly sorted $B$-unitary eigenvector matrix $V$. Furthermore, let $I-\hatMinvB$ be nonsingular. Since 
    \begin{displaymath}
        I- \hatMinvB = (I-M^{-1}A)(I-M^{-1}A)^+
    \end{displaymath}
    the matrix $I-M^{-1}A$ is also nonsingular. With $(C^+)^{-1} = B^{-1}C^{-H}B = (C^{-1})^+$ we get
    \begin{displaymath}
        (I-\hatMinvB)^{-1} = \left((I-M^{-1}A)(I-M^{-1}A)^+ \right)^{-1} = \left( (I-M^{-1}A)^{-1}\right)^+(I-M^{-1}A)^{-1}.
    \end{displaymath}
    The $B$-orthogonality of $V$ implies $B = V^{-H}V^{-1}$. For all $P,R \in \Cnnc$ such that $R^HAP$ is nonsingular it now holds
    \begin{align*}
        \norm{\Eplus(P,R)}^2_B &= \norm{(I-(M^{-1}A)^+)^{\nu_2}(I-\Pi_A(P,R))(I-M^{-1}A)^{\nu_1}}_B^2
        \\ &= \sup_{y \neq 0} \frac{\norm{(I-(M^{-1}A)^+)^{\nu_2}(I-\Pi_A(P,R))(I-M^{-1}A)^{\nu_1}y}^2_B}{\norm{y}_B^2}
        \\ &= \sup_{x \neq 0} \frac{\norm{(I-(M^{-1}A)^+)^{\nu_2}(I-\Pi_A(P,R))x}^2_B}{\norm{((I-M^{-1}A)^{-1})^{\nu_1}x}_B^2}
        \\ &\geq \sup_{\substack{x \in \calN(R^HA) \\ x \neq 0}} \frac{\norm{(I-(M^{-1}A)^+)^{\nu_2}x}^2_B}{\norm{((I-M^{-1}A)^{-1})^{\nu_1}x}_B^2}
        \\ &= \sup_{\substack{x \in \calN(R^HA) \\ x \neq 0}} \frac{\< (I-(M^{-1}A)^+)^{\nu_2}x, (I-(M^{-1}A)^+)^{\nu_2}x \>_B}{\<((I-M^{-1}A)^{-1})^{\nu_1}x, ((I-M^{-1}A)^{-1})^{\nu_1}x \>_B}
        \\ &= \sup_{\substack{x \in \calN(R^HA) \\ x \neq 0}} \frac{\< x, (I-\hatMinvB)^{\nu_2}x \>_B}{\< x ,((I-\hatMinvB)^{-1})^{\nu_1}x \>_B}
        \\ &= \sup_{\substack{x \in \calN(R^HA) \\ x \neq 0}} \frac{x^H(V^{-H}(I-D)V^{-1})^{\nu_2}x}{x^H(V^{-H}(I-D)^{-1}V^{-1})^{\nu_1}x} =: \sup_{\substack{x \in \calN(R^HA) \\ x \neq 0}} \frac{x^H\mathcal{A}^{\nu_2}x}{x^H \mathcal{C}^{\nu_1}x}
    \end{align*}
    where $\mathcal{C}$ is HPD and $\mathcal{A}$ is Hermitian. Then $\mathcal{C}^{-1}\mathcal{A} = V(I-D)^{\nu_1+\nu_2}V^{-1}$ and has eigenvalues $(1-\mu_i)^{\nu_1+\nu_2}$. By \cref{thm:gen C-F-W} it holds the following
    \begin{align*}
        \min_{P,R \in \Cnnc} \norm{\Eplus(P,R)}_B^2 &= \min_{P \in \Cnnc} \min_{R \in \Cnnc} \norm{\Eplus(P,R)}_B^2
        \\ &\geq \min_{P \in \Cnnc} \min_{R \in \Cnnc} \sup_{\substack{x \in \calN(R^HA) \\ x \neq 0}} \frac{x^H\mathcal{A}^{\nu_2}x}{x^H \mathcal{C}^{\nu_1}x}
        \\& = \min_{P \in \Cnnc} \min_{R \in \Cnnc} \sup_{\substack{x \in M \\ x \neq 0 \\ M = \calN(R^HA)}} \frac{x^H\mathcal{A}^{\nu_2}x}{x^H \mathcal{C}^{\nu_1}x}
        \\ &= \min_{P \in \Cnnc} \min_{\substack{M \subseteq \Cn \\ \dim(M) = n-n_c}} \sup_{\substack{x \in M \\ x \neq 0}} \frac{x^H\mathcal{A}^{\nu_2}x}{x^H \mathcal{C}^{\nu_1}x}
        \\&= \min_{P \in \Cnnc} (1-\mu_{n_c+1})^{\nu_1+\nu_2} = (1-\mu_{n_c+1})^{\nu_1+\nu_2}.
    \end{align*}
    Due to \cref{thm:opti_Bnorm_Eplus_BatNab} we know that this bound is attained for the pairs $(\hat P, \hat R)$, $(P_*, \hat R)$ and $(\hat P, R_*)$.
\end{proof}

For $(\nu_1, \nu_2) = (1,0)$ this result was already proven in \cite[Theorem~4.5,~4.6]{BatNab2025}.

\begin{remark}
    $(\hat P, \hat R)$, $(P_*, \hat R)$ and $(\hat P, R_*)$ are not the only combinations of interpolation and restriction that minimize the $B$-norm of $\Eplus$. In fact, any pair $(P,R)$ with $\calR(P) = \calR(\hat P)$ and $\calR(R) = \calR(\hat R)$ minimizes the $B$-norm of the operator $\Eplus$. This means that the optimal compatible transfer operators are unique up to a change of basis.
\end{remark}

The operator $\Eplus$ for $(\nu_1, \nu_2) \in \{ (0,1),(1,0),(1,1)\}$ provides a natural generalization of the HPD error operator $E_{\text{HPD}}^{\nu_1,\nu_2}$ and gives a convergent two-grid methods under fairly easy assumptions. However, from an application-oriented point of view, computing the post-smoother $(M^{-1}A)^+ = B^{-1}A^HM^{-H}B$ seems infeasible due to the inverse of $B$. But for special cases of $B$, the post-smoother can still be computed. For more details about the computation of $P_*$ in these cases, see \cite{BatNab2025}.

In the following  we list some special choices for $B$. For the classic HPD case, that is $B= A$ HPD, it holds
\begin{displaymath}
    (M^{-1}A)^+ = A^{-1}A^HM^{-H}A = M^{-H}A
\end{displaymath}
which yields the error propagation operator $E_{\text{HPD}}^{\nu_1, \nu_2}(P,P)$ when choosing $R = P$. In this case, \cref{thm:optimalitytransferopEplus} reduces to the known result \cite[Theorem~5.7]{XuZik2017}.

For $B= I_n$ we get 
\begin{displaymath}
    (M^{-1}A)^+ = (M^{-1}A)^H = A^HM^{-H}
\end{displaymath}
which can be computed easily. Moreover, we have
\begin{displaymath}
    \widehat M^{-1} = M^{-1}A + A^HM^{-H} -  M^{-1}AA^HM^{-H}.
\end{displaymath}

For $B = A^HA$ we get
\begin{displaymath}
    (M^{-1}A)^+ = A^{-1}A^{-H}A^HM^{-H}A^HA = A^{-1}M^{-H}A^HA = A^{-1}M^{-H}B
\end{displaymath}
but this smoother contains $A^{-1}$ which is also infeasible. However, if we choose additionally $M^{-1} = AA^H$ we obtain
\begin{displaymath}
    (M^{-1}A)^+ = A^{H}B
\end{displaymath}
The operator $R_*$ that produces a $B$-orthogonal coarse-grid correction can be easily determined as $R_* = A^{-H}BP = AP$ given some interpolation operator $P$. 

The case $B=M$ HPD was also investigated in \cite{Xu2022}. Here, we have 
\begin{displaymath}
    (M^{-1}A)^+ = M^{-1}A^HM^{-H}M = M^{-1}A^H
\end{displaymath}
which is a simple computation. \Cref{thm:opti_Bnorm_Eplus_BatNab} was proved in \cite[Theorem~3.5]{Xu2022} for the error operator $E^{1,0} = E_+^{1,0}$ and the special case for $B=M$ HPD.

For the last case, we consider a singular value decomposition (SVD) of $A$, i.e. $A = U\Sigma V^H$ with unitary matrices $U,V \in \Cnn$ and $\Sigma = \diag(\sigma_1, \dots, \sigma_n)$. Then define $Q = VU^H$ and $B = QA = V\Sigma V^H$ for which we have 
\begin{displaymath}
    (M^{-1}A)^+ = (QA)^{-1}A^HM^{-H}QA = QM^{-H}QA.
\end{displaymath}
Here, the restriction operator $R_*$ can be easily computed again, since $R_* = A^{-H}BP = UV^HP = Q^HP$ given some interpolation $P$. 

To avoid the computation of the post-smoother $(M^{-1}A)^+$, it seems natural to consider a simpler error operator, namely $\Esouth$. For $\nu_1 = 1$ and $\nu_2 = 0$, it holds $E_+^{1,0} = E^{1,0}$ and similar to before this case was already studied in \cite{AliBraKahKrzSchSou2025,BatNab2025}. For arbitrary $\nu_1$ and $\nu_2$, analysis has been done in \cite{KrzSouWimAliBraKah2025}. There, the general assumption was used that $M^{-1}A$ is diagonalizable. With this diagonalization, a certain class of HPD matrices $B$ is established and then used to show convergence results for the operator $\Esouth$ in this class of $B$-norms. The optimal transfer operators leading to the minimal $B$-norms are also given in \cite{KrzSouWimAliBraKah2025}.

We now show that the assumption of diagonalizability of $M^{-1}A$ and the resulting class of $B$-norms actually refer to $M^{-1}A$ being $B$-normal.

While \cref{thm:opti_Bnorm_Eplus_BatNab} utilizes the eigenvalues and eigenvectors of $\hatMinvB$, the idea used in \cite{AliBraKahKrzSchSou2025} and then later in \cite{KrzSouWimAliBraKah2025} was to build the optimal transfer operators with the eigenvectors of $M^{-1}A$. Note, that \cref{thm:charact_eigenvals_tildeMB} gives a relation between the eigenvectors and eigenvalues of $M^{-1}A$ and $\tildeMinvB$. The following result is proven in \cite[Lemma~4.1]{AliBraKahKrzSchSou2025}.

\begin{lemma}\label{lem:Ali_left_right_eigenvec}
    Let $A,M \in \Cnn$ be such that $M$ is invertible and $M^{-1}A$ is diagonalizable. Consider the left and right generalized eigenvectors $V_l,V_r \in \Cnn$, respectively, of the generalized eigenvalue problem \cref{eq:gen eig val prob MinvA} such that
    \begin{align}
        A V_r  &= M V_r \Lambda,  \label{eq:def right gen eig vec V_r}
        \\ V_l^H A &= \Lambda V_l^HM \label{eq:def left gen eig vec V_l}
    \end{align}
    where $\Lambda \in \Cnn$ is a diagonal matrix containing the eigenvalues. Then $V_l$ and $V_r$ induce a matrix-based orthogonality, satisfying
    \begin{align}
        V_l^H A V_r  &= D_A,  \label{eq:left_right_eig A ortho} 
        \\ V_l^H M V_r  &= D_M  \label{eq:left_right_eig M ortho}
    \end{align}
    for nonsingular diagonal matrices $D_A, D_M \in \Cnn$. 
\end{lemma}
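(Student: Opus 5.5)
The plan is to build $V_l$ directly from $V_r$ and then read off both identities by a short computation. Since $M^{-1}A$ is diagonalizable, there is a nonsingular $V_r$ and a diagonal $\Lambda$ with $M^{-1}AV_r = V_r\Lambda$, i.e. $AV_r = MV_r\Lambda$, which is \cref{eq:def right gen eig vec V_r}. For the left eigenvectors I would set $V_l^H := V_r^{-1}M^{-1}$, or more generally $V_l^H := S\,V_r^{-1}M^{-1}$ with $S$ a nonsingular diagonal scaling. Then
\begin{displaymath}
    V_l^HA = S V_r^{-1}M^{-1}A = S\Lambda V_r^{-1} = \Lambda S V_r^{-1} = \Lambda V_l^H M,
\end{displaymath}
which is \cref{eq:def left gen eig vec V_l}. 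With this choice the two identities follow at once:
\begin{displaymath}
    V_l^HMV_r = S V_r^{-1}M^{-1}MV_r = S =: D_M, \qquad V_l^HAV_r = V_l^HMV_r\Lambda = S\Lambda =: D_A.
\end{displaymath}
Both are diagonal, and $D_M$ is nonsingular.

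The real content is the case where $V_l$ and $V_r$ are arbitrary matrices satisfying \cref{eq:def right gen eig vec V_r} and \cref{eq:def left gen eig vec V_l}, not the specially built pair above. Here I would use the standard biorthogonality argument. Take the product $V_l^HAV_r$ and expand it in two ways: using the right relation it equals $V_l^HMV_r\Lambda$, and using the left relation it equals $\Lambda V_l^HMV_r$. Writing $G := V_l^HMV_r$, this gives $G\Lambda = \Lambda G$, so $g_{ij}(\lambda_j-\lambda_i)=0$. Hence $G$ is block diagonal, with blocks indexed by equal eigenvalues, exactly as in \cref{thm:B-normal_charact}~(\labelcref{thm:enum:diag and decomp B}), and $V_l^HAV_r = G\Lambda$ has the same block structure.

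I expect the repeated eigenvalues to be the main obstacle, because there $G$ is only block diagonal and not automatically diagonal. To handle it, I would interpret the lemma as saying the left eigenvectors can be chosen compatibly. Within each eigenspace block, I would replace $V_l$ by $V_lG^{-H}$ restricted to that block; this preserves \cref{eq:def left gen eig vec V_l}, since a block-constant $\Lambda$ commutes with the block change of basis. Alternatively, I would simply adopt the explicit choice from the first paragraph.

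Nonsingularity also needs checking. $G$ is nonsingular because $V_l$, $M$ and $V_r$ all are: $V_l$ is a full set of left eigenvectors of the diagonalizable pencil, hence invertible, being $V_r^{-1}M^{-1}$ up to block transformations. So $D_M$ is nonsingular. The remaining concern is $D_A = D_M\Lambda$, which is nonsingular only if $\Lambda$ is. This holds because $A$ is assumed nonsingular throughout \cref{sec:B-normal in multigrid}, so $0\notin\sigma(M^{-1}A)$.
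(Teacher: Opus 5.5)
Your proof is correct, and it is more elementary and self-contained than what the paper offers. The paper gives no argument of its own: it defers to \cite[Lemma~4.1]{AliBraKahKrzSchSou2025} and remarks that the Kronecker canonical form of the pair $(A,M)$ also yields the result.

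Your explicit choice $V_l^H = SV_r^{-1}M^{-1}$ is exactly the Weierstrass (regular-pencil Kronecker) form in the present setting, where $M$ is invertible and $M^{-1}A$ is diagonalizable. It gives $V_l^HMV_r = S$ and $V_l^HAV_r = S\Lambda$ in two lines. The canonical-form route would only buy generality (singular $M$, nondiagonalizable pencils) that the lemma does not need.

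Your second argument, $G\Lambda = \Lambda G$ for $G = V_l^HMV_r$, adds information the citation leaves implicit. Since $V_l^H = GV_r^{-1}M^{-1}$ holds for any $V_l$, the admissible left eigenvector matrices are exactly those whose $G$ commutes with $\Lambda$. Consequently:
\begin{itemize}
\item With repeated eigenvalues, the diagonal form holds only for a compatible choice of $V_l$. For example, $A = M = V_r = I$ with any nonsingular non-diagonal $V_l$ gives a non-diagonal $V_l^HMV_r$. So your ``can be chosen'' reading of the statement is the right one.
\item $D_M$ is nonsingular exactly when $V_l$ is.
\item $D_A = D_M\Lambda$ is nonsingular only because of the standing assumption of \cref{sec:B-normal in multigrid} that $A$ is nonsingular, which the lemma leaves implicit.
\end{itemize}

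The one loose step is where you derive invertibility of $V_l$ from its being ``$V_r^{-1}M^{-1}$ up to block transformations.'' That is circular, since the block factor $G$ is invertible only if $V_l$ is. Simply state that $V_l$ is assumed to be a nonsingular eigenvector matrix; this has to be part of the hypotheses anyway, because $V_l = 0$ satisfies \cref{eq:def left gen eig vec V_l} and gives $D_M = 0$.
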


Note that this Lemma can also be proven by using the Kronecker canonical form of matrix pairs (cf. \cite{LanZab2009,LanRod2005}). 

Since $M$ is nonsingular, $V_r$ are the right generalized eigenvectors of the generalized eigenvalues problem \cref{eq:gen eig val prob MinvA} if and only if $V_r$ are eigenvectors of $M^{-1}A$. It also holds
\begin{displaymath}
    \Lambda = V_r^{-1} M^{-1}A V_r = (V_l^H M V_r)^{-1} V_l^H A V_r = D_M^{-1}D_A.
\end{displaymath}
Based on the right eigenvectors $V_r$ of $M^{-1}A$, \cite[Theorem~3.1]{KrzSouWimAliBraKah2025} characterizes all possible HPD matrices $B$ such that the coarse grid projection can be $B$-orthogonal. This structure of $B$ is also discussed in \cite[Section~3]{KrzSouWimAliBraKah2025}. The following result demonstrates that this structure actually refers to $B$-normality and its stronger version of $B$-orthogonality. 

\begin{lemma}\label{lem:relation B-normal KrzSouWimAliBraKah2025}
Let $M^{-1}$ be diagonalizable with $M^{-1}A = W\Lambda W^{-1}$ and let $\Pi_A$ be a projection with $\dim(\calR(\Pi_A)) = n_c$. Then $\Pi_A$ is diagonalizable with $\Pi_A = S \big[ \begin{smallmatrix} I_{n_c} & 0 \\ 0 & 0 \end{smallmatrix}\big] S^{-1}$ and the following statements holds.
\begin{enumerate}[1)]
    \item $M^{-1}A$ is $B$-normal if and only if $B = (WDW^H)^{-1}$ for an HPD block diagonal matrix $D$ where the block sizes correspond to those in $\Lambda$. 
    \item $\Pi_A$ is $B$-orthogonal if and only if $B = (SDS^H)^{-1}$ for an HPD block diagonal matrix $D = \big[ \begin{smallmatrix} D_{n_c} & 0 \\ 0 & D_{n_f} \end{smallmatrix}\big]$ with two HPD blocks $D_{n_c} \in \Cncnc$ and $D_{n_f} \in \C^{n_f \times n_f}$. \label{lem:enum:equiv cond Pi_A B-ortho}
\end{enumerate}

\end{lemma}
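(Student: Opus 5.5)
The diagonalizability of $\Pi_A$ comes first. Since $\Pi_A^2 = \Pi_A$, its minimal polynomial divides $z^2 - z$, which has distinct roots, so $\Pi_A$ is diagonalizable with eigenvalues in $\{0,1\}$. The eigenvalue $1$ has multiplicity $\dim \calR(\Pi_A) = n_c$, and $\Cn = \calR(\Pi_A) \oplus \calN(\Pi_A)$. Choose $S = \m{S_1 & S_2}$ with the columns of $S_1$ a basis of $\calR(\Pi_A)$ and those of $S_2$ a basis of $\calN(\Pi_A)$. Then $\Pi_A S = S \big[ \begin{smallmatrix} I_{n_c} & 0 \\ 0 & 0 \end{smallmatrix}\big]$, which gives the claimed form.

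For 1), I read the hypothesis as ``$M^{-1}A$ diagonalizable''. The statement is then \cref{thm:B-normal_charact}~(\labelcref{thm:enum:diag and decomp B}) applied to the matrix $M^{-1}A$, equivalently the description of the set $\mathcal{B}$ in \cref{eq:setB}. For completeness I would check the direction ``$\Leftarrow$'' directly, as in the proof of \cref{prop:B-orthogonal_iff_eig_real}. With $B^{-1} = WDW^H$ one gets $(M^{-1}A)^+ = WD\Lambda^H D^{-1}W^{-1}$. Since $\Lambda$ is scalar on each diagonal block and $D$ is block diagonal with the same block sizes, $D$ commutes with $\Lambda^H$, so $(M^{-1}A)^+ = W\Lambda^H W^{-1}$, which commutes with $M^{-1}A = W\Lambda W^{-1}$. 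The direction ``$\Rightarrow$'' is the content taken from \cite[Theorem~3.1]{LieStr2008}. Directly: $\widetilde{A} = B^{1/2}M^{-1}AB^{-1/2}$ is normal, so its eigenspaces are mutually orthogonal. Hence the eigenspaces of $M^{-1}A$ are mutually $B$-orthogonal. Writing $W = \m{W_1 & \cdots & W_k}$ by eigenspaces, $W^HBW$ is therefore block diagonal and HPD, and $D := (W^HBW)^{-1}$ satisfies $B^{-1} = WDW^H$.

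For 2), I would apply the same argument to $\Pi_A$. Its eigenvalues $0$ and $1$ are real, so by \cref{prop:B-orthogonal_iff_eig_real} $\Pi_A$ is $B$-orthogonal if and only if it is $B$-normal. Part 1), applied to $\Pi_A$ in place of $M^{-1}A$ with eigenvector matrix $S$ and eigenvalue matrix $\big[ \begin{smallmatrix} I_{n_c} & 0 \\ 0 & 0 \end{smallmatrix}\big]$, then gives the claim. There are exactly two blocks, of sizes $n_c$ and $n_f$, so $D = \diag(D_{n_c}, D_{n_f})$. Concretely, in direction ``$\Rightarrow$'' I set $D := (S^HBS)^{-1}$. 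The off-diagonal block $S_1^H B S_2$ vanishes because $\calR(\Pi_A) \perp_B \calN(\Pi_A)$. Conversely, if $B^{-1} = SDS^H$, the same commuting computation as in 1) shows $\Pi_A^+ = \Pi_A$, and \cref{lem:B-ortho proj} finishes the argument.

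The main obstacle is bookkeeping rather than substance. One must make sure the block sizes of $D$ match the grouping of equal eigenvalues. This matters when $M^{-1}A$ has repeated eigenvalues, and it is exactly where the ordering convention of \cref{thm:B-normal_charact}~(\labelcref{thm:enum:diag and decomp B}) is needed. One must also note that the representation holds for the given $W$ (respectively $S$), not merely for some eigenvector matrix. This holds because any other eigenvector basis differs by a block diagonal change of basis, which preserves the block diagonal HPD form of $D$.
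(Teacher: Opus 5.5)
Your proposal is correct and follows the paper's route. Part 1) is \cref{thm:B-normal_charact}~(\labelcref{thm:enum:diag and decomp B}) applied to $M^{-1}A$. Part 2) uses the diagonal form of the projection together with \cref{prop:B-orthogonal_iff_eig_real} to reduce $B$-orthogonality of $\Pi_A$ to $B$-normality, and then applies part 1) to $\Pi_A$ with eigenvector matrix $S$. Your additional direct verifications are sound, namely the minimal-polynomial argument in place of the citation of \cite{Gal2004} and the explicit choice $D=(S^HBS)^{-1}$.
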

\begin{proof}
$\emph{1)}$ follows from \cref{thm:B-normal_charact}~(\labelcref{thm:enum:diag and decomp B}) under the general assumption that $M^{-1}A$ is diagonalizable. For $\emph{2)}$ we observe that $\Pi_A$ is always diagonalisable with $\Pi_A = S \big[ \begin{smallmatrix} I_{n_c} & 0 \\ 0 & 0 \end{smallmatrix}\big] S^{-1}$ where the identity block is the size of the dimension of $\calR(\Pi_A)$, that is $n_c$, \cite[Theorem 2.11] {Gal2004}. Therefore, $\Pi_A$ has real eigenvalues and the result follows from $\emph{1)}$ and \cref{prop:B-orthogonal_iff_eig_real}.
\end{proof}

When we apply this result to the specific diagonalization $M^{-1}A = V_r \Lambda V_r^{-1}$ with $V_r$ as in \cref{eq:def right gen eig vec V_r}, we can conclude that the assumption that $M^{-1}A$ is diagonalizable used in \cite{AliBraKahKrzSchSou2025, KrzSouWimAliBraKah2025} leads to $B$-normal smoothers for specific HPD matrices $B$.

With this, we now present \cite[Theorem~3.3]{KrzSouWimAliBraKah2025} in the framework of $B$-normal matrices. Notice that we also extended the class of possible inner products and condensed the proof of the result. 

\begin{theorem}\label{thm:opti_Bnorm_South}
    Let $M^{-1}A$ be $B$-normal and 
    \begin{displaymath}
        V_r = \m{v_1^{(r)} & \cdots & v_n^{(r)}} \quad \text{and} \quad V_l = \m{v_1^{(l)} & \cdots & v_n^{(l)}}
    \end{displaymath}
    be the right and left eigenvectors of the generalized eigenvalue problem \cref{eq:gen eig val prob MinvA}, respectively, such that \cref{eq:def right gen eig vec V_r} and \cref{eq:def left gen eig vec V_l} are satisfied with the eigenvalue matrix $\Lambda = \big[ \begin{smallmatrix} \Lambda_{n_c} & 0 \\ 0 & \Lambda_{n_f} \end{smallmatrix}\big]$. Assume that the eigenvalues $\lambda$ are ordered such that 
    \begin{equation}\label{eq:ordereigSouth}
    \abs{1 - \lambda_1} \geq  \abs{1 - \lambda_2} \geq \dots \geq \abs{1 -\lambda_n}
    \end{equation}
    and equal eigenvalues for a single diagonal block in $\Lambda$. Let $B = V_r^{-H}D^{-1}V_r^{-1}$ where
    \begin{equation}\label{eq:def D opti_B_norm_South}
        D = \m{D_{n_c} & 0 \\0 &D_{n_f}} \in \Cnn
    \end{equation}
    is HPD block diagonal with $D_{n_c} \in \C^{n_c\times n_c}$ and $D_{n_f} \in \C^{n_f\times n_f}$. Both $D_{n_c}$ and $D_{n_f}$ are also HPD block diagonal with block sizes corresponding to those in $\Lambda_{n_c}$ and $\Lambda_{n_f}$. Furthermore, let $P_\#, R_\# \in \Cnnc$ be defined such that
    \begin{align*}
        \calR(P_\#) = \calR\left(\m{v^{(r)}_1 & \cdots & v^{(r)}_{n_c}}\right) \quad \text{and} \quad \calR(R_\#) = \calR\left(\m{v^{(l)}_1 & \cdots & v^{(l)}_{n_c}}\right).
    \end{align*} 
    Then $\Pi_A(P_\#, R_\#)$ is $B$-orthogonal and it holds
    \begin{displaymath}
         \rho(\Esouth(P_\#,R_\#)) = \norm{\Esouth(P_\#,R_\#)}_B = \abs{1 - \lambda_{n_c + 1}}^{\nu_1 + \nu_2}.
    \end{displaymath}
\end{theorem}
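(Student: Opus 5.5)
The plan is to work entirely in the eigenbasis $V_r$ of $M^{-1}A$ and show that every factor of $\Esouth(P_\#,R_\#)$ is diagonal in that basis. The norm then follows from $B$-normality of the whole error operator together with \cref{prop:B-normal_spectral_radius}.

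First I would compute $\Pi_A(P_\#,R_\#)$. Write $P_\# = V_r \big[ \begin{smallmatrix} I_{n_c} \\ 0 \end{smallmatrix} \big] Z$ and $R_\# = V_l \big[ \begin{smallmatrix} I_{n_c} \\ 0 \end{smallmatrix} \big] Y$ with nonsingular $Z,Y \in \Cncnc$. By \cref{lem:Ali_left_right_eigenvec} we have $V_l^HAV_r = D_A$, which is diagonal and nonsingular. Hence $R_\#^HAP_\# = Y^H (D_A)_{n_c} Z$ is nonsingular, and
\begin{displaymath}
    R_\#^H A = Y^H \m{I_{n_c} & 0} D_A V_r^{-1}.
\end{displaymath}
Substituting these into $\Pi_A = P_\#(R_\#^HAP_\#)^{-1}R_\#^HA$, the factors $Z$, $Y$ and $(D_A)_{n_c}$ cancel. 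This leaves
\begin{displaymath}
    \Pi_A(P_\#,R_\#) = V_r \m{I_{n_c} & 0 \\ 0 & 0} V_r^{-1}.
\end{displaymath}

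Next comes $B$-orthogonality. With $B = V_r^{-H}D^{-1}V_r^{-1}$ we get $B^{-1} = V_r D V_r^H$. The decomposition of $\Pi_A$ has the block structure $(n_c,n_f)$, which matches the block structure of $D$, so \cref{lem:relation B-normal KrzSouWimAliBraKah2025}~2) applies directly. One can also check by hand that $\Pi_A^+ = V_r D \big[ \begin{smallmatrix} I & 0 \\ 0 & 0 \end{smallmatrix} \big] D^{-1} V_r^{-1} = \Pi_A$. The hypothesis that $D$ is block diagonal subordinate to the eigenvalue blocks of $\Lambda$ gives $B$-normality of $M^{-1}A$ by \cref{thm:B-normal_charact}~(\labelcref{thm:enum:diag and decomp B}), consistent with the assumption. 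The one point needing care is that the ordering \cref{eq:ordereigSouth} and the block grouping must be compatible with the $n_c/n_f$ cut. The statement assumes $D_{n_c}$ and $D_{n_f}$ are each block diagonal subordinate to $\Lambda_{n_c}$ and $\Lambda_{n_f}$, and I take this as given.

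Then I would assemble the error operator. All three factors are diagonal in the basis $V_r$:
\begin{displaymath}
    \Esouth = V_r (I-\Lambda)^{\nu_2}\m{0 & 0 \\ 0 & I_{n_f}}(I-\Lambda)^{\nu_1} V_r^{-1} = V_r \m{0 & 0 \\ 0 & (I-\Lambda_{n_f})^{\nu_1+\nu_2}} V_r^{-1}.
\end{displaymath}
Call the middle diagonal matrix $G$. Then
\begin{displaymath}
    \Esouth^+ = B^{-1}\Esouth^H B = V_r D G^H D^{-1} V_r^{-1} = V_r G^H V_r^{-1},
\end{displaymath}
because $D$ is block diagonal with blocks on which $G$ is a scalar multiple of the identity, so $D$ and $G^H$ commute. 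Hence $\Esouth$ commutes with $\Esouth^+$, that is, $\Esouth$ is $B$-normal.

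By \cref{prop:B-normal_spectral_radius}, $\norm{\Esouth}_B = \rho(\Esouth) = \max_{i > n_c}\abs{1-\lambda_i}^{\nu_1+\nu_2}$. By \cref{eq:ordereigSouth} this maximum equals $\abs{1-\lambda_{n_c+1}}^{\nu_1+\nu_2}$. I expect the main obstacle to be the commutation of $D$ with the diagonal factors. It requires that equal eigenvalues are never split between the coarse and fine blocks, which is exactly what the assumed block structure of $D_{n_c}$ and $D_{n_f}$ guarantees.
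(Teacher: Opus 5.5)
Your proposal is correct and follows the paper's proof essentially step for step. It uses the same computation $\Pi_A(P_\#,R_\#)=V_r\bigl[\begin{smallmatrix} I_{n_c} & 0\\ 0 & 0\end{smallmatrix}\bigr]V_r^{-1}$, the same block-structure argument for $B$-orthogonality, diagonalization of all factors of $\Esouth$ in the basis $V_r$, and \cref{prop:B-normal_spectral_radius}. The only difference is cosmetic: you check $B$-normality by computing $\Esouth^+$ directly, while the paper notes that $V_rD^{1/2}$ is a $B$-unitary eigenvector matrix, and both routes rest on the same commutation of $D$ with the diagonal factor. One small correction to your closing remark: that commutation does not need equal eigenvalues to stay on one side of the $n_c/n_f$ cut. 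It only needs $D$ not to couple across the cut and to be subordinate to the eigenvalue blocks on each side, which is exactly the stated hypothesis.
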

\begin{proof}
    Let $M^{-1}A$ be $B$-normal with right and left generalized eigenvectors $V_r$ and $V_l$ such that \cref{eq:def right gen eig vec V_r} and \cref{eq:def left gen eig vec V_l} are satisfied, respectively. Let the eigenvalues $\lambda$ be ordered as in \cref{eq:ordereigSouth}, let $B= V_r^{-H}D^{-1}V_r^{-1}$ with $D$ as in \cref{eq:def D opti_B_norm_South} and $P_\#$ and $R_\#$ defined as above. We partition the diagonal matrix $D_A \in \Cnn$ from \cref{eq:left_right_eig A ortho} similar to $\Lambda$, that is
    \begin{equation}\label{eq:def D_A opti_Bnorm_Esouth}
        D_A = \m{D_{n_c}^{(A)} & 0 \\ 0 & D_{n_f}^{(A)}}
    \end{equation}
    with $D_{n_c}^{(A)} \in \Cncnc$ and $D_{n_f}^{(A)} \in \C^{n_f \times n_f}$. By the definitions of $P_\#$ and $R_\#$ we know that
    $P_\# = V_r \big[ \begin{smallmatrix} I_{n_c} \\ 0 \end{smallmatrix}\big] Z$ and $R_\# = V_l \big[ \begin{smallmatrix} I_{n_c} \\ 0 \end{smallmatrix}\big] S$ hold for some nonsingular matrices $Z, S \in \Cncnc$. It follows
    \begin{align}\label{eq:Pi PhashRhash diagonalizable}
        \Pi_A(P_\#,R_\#) &= V_r \m{I_{n_c} \\ 0} Z \left( S^H \m{I_{n_c} & 0} V_l^H A V_r \m{I_{n_c} \\ 0} Z\right)^{-1} S^H \m{I_{n_c} & 0} V_l^HA \nonumber
        \\ &= V_r \m{I_{n_c} \\ 0} \left( \m{I_{n_c} & 0} D_A \m{I_{n_c} \\ 0} \right)^{-1} \m{I_{n_c} & 0} D_AV_r^{-1} \nonumber 
        \\ &= V_r \m{I_{n_c} \\ 0} (D_{n_c}^{(A)})^{-1} \m{I_{n_c} & 0} D_A V_r^{-1} 
        \\ &= V_r \m{(D_{n_c}^{(A)})^{-1} & 0 \\ 0 & 0} D_A V_r^{-1}= V_r \m{I_{n_c} & 0 \\ 0 &0}V_r^{-1}. \nonumber
    \end{align}
    By \cref{lem:relation B-normal KrzSouWimAliBraKah2025}~(\labelcref{lem:enum:equiv cond Pi_A B-ortho}) and the structure of $B$ the projection $\Pi_A(P_\#,R_\#)$ is $B$-orthogonal. Since $M^{-1}A = V_r\Lambda V_r^{-1}$, the matrices $M^{-1}A$ and $\Pi_A(P_\#,R_\#)$ are simultaneously diagonalizable and therefore commute. Thus, we have
    \begin{align*}
        \Esouth(P_\#,R_\#) &= (I-M^{-1}A)^{\nu_2}(I-\Pi_A(P_\#, R_\#))(I-M^{-1}A)^{\nu_1}
        \\ &= (I-\Pi_A(P_\#, R_\#))(I-M^{-1}A)^{\nu_1+\nu_2}
        \\ &= V_r \left(I - \m{I_{n_c} & 0 \\ 0 & 0} \right)V_r^{-1}V_r\left( I-\Lambda\right)^{\nu_1+\nu_2} V_r^{-1} 
        \\ &= V_r \m{0 & 0 \\ 0 & I_{n_f} - \Lambda_{n_f}}^{\nu_1+\nu_2}V_r^{-1}
        \\ &= V_r D^{\frac12} \m{0 & 0 \\ 0 & I_{n_f} - \Lambda_{n_f}}^{\nu_1+\nu_2}D^{-\frac12}V_r^{-1}.
    \end{align*}
    Since $D$ is HPD, $D^{\frac12}$ exists and the matrix $V_r D^\frac12$ is $B$-unitary. Then the error operator $\Esouth(P_\#, R_\#)$ is $B$-unitarily diagonalizable and hence $B$-normal. With \cref{prop:B-normal_spectral_radius} and the ordering of the eigenvalues~\cref{eq:ordereigSouth} we obtain
    \begin{displaymath}
         \norm{\Esouth(P_\#,R_\#)}_B = \rho(\Esouth(P_\#,R_\#)) = \abs{1-\lambda_{n_c + 1}}^{\nu_1 + \nu_2}.
    \end{displaymath}
\end{proof}

\begin{remark}
    The result itself is independent of the smoothing assumption. However, to obtain a convergent two-grid method with $\norm{\Esouth(P_\#,R_\#)}_B < 1$ the smoothing assumption is required.
\end{remark}

In \cite{NabRoo2026b} we prove further characterizations of the $B$-norm of the error operator $\Esouth$.

The proof of \cref{thm:opti_Bnorm_South} shows that $V_rD^{\frac12}$ is $B$-unitary. Since $D$ is nonsingular, it holds $\calR(V_r) = \calR(V_rD^{\frac12})$ and $V_rD^\frac12$ is a $B$-unitary eigenvector matrix of $M^{-1}A$. With \cref{thm:charact_eigenvals_tildeMB} we infer that $\hatMinvB$ is also $B$-unitarily diagonalizable with eigenvector matrix $V_rD^\frac12$ and eigenvalues $1-\abs{1-\lambda}^2$. The block structure of $B$ shows that
\begin{displaymath}
    \calR(P_\#) = \calR(\m{v_1 &\cdots&v_{n_c} }) = \calR(\hat P)
\end{displaymath}
and with \cref{lem:range P kernel R B-orthogonal} we obtain
\begin{displaymath}
    \calN(R_\#^HA) = \calR(\m{v_{n_c+1} &\cdots & v_{n} }) = \calN(\hat R^HA).
\end{displaymath}
The $B$-orthogonality of $\Pi_A(\hat P, \hat R)$ and \cref{lem:equiv_cond Pi B-ortho} then yield
\begin{align*}
    \calR(\hat R) &= \calR(A^{-H}B\hat P) = \calR(A^{-H}BP_\#) = \calR \left(A^{-H}V_r^{-H}D^{-1}V_r^{-1}V_r \m{I_{n_c}\\0} \right)
    \\ &= \calR \left(A^{-H}V_r^{-H} \m{I_{n_c}\\0} \right) = \calR \left( V_l D_A \m{I_{n_c}\\0} \right) = \calR \left( V_l\m{I_{n_c}\\0} \right) = \calR(R_\#).
\end{align*}
Additionally, the sorting of the eigenvalues $\lambda$ of $M^{-1}A$ as in \cref{eq:ordereigSouth} implies the ordering
\begin{displaymath}
    \mu_1 = 1- \abs{1-\lambda_1}^2 \leq \mu_2 = 1- \abs{1-\lambda_2}^2 \leq \cdots \leq \mu_n = 1-\abs{1-\lambda_n}^2 
\end{displaymath}
of the eigenvalues $\mu$ of $\hatMinvB$. Therefore, for $(\nu_1,\nu_2) \in \{(0,1),(1,0),(1,1)\}$ and given the assumption of \cref{thm:opti_Bnorm_South}, all the assumptions of \cref{thm:opti_Bnorm_Eplus_BatNab} are satisfied and we obtain
\begin{displaymath}
    \norm{\Eplus(P_\#,R_\#)}_B^2 = (1-\mu_{n_c+1})^{\nu_1+\nu_2} = \abs{1-\lambda_{n_c+1}}^{2(\nu_1+\nu_2)} = \norm{\Esouth(P_\#,R_\#)}_B^2.
\end{displaymath}
In summary, for $(\nu_1,\nu_2) \in \{(0,1),(1,0),(1,1)\}$ and $M^{-1}A$ is $B$-normal, \cref{thm:opti_Bnorm_Eplus_BatNab,thm:opti_Bnorm_South} gives the same (up to a change of basis) restriction and interpolation operators and the same $B$-norm estimate. The theorems differ in the assumption on $M^{-1}A$ and in the number of smoothing steps. While \cref{thm:opti_Bnorm_Eplus_BatNab} is true for any $B$-norm and does not need assumptions on $M^{-1}A$, it requires $(\nu_1,\nu_2) \in \{(0,1),(1,0),(1,1)\}$. On the other hand, \cref{thm:opti_Bnorm_South} is valid for arbitrary numbers of pre- and post-smoothing steps but requires the $B$-normality of $M^{-1}A$ which restricts the set of all possible $B$-norms. 

The structure of the matrix $D$ and therefore also $B$ might seem complicated, but it can be explained using \cref{thm:B-normal_charact}~(\labelcref{thm:enum:diag and decomp B}). The interpolation and restriction $P_\#$ and $R_\#$ are defined independently of $B$ and by \cref{thm:opti_Bnorm_South} we know that $\Pi_A(P_\#, R_\#)$ is $B$-orthogonal for any matrix $B = V_r^{-H} D_1^{-1} V_r^{-1}$ where $D_1$ is an HPD block diagonal matrix with two HPD blocks, one of size $n_c$ and one of size $n_f$ (cf. \cref{lem:relation B-normal KrzSouWimAliBraKah2025}~(\labelcref{lem:enum:equiv cond Pi_A B-ortho})). This explains the HPD block $D_{n_c}$ in \cref{eq:def D opti_B_norm_South}. For the other block $D_{n_f}$ we apply \cref{thm:B-normal_charact}~(\labelcref{thm:enum:diag and decomp B}) to $M^{-1}A$ and obtain the structure $B = V_r^{-H}D_2^{-1} V_r^{-1}$ where $D_2$ is now an HPD block diagonal matrix with block sizes corresponding to those of $\Lambda$. Hence, the $B$-normality of $M^{-1}A$ explains the structure of the lower right block $D_{n_f}$ in \cref{eq:def D opti_B_norm_South}. In short, for $M^{-1}A$ to be $B$-normal and $\Pi_A$ to be $B$-orthogonal, the $B$-inner product must be of the form $B = W^{-H}DW^{-1}$ where $W$ are eigenvectors of $M^{-1}A$ \emph{and} $\Pi_A$ and $D$ is an HPD block diagonal matrix with block sizes corresponding to the block sizes of the eigenvalues of $M^{-1}A$ \emph{and} $\Pi_A$.

Similar to \cref{thm:optimalitytransferopEplus}, the question remains whether $(P_\#, R_\#)$ is the optimal combination of interpolation and restriction operator in the sense that they yield the smallest possible $B$-norm of $\Esouth$. This question was already answered in \cite[Theorem~3.5]{KrzSouWimAliBraKah2025} for diagonal HPD matrices $D$ (instead of those in \cref{eq:def D opti_B_norm_South}). Here we state their result and extend the class of $B$-inner products by allowing block HPD matrices $D$ as in \cref{eq:def D opti_B_norm_South}. The proof for diagonal $D$ can be found in \cite{KrzSouWimAliBraKah2025} and we give no proof here because it is analogous to the proof in \cite{KrzSouWimAliBraKah2025} and the proof of \cref{thm:optimalitytransferopEplus}. 

\begin{theorem}\label{thm:optimalitytransferopEsouth}
    Let the assumptions from \cref{thm:opti_Bnorm_South} be fulfilled, the smoothing assumption be satisfied and $I-M^{-1}A$ be nonsingular. Then it holds
    \begin{displaymath}
        \norm{\Esouth(P_\#,R_\#)}_B  = \abs{1- \lambda_{n_c+1}}^{\nu_1 + \nu_2} \leq \norm{\Esouth(P,R)}_B 
    \end{displaymath}
    for all $P, R \in \Cnnc$ such that $R^HAP$ is nonsingular.
\end{theorem}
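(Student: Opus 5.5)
The equality $\norm{\Esouth(P_\#,R_\#)}_B = \abs{1-\lambda_{n_c+1}}^{\nu_1+\nu_2}$ is already given by \cref{thm:opti_Bnorm_South}. What remains is the lower bound $\norm{\Esouth(P,R)}_B \geq \abs{1-\lambda_{n_c+1}}^{\nu_1+\nu_2}$ for arbitrary $P,R$ with $R^HAP$ nonsingular. I will follow the Courant--Fisher argument from the proof of \cref{thm:optimalitytransferopEplus}. The key simplification is that $B$-normality of $M^{-1}A$ lets the pre- and post-smoothers be diagonalized in the same $B$-unitary basis.

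\textbf{Step 1: a $B$-unitary eigenbasis.} Set $W := V_rD^{\frac12}$. As noted after \cref{thm:opti_Bnorm_South}, $W$ is a $B$-unitary eigenvector matrix of $M^{-1}A$, and $B = W^{-H}W^{-1}$. Then
\begin{displaymath}
I-M^{-1}A = W(I-\Lambda)W^{-1}, \qquad (I-M^{-1}A)^+ = W(I-\Lambda)^HW^{-1},
\end{displaymath}
because $D^{\frac12}$ commutes with $\Lambda$ (they have the same block structure). Since $I-M^{-1}A$ is assumed nonsingular, we can substitute $x=(I-M^{-1}A)^{\nu_1}y$. As $(I-\Pi_A)x=x$ for $x\in\calN(R^HA)$, restricting the supremum to that subspace gives
\begin{displaymath}
\norm{\Esouth(P,R)}_B^2 \geq \sup_{0\neq x\in \calN(R^HA)} \frac{\norm{(I-M^{-1}A)^{\nu_2}x}_B^2}{\norm{(I-M^{-1}A)^{-\nu_1}x}_B^2}.
\end{displaymath}

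\textbf{Step 2: reduce to Hermitian forms.} Using $B$-unitarity of $W$, the numerator is
\begin{displaymath}
\< x, ((I-M^{-1}A)^+)^{\nu_2}(I-M^{-1}A)^{\nu_2}x\>_B = x^HW^{-H}\abs{I-\Lambda}^{2\nu_2}W^{-1}x =: x^H\mathcal{A}x,
\end{displaymath}
and the denominator is $x^HW^{-H}\abs{I-\Lambda}^{-2\nu_1}W^{-1}x =: x^H\mathcal{C}x$. Here $\mathcal{A}$ is Hermitian and $\mathcal{C}$ is HPD, with $\mathcal{C}^{-1}\mathcal{A} = W\abs{I-\Lambda}^{2(\nu_1+\nu_2)}W^{-1}$. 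This is the step where $B$-normality is used: without it, the post-smoother $I-M^{-1}A$ and the pre-smoother would not share a $B$-orthonormal eigenbasis, and the quotient would not be a generalized Rayleigh quotient with known spectrum. The eigenvalues of $\mathcal{C}^{-1}\mathcal{A}$ are $\abs{1-\lambda_i}^{2(\nu_1+\nu_2)}$. By \cref{eq:ordereigSouth}, the $(n_c+1)$-st largest of them is $\abs{1-\lambda_{n_c+1}}^{2(\nu_1+\nu_2)}$.

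\textbf{Step 3: apply Courant--Fisher.} Since $R$ has full rank and $A$ is nonsingular, $\calN(R^HA)$ has dimension $n-n_c$. Apply \cref{thm:gen C-F-W} with the eigenvalues indexed increasingly. The $(n_c+1)$-st largest eigenvalue is the $k$-th smallest with $k = n-n_c$, so the relevant subspace dimension is $n-k+1 = n_c+1$, not $n-n_c$. This mismatch needs care: for a fixed subspace of dimension $n-n_c$, the quantity $\max_x$ of the Rayleigh quotient is bounded below by the $(n_c+1)$-st largest eigenvalue. Equivalently, taking the minimum over all subspaces of dimension $n-n_c$ yields exactly that eigenvalue, in the form of Courant--Fisher stated for largest eigenvalues. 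In the proof of \cref{thm:optimalitytransferopEplus} the same indexing was used in this way, so I would state it accordingly. This gives $\norm{\Esouth(P,R)}_B^2 \geq \abs{1-\lambda_{n_c+1}}^{2(\nu_1+\nu_2)}$, and taking square roots completes the proof.

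I expect the main obstacle to be the bookkeeping in Step 2: verifying $(I-M^{-1}A)^+ = W(I-\Lambda)^HW^{-1}$ via \cref{thm:B-normal_charact}~(\labelcref{thm:enum:eigenvector A Aplus}), and checking that the block (non-diagonal) structure of $D$ causes no trouble because $D$ commutes with $\Lambda$. The smoothing assumption is not needed for the inequality itself; it only ensures the bound is below one.
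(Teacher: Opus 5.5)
Your proposal is correct and follows the route the paper indicates. The paper omits this proof and refers to the argument of \cref{thm:optimalitytransferopEplus}: restrict the supremum to $\calN(R^HA)$, rewrite it as a generalized Rayleigh quotient, and apply Courant--Fisher. Your use of $B$-normality to obtain the $B$-unitary eigenbasis $W=V_rD^{\frac12}$, giving $\mathcal{C}^{-1}\mathcal{A}=W\abs{I-\Lambda}^{2(\nu_1+\nu_2)}W^{-1}$ for arbitrary $\nu_1,\nu_2$, is what makes that argument work for $\Esouth$. Your handling of the indexing in Step~3 is also right: the min--max over subspaces of dimension $n-n_c$ gives the $(n_c+1)$-st largest eigenvalue. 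This is how \cref{thm:gen C-F-W} is effectively used in the proof of \cref{thm:optimalitytransferopEplus}, despite the increasing ordering in its statement.
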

For $(\nu_1,\nu_2) = (1,0)$ the optimality of the transfer operators already follows from \cite[Theorem~4.6]{BatNab2025} by using the structure of the matrix $B$ defined by the HPD block diagonal matrix $D$ as in \cref{eq:def D opti_B_norm_South} and the $B$-normality of $M^{-1}A$ together with \cref{thm:charact_eigenvals_tildeMB}.

\subsection{Some remarks}
To emphasize the differences of the two error operators $\Eplus$ and $\Esouth$ and their respective assumptions for convergence, we consider some theoretical examples. These examples shall improve the understanding of the required assumptions and the different structures of the error operators. 

In the \cref{thm:opti_Bnorm_South,thm:optimalitytransferopEsouth} we considered certain $B$-norms with matrices $B$ that are constructed with block diagonal HPD matrices $D$ of the form \cref{eq:def D opti_B_norm_South}. In the underlying theory from \cite[Theorems~3.3,~3.5]{KrzSouWimAliBraKah2025} only diagonal HPD matrices $D$ were allowed. The following example shows a case where \cite[Theorem~3.3]{KrzSouWimAliBraKah2025} does not guarantee convergence, but \cref{thm:opti_Bnorm_South} does. 

\begin{example}
    We consider $n =3$ and $n_c = 1$. Let
    \begin{displaymath}
        A = \m{\frac78 & -\frac58 & \frac58 \\[1mm] -\frac58 & \frac78 & \frac58 \\[1mm] 0 &0 & \frac32} = \m{1&0&1 \\ 1&1&0 \\ 0&1&1} \m{\frac14 & & \\ & \frac32 & \\ & & \frac32}\m{1&0&1 \\ 1&1&0 \\ 0&1&1}^{-1}, M^{-1} = I_3,
    \end{displaymath}
    then $M^{-1}A = A$ is diagonalizable with eigenvalues $\lambda_1 = \frac14$ and $ \lambda_{2} = \lambda_3 = \frac32$. The (generalized) right eigenvalues $V_r$ are given by 
    \begin{displaymath}
        V_r = \m{1&0&1 \\ 1&1&0 \\ 0&1&1}.
    \end{displaymath}
    The (generalized) left eigenvectors are the (generalized) right eigenvectors of $A^H = A^T$ which are given by 
    \begin{displaymath}
        V_l = \m{-1&-1&0 \\ -1&1&0 \\ 1&0&1}.
    \end{displaymath}
    Let $B$ be given by 
    \begin{displaymath}
        B = \m{\frac58&-\frac38&-\frac18 \\[1mm] -\frac38&\frac58&-\frac18 \\[1mm] -\frac18&-\frac18&\frac{7}{24}}.
    \end{displaymath}
    Then $B$ can be written as $B = V_r^{-H} D^{-1} V_r^{-1} = (V_rDV_r^H)^{-1}$ for 
    \begin{displaymath}
        D = \m{2&0&0 \\ 0&2&1 \\ 0&1&2}
    \end{displaymath}
    which is an HPD block diagonal matrix with block sizes corresponding to those of the eigenvalue matrix of $A$, that is a $2 \times 2$ block corresponding to the double eigenvalue $\frac32$ and a $1 \times 1$ block corresponding to the eigenvalue $\frac14$. By construction, the matrix $M^{-1}A$ is $B$-normal (a calculation shows that it even is $B$-orthogonal). We define
    \begin{displaymath}
        P_\# = \m{1\\1\\0} \quad \text{ and } \quad R_\# = \m{-1\\-1\\1}
    \end{displaymath}
    and obtain
    \begin{displaymath}
        \Pi_A(P_\#, R_\#) = \m{\frac12 &\frac12&-\frac12 \\[1mm] \frac12 & \frac12 & -\frac12 \\[1mm] 0 & 0 &0}
    \end{displaymath}
    which is $B$-orthogonal since $\Pi_A(P_\#, R_\#)^+ = \Pi_A(P_\#, R_\#)$. The eigenvalues $\lambda$ are ordered so that $\abs{1-\lambda_1} \geq \abs{1-\lambda_2} \geq \abs{1-\lambda_3}$. Then we can apply \cref{thm:opti_Bnorm_South} and obtain 
    \begin{displaymath}
        \rho(E^{\nu_1, \nu_2}(P_\#, R_\#) = \norm{E^{\nu_1, \nu_2}(P_\#, R_\#)}_B = \abs{1-\lambda_2}^{\nu_1 +\nu_2} = \frac{1}{2^{\nu_1 +\nu_2}}.
    \end{displaymath}
    
    This result cannot be obtained by \cite[Theorem~3.3]{KrzSouWimAliBraKah2025} because $D$ is not a diagonal matrix but has a block diagonal form. 
\end{example}

The next example illustrates a case where $M^{-1}A$ is not $B$-normal for any HPD matrix $B$ and therefore our convergence analysis only guarantees the convergence of $\Eplus$ with $(\nu_1,\nu_2) \in \{(0,1),(1,0),(1,1)\}$ but not of $\Esouth$. The example uses a matrix $M^{-1}A$ which is not diagonalizable and thus cannot be $B$-normal. 

\begin{example}
    We consider $n = 3$ and $n_c = 1$ again. Let $(\nu_1, \nu_2) = (1,1)$ and
    \begin{displaymath}
        A = \m{\frac12 &0 & 0\\ 0& \frac12 & 1 \\ 0& 0& \frac12}, \quad M^{-1} = I_3 \quad \text{and} \quad B = \m{1 & 0 & 0 \\ 0&1&-2\\0&-2&6}
    \end{displaymath}
    then $M^{-1}A = A$ is not diagonalizable and $B$ is HPD. A calculation shows
    \begin{displaymath}
        \widehat M^{-1} = M^{-1}AB^{-1} + B^{-1}A^HM^{-H}- M^{-1}AB^{-1}A^HM^{-H} = \m{\frac34 &0&0 \\ 0 &\frac{11}{4}&1 \\ 0 &1& \frac{3}{8}} 
    \end{displaymath}
    is HPD and thus the smoothing assumption is satisfied, that is $\norm{I-M^{-1}A}_B < 1$. Furthermore, it holds
    \begin{align*}
        \widehat M^{-1}B &= \m{\frac34 & 0 &0 \\[1mm] 0&\frac34 & \frac12 \\[1mm] 0&\frac14&\frac14}
        \\ &= \m{1&0&0\\0&-\sqrt{3}+1&\sqrt{3}+1\\0&1&1} \m{\frac34 & & \\ & \frac{-\sqrt{3}+2}{4} & \\ & & \frac{\sqrt{3}+2}{4}} \m{1&0&0\\0&-\sqrt{3}+1&\sqrt{3}+1\\0&1&1}^{-1}
    \end{align*}
    thus $\widehat M^{-1}B$ has the sorted eigenvalues $\mu_1 = \frac{-\sqrt{3}+2}{4} \leq \mu_2 = \frac34 \leq \mu_3 = \frac{\sqrt{3}+2}{4}$. We choose the interpolation $P = \Big[ \begin{smallmatrix} 0 \\ -\sqrt{3}+1\\1 \end{smallmatrix}\Big]$ as eigenvector of $\hatMinvB$ corresponding to $\mu_1$ and the corresponding optimal restriction $R_* = A^{-H}BP = \Big[ \begin{smallmatrix} 0 \\ -2\sqrt{3}-2\\8\sqrt3 +12 \end{smallmatrix}\Big]$. Then by construction of $P$ and $R_*$ we know that $\Pi_A(P,R_*)$ is $B$-orthogonal. A computation also shows
    \begin{displaymath}
        \Pi_A(P,R_*) = \m{0 & 0 & 0\\[1mm] 0&\frac{-\sqrt3 +3}{6} & \frac{-\sqrt3}{3} \\[1mm] 0 & \frac{-\sqrt3}{6} & \frac{\sqrt{3}+3}{6}} 
        \quad \text{ and } \quad
        E_+^{1,1} = \m{\frac14 & 0 & 0 \\[1mm] 0& \frac{\sqrt3 -1}{8} & \frac{-2\sqrt3 +3}{4} \\[1mm] 0& \frac{2\sqrt3 -3}{24} & \frac{-3\sqrt3 +5}{8}}
    \end{displaymath}
    with $\norm{E_+^{1,1}}_B = 1-\mu_2 = \frac14$ by \cref{thm:opti_Bnorm_Eplus_BatNab}. However, since $M^{-1}A$ is not $B$-normal, theory does not guarantee the convergence of $E^{1,1}$. However, it also holds
    \begin{displaymath}
        E^{1,1} = \m{\frac14 & 0 & 0 \\[1mm] 0 & \frac{-\sqrt3 +3}{24} & \frac{\sqrt3 -2}{4} \\[1mm] 0 & \frac{\sqrt3}{24} & \frac{-\sqrt3 +1}{8}} \quad \text{with} \quad \norm{E^{1,1}}_B = \frac{1}{4}.
    \end{displaymath}
    It is interesting that both of the operators have the same norm.
\end{example}

The example demonstrated that the $B$-normality of $M^{-1}A$ is not necessary for the convergence of $\Esouth$, however the following example shows that the operator requires at least some additional assumption for convergence. 

\begin{example}
    We consider $n=3$ and $n_c = 1$ and $(\nu_1, \nu_2) = (1,1)$. Let
    \begin{displaymath}
        A = \m{\frac14 & 0 & \frac1{12} \\[1mm] 0 & \frac12 & 0 \\[1mm] 0& 0& \frac14} = \m{1 & 0 &1\\0&1&0\\0&0&1} \m{\frac14 & 0&0\\ 0&\frac12&0\\0&0&\frac13}\m{1 & 0 &1\\0&1&0\\0&0&1}^{-1} = V_r\Lambda V_r^{-1}
    \end{displaymath}
    and $M=I_3$. Further let
    \begin{displaymath}
        B = \m{4 & 0&0\\0&2&1\\0&1&1}.
    \end{displaymath}
    Then $B$ is HPD and $M^{-1}A = A$ is diagonalizable. However, $M^{-1}A$ is not $B$-normal because $B$ does not have the required structure as in \cref{thm:B-normal_charact}~(\labelcref{thm:enum:diag and decomp B}). We have
    \begin{displaymath}
        \hatMinvB = \m{\frac{55}{144} &\frac{1}{36} & \frac{5}{72} \\[1mm] -\frac16 & \frac56 & \frac1{12} \\[1mm] \frac49 & -\frac29 & \frac49}
    \end{displaymath}
    which has eigenvalues in the interval $(0,1]$. By \cref{thm:smoothing_assumpt_BatNab} the smoothing assumption is then satisfied. Choosing $\hat P$ and $\hat R$ as in \cref{thm:opti_Bnorm_Eplus_BatNab}, the theorem also implies the convergence of $E_+^{1,1}$, that is $\norm{E_+^{1,1}(\hat P, \hat R)}_B < 1$. But the error operator $E^{1,1}$ does not convergence. The left eigenvectors of $M^{-1}A$ are given by
    \begin{displaymath}
        V_l = \m{1 &0&0\\0&1&0\\-1&0&1}.
    \end{displaymath}
    We set $P_\# = \Big[\begin{smallmatrix} 1 \\0\\0 \end{smallmatrix} \Big]$ and $R_\# = \Big[\begin{smallmatrix} 1 \\0\\-1 \end{smallmatrix} \Big] $ and obtain
    \begin{displaymath}
        E^{1,1}(P_\#, R_\#) = \m{\frac34 & 0 &-\frac1{12} \\[1mm] 0&\frac12&0\\[1mm] 0&0&\frac23} \m{0&0&1\\0&1&0\\0&0&1} \m{\frac34 & 0 &-\frac1{12} \\[1mm] 0&\frac12&0\\[1mm] 0&0&\frac23} = \m{0&0&\frac49 \\[1mm] 0&\frac14 &0\\[1mm] 0&0&\frac49}
    \end{displaymath}
    with 
    \begin{displaymath}
        \norm{E^{1,1}(P_\#, R_\#)}_B = \sqrt{\frac{\sqrt{1294465}+1217}{1296}} \approx 1.34794 > 1.
    \end{displaymath}
    Therefore, $E^{1,1}$ does not convergence. This is due to the fact that $\norm{\Pi_A(P_\#, R_\#)}_B = 3 > 1$ and $\Pi_A(P_\#, R_\#)$ is not $B$-orthogonal for this choice of interpolation and restriction. Increasing the $(1,1)$-entry of $B$ further increases the $B$-norm of $\Pi_A(P_\#, R_\#)$.
\end{example}

The last two examples show that additional assumptions are necessary to obtain the convergence of $E^{1,1}$. The $B$-normality of $M^{-1}A$ is not a necessary assumption, but it is a sufficient condition for the convergence. 

\section{Conclusions}
In this paper, we present a unified framework for the convergence analysis of algebraic two-grid methods in arbitrary norms given by HPD matrices $B$ for nonsymmetric and indefinite matrices. Our framework includes the approaches from \cite{BatNab2025, KrzSouWimAliBraKah2025, AliBraKahKrzSchSou2025} and generalizes the approach in \cite{BraCaoKahFalHu2018} for HPD matrices. We introduce the error propagation operator $\Eplus$ that acts as the natural generalization of the error operator in the HPD case. We also present the error operator $\Esouth$ which has a simpler structure than $\Eplus$ and was already studied in \cite{KrzSouWimAliBraKah2025}. For both operators, we prove the convergence for certain norms, for $\Eplus$ in arbitrary norms and for $\Esouth$ for a special class of norms defined by a diagonalization of the smoothing matrix $M^{-1} $ multiplied with the system matrix $A$. We state the optimal compatible transfer operators leading to the minimal error norms and discuss the similarities of these transfer operators for the two error operators. Our framework is continued in \cite{NabRoo2026b} where we further characterize the convergence rate of the two error operators $\Eplus$ and $\Esouth$. Moreover, we show that an increase of the number of coarse variables leads to a decrease of the error norm and thus faster convergence. Finally, in \cite{NabRoo2026b} we extend our framework to V-cycle multigrid methods and show that McCormick's V-cycle bound also holds true for nonsymmetric AMG methods.

\bibliographystyle{siamplain}
\bibliography{literature_paper}

\end{document}